\newcommand{\ubar}[1]{\underaccent{\bar}{#1}}
\def\prob {{\sf Pr}}
\newcommand{\cA}{{\mathfrak A}}
\newcommand{\cS}{{\mathfrak S}}
\newcommand{\cT}{{\mathfrak T}}
\newcommand{\cM}{{\mathfrak M}}
\newcommand{\cV}{{\mathfrak V}}
\newcommand{\cW}{{\mathfrak W}}
\newcommand{\cQ}{{\mathfrak Q}}
\newcommand{\cm}{{\mathfrak m}}
\newtheorem{theorem}{Theorem}[section]
\newtheorem{lemma}{Lemma}[section]
\newtheorem{example}{Example}%[section]
\newtheorem{rem}{Remark}[section]
\newtheorem{cor}{Corollary}[section]
\newcommand{\N}{{\cal N}}
\newcommand{\T}{{\cal T}}
\def\Min{\mathop{\rm Min}}
\newcommand{\var}{{\rm Var}}
\newcommand{\cov}{{\rm Cov}}
\newcommand{\V}{{\cal V}}
\newcommand{\Z}{{\cal Z}}
\newcommand{\F}{{\cal F}}
\newcommand{\X}{{\cal X}}
\newcommand{\LL}{{\cal L}}
\newcommand{\Y}{{\cal Y}}
\newcommand{\hF}{\hat{F}}
\newcommand{\R}{{\cal R}}
\def\bcswitch{\left\{\renewcommand{\arraystretch}{1.2}\begin{array}{c@{,~}c}}
\def\ecswitch{\end{array}\right.}
\def\bcswitchs{\left\{\renewcommand{\arraystretch}{1.2}\begin{array}{c}}
\def\ecswitchs{\end{array}\right.}
\newcommand{\fb}{\rule[-2pt]{4pt}{8pt}}
\newcommand{\cdis}{\stackrel{{\cal D}}{\to}}
\newcommand{\bb}{\mbox{\tiny\boldmath$b$}}
\newcommand{\lan}{\langle}
\newcommand{\ran}{\rangle}
\def\w{\omega}
\def\O{\Omega}
\def\vv{\vartheta}
\def\avr{{\sf AVaR}}
\def\hF{\widehat{F}}
\def\bbr{{\Bbb{R}}} %real numbers
\def\bbe{{\Bbb{E}}} %expectation
\def\bbp{{\Bbb{P}}}
\def\bby{{\Bbb{Y}}}
\def\bbt{{\Bbb{T}}}
\def\bb{{\Bbb{B}}}
\newcommand{\ind}{{\bf 1}} %indicator function
\newtheorem{remark}{Remark}
\begin{document}

%\begin{titlepage}

\title{\bf Statistical inference and hypotheses testing of risk averse stochastic programs}

\author{
{\bf Vincent  Guigues}\thanks{Research of this author  was partly supported by an FGV grant, CNPq grant 307287/2013-0,
FAPERJ grants E-26/110.313/2014 and E-26/201.599/2014.}\\
School  of Applied Mathematics\\
Funda\c{c}\~ao Getulio Vargas\\
190 Praia de Botafogo, Rio de Janeiro, Brazil,\\
{\tt vguigues@fgv.br}
\and
{\bf Volker Kr\"atschmer}\\Faculty of Mathematics\\
University of Duisburg-Essen\\
D-45127 Essen, Germany\\
{\tt volker.kraetschmer@uni-due.de}
\and
{\bf Alexander Shapiro}\thanks{Research of this author  was partly supported by  DARPA EQUiPS program, grant SNL 014150709.}\\
School of Industrial and Systems Engineering\\
Georgia Institute of Technology\\
Atlanta, GA 30332-0205, USA,\\ {\tt ashapiro@isye.gatech.edu}\\
}

\date{}

\maketitle

\begin{abstract}
We study   statistical properties of
the optimal value and optimal solutions of the Sample Average Approximation of  risk averse stochastic  problems.
 Central Limit Theorem type results are derived  for the optimal value and optimal solutions
when the stochastic program is expressed in terms of a law invariant coherent risk measure.
The obtained results are applied to  hypotheses testing problems aiming at comparing the optimal values
of several  risk averse convex stochastic programs on the basis of samples of the underlying random vectors.
We also consider  non-asymptotic tests based on
confidence intervals on the optimal values of the stochastic programs
obtained using the Stochastic Mirror Descent algorithm.
Numerical simulations show how to use our developments
to choose among different distributions and show the
superiority of the asymptotic tests on a class of risk averse stochastic programs.
\end{abstract}

\emph{Keywords}: Stochastic optimization, statistical inference, hypotheses testing, coherent risk measure, Central Limit Theorem, Sample Average Approximation.\\

\emph{AMS subject classifications:} 90C15, 90C90, 90C30.

%\tableofcontents

\setcounter{equation}{0}
\section{Introduction}
\label{sec:introd}

Consider the following risk averse stochastic program
\begin{equation}\label{ropt-1}
%\vv_* =\left\{
%\begin{array}{l}
\min_{x \in \X} \big\{ g(x):=\R(G_{x})\big\}.
%\end{array}
%\right.
 \end{equation}
Here  $\X$ is a nonempty compact subset of $\bbr^m$, $G_{x}$ is a random variable depending on
$x\in \X$ and $\R$ is a risk measure. We assume that $G_{x}$ is given in the
form  $G_{x}(\w)=G(x,\xi (\w))$, where $G:\X\times\bbr^d\to \bbr$ and  $\xi:\O\to\bbr^d$  is a random vector defined on a probability space $(\O,\F,P)$ whose distribution is supported
on set $\Xi\subset \bbr^d$. We also assume that risk measure $\R$ is {\em law invariant} (we will give precise definitions in  Section \ref{sec:disc}).

Let $\xi_j=\xi_j(\w)$, $j=1,...,N$, be an i.i.d sample of the random vector $\xi$  defined on the same probability space. Then the respective sample estimate of $g(x)$, denoted $\hat{g}_N(x)$, is obtained by replacing the ``true" distribution of the random vector $\xi$ with its empirical estimate. Consequently the true optimization problem (\ref{ropt-1}) is approximated by the problem
\begin{equation}\label{ropt-2}
\Min_{x\in \X}  \hat{g}_N(x),
\end{equation}
referred to as the Sample Average Approximation  (SAA) problem.  Note that $\hat{g}_N(x)=\hat{g}_N(x,\w)$ is a random function, sometimes  we suppress dependence on $\w$   in the notation. In particular if $\R$ is the
expectation operator, i.e., $g(x)=\bbe[G_x]$, then $\hat{g}_N(x)=N^{-1}\sum_{j=1}^N G(x,\xi_j)$.

 We denote by $\vv_*$ and $\hat{\vv}_N$ the optimal values of problems (\ref{ropt-1}) and (\ref{ropt-2}), respectively, and study statistical properties of $\hat{\vv}_N$. 
The random sample can be given by collected data or can be generated by Monte Carlo sampling techniques in the goal of solving the true problem by the SAA method.
Although conceptually different, both situations lead to the same statistical inference.

 The statistical analysis allows us to address the following question of asymptotic tests of hypotheses. Suppose that we are given
$K \geq 2$ optimization problems of the form \eqref{ropt-1}
with $\xi$, $G$, and $\X$ respectively replaced by $\xi^i$, $G_{i}$, and $\X_i$ for problem $i\in \{1,...,K\}$.
On the basis of samples $\xi_1^i,\ldots,\xi_N^i,$ of size $N$, of
$\xi^i, i=1,\ldots,K$, and
denoting by $\vv_*^i$ the optimal value of problem $i$,
we study the statistical tests:
\begin{equation} \label{deftest1}
\begin{array}{lll}
(a) & H_0 :\; \vv_{*}^1= \vv_{*}^2 = \ldots = \vv_{*}^K ,\\
(b) &H_0 :\; \vv_{*}^i \leq  \vv_{*}^j  \mbox{ for }1 \leq j \neq i \leq K ,\\
(c)& H_0 :\; \vv_{*}^1 \leq  \vv_{*}^2  \leq  \ldots \leq  \vv_{*}^K ,
\end{array}
\end{equation}
against the respective unrestricted alternatives.
As a special case, if the feasibility sets of the $K$ optimizations problems are singletons, say $\{x_*^i \}$ for problem $i$, the above
tests aim at comparing the risks $\R(G_{x_*^1}),
\ldots, \R(G_{x_*^K})$.
These tests are useful when we want to choose among $K$ candidate solutions $x_*^{1}, \ldots, x_*^{K}$ of problem
\eqref{ropt-1} for  the one with the smallest risk measure value, using risk measure $\mathcal{R}$ to rank the distributions $G_{x_*^{i}}, i=1,\ldots,K$.

Setting $\theta:=(\vv_{*}^1, \ldots, \vv_{*}^K)$, we also consider the following  extension of tests \eqref{deftest1}(a),(b),(c):
\begin{equation} \label{testcontregalite}
H_0:\theta \in \Theta_0 \mbox{ against }{H_1}:\theta \in \bbr^K,
\end{equation}
with $\Theta_0\subset \bbr^K$ being  a linear space  or a convex cone, as well as tests on the optimal value $\vv_*$ of \eqref{ropt-1} of the form
\begin{equation}\label{deftest2}
\begin{array}{lll}
(a) & H_0:\;\vv_* =\rho_0  \hspace*{-0.3cm}  & \mbox{ against }{H_1}: \;\vv_* \neq \rho_0,\\
(b) & H_0:\; \vv_* \leq \rho_0 \hspace*{-0.3cm}  & \mbox{ against }{H_1}: \;\vv_* > \rho_0 .
\end{array}
\end{equation}
Tests \eqref{deftest1} and \eqref{deftest2} will also be studied in a nonasymptotic setting.

Finally, numerical simulations illustrate our results: we show how to use our developments to
choose, using tests \eqref{deftest1}, among different distributions. We also use these tests to compare the optimal
value of several risk averse stochastic programs. It is shown that the Normal  (Gaussian) distribution  approximates well
the distribution of $\hat{\vv}_N$ already for $N=20$ and problem sizes up to $n=10\,000$,
and that the asymptotic tests yield much smaller type II errors than the considered  nonasymptotic   tests for small  to moderate sample size ($N$ up to $10^5$) and problem size ($n$ up to 500).

We use the following notation throughout the paper.  By $F_Z(z):=P(Z\le z)$ we denote
 the cumulative distribution function (cdf) of a random variable $Z:\O\to\bbr$. By $F^{-1}(\alpha)=\inf\{t:F(t)\ge \alpha\}$ we denote the left-side $\alpha$-quantile of the cdf $F$. By $\cQ_F(\alpha)$ we denote the interval of $\alpha$-quantiles of cdf $F$, i.e.,
 \begin{equation}\label{quan}
  \cQ_F(\alpha)=[a,b],\;where \;a:=F^{-1}(\alpha),\;b:= \sup\{t:F(t)\le \alpha\}.
 \end{equation}
 By $\ind_A(\cdot)$ we denote the indicator function of set $A$. We consider space  $\Z:=L_p(\O,\F,P)$, $p\in [1,\infty)$, of random variables $Z:\O\to\bbr$ having finite $p$-th order moments. The dual of space $\Z$ is the space $\Z^*=L_q(\O,\F,P)$, where $q\in (1,\infty]$ is such that  $1/p+1/q=1$. For $Z\in \Z$ and $\zeta\in \Z^*$ their scalar product is defined as the integral
 $\lan \zeta,Z\ran=\int_\O \zeta(\w) Z(\w)dP(\w)$.
The notation $Z \succeq Z'$ means that $Z(\w)\geq Z'(\w)$ for a.e. $\w\in \O$.
We denote by $C[a,b]$ the space of continuous functions $\psi:[a,b]\to \bbr$ equipped with the norm $\|\psi\|_\infty:=\sup_{t\in [a,b]}|\psi(t)|$.
 It is said that functions $h_k:\bbr^n\to\bbr$ converge  to $h$ uniformly on $\bbr^n$ if $\sup_{x\in \bbr^n}|h_k(x) -h(x) | \to 0$ as $k\to\infty$.
%For a function $f:\bbr\to\bbr$ we denote by
%\[
%f^+(x)=\lim_{t\downarrow 0}\frac{f(x+t)-f(x)}{t}
%\]
% its right side derivative.

\setcounter{equation}{0}
\section{Preliminary discussion}
\label{sec:disc}

Risk measure  $\R:\Z\to\bbr$ is a   functional assigning to a random variable $Z\in \Z$ real value $\R(Z)$. Note that we consider here real valued risk measures, i.e., we do not allow $\R(Z)$ to have an infinite value.
In the influential paper of Artzner et al \cite{ADEH99} it was suggested that a ``good" risk measure should satisfy
 the following conditions (axioms).
\begin{itemize}
\item[(i)]
 {\sl Monotonicity:} If $Z,Z'\in\Z$ and  $Z \succeq Z'$,
then $\R(Z)\geq \R(Z')$.
\item [(ii)]
{\sl Convexity:}
\[
\R(t Z  + (1-t)Z') \le t \R(Z) +
(1-t)\R(Z')
\]
\
 for all $Z,Z'\in\Z$ and all $t\in [0,1]$.
\item[(iii)]  {\sl Translation Equivariance:} If $a\in \bbr$ and $Z\in\Z$, then $\R(Z+a)=\R(Z)+a$.
\item [(iv)]  {\sl Positive Homogeneity:} If $t\ge 0$ and $Z\in \Z$, then
$\R(t Z)=t\R(Z)$.
\end{itemize}

Risk measures $\R$ satisfying the above axioms (i)-(iv)  were  called {\em coherent} in  \cite{ADEH99}. If a risk measure satisfies axioms (i)-(iii), but not necessarily (iv),  it is called {\em convex} (cf., \cite{fol02}).
We assume  that $\R$ is {\em law invariant}. That is,  $\R(Z)$ depends only on the distribution of $Z$, i.e., if $Z,Z'\in \Z$ have the same cumulative distribution function then $\R(Z)=\R(Z')$.
We also  assume that the probability space $(\O,\F,P)$ is {\em nonatomic}.

 Since  a law invariant   risk measure   $\R$  can be considered as a function of its cdf    $F(\cdot)=F_Z(\cdot)$,   we also  write  $\R(F)$ to denote the corresponding value  $\R(Z)$.
Let $Z_1,...,Z_N$ be an i.i.d sample of $Z$ and $\hF_N=N^{-1}\sum_{j=1}^N \ind_{[Z_j,\infty)}$ be the corresponding empirical estimate of the cdf $F$.  By replacing $F$ with its empirical estimate $\hF_N$,  we obtain the  estimate $\R(\hF_N)$ to which we refer  as the {\em sample} or {\em empirical} estimate of $\R(F)$.
We assume that for every $x\in \X$, the random variable $G_x$ belongs to the space $\Z$, and hence $g(x)=\R(G_x)$ is well defined for every $x\in \X$.   Let $F_x$ be the cdf of random variable $G_x$, $x\in \X$, and $\hF_{x,N}$ be  the empirical cdf associated with the sample  $G(x,\xi_1),...,G(x,\xi_N)$.
Then we can write  $g(x)=\R(F_x)$
and $\hat{g}_N(x) =\R (\hF_{x,N})$.

 We have the following result about the convergence of the optimal value and optimal solutions of the SAA problem (\ref{ropt-2}) to their counterparts of the ``true" problem (\ref{ropt-1}) (cf., \cite[Theorem 3.3]{sha13}).
\begin{theorem}
 \label{th-3.3}
 Let  $\R:\Z\to\bbr$ be a law invariant   convex risk measure. Suppose that  the set $\X$ is nonempty and compact and the following conditions hold: {\rm (i)}
 the function $G_x(\w)$ is random lower semicontinuous, i.e., the epigraphical  multifunction $\w\mapsto \{(x,t)\in \bbr^{n+1}:G_x(\w)\le t\}$   is closed valued and measurable,
 {\rm (ii)}  for every $\bar{x}\in \bbr^n$ there is a neighborhood $\V_{\bar{x}}$ of $\bar{x}$  and a function $h\in \Z$ such that
    $G_x(\cdot)\geq h(\cdot)$ for all $x\in \V_{\bar{x}}$.

 Then the optimal value $\hat{\vv}_N$ of    problem {\rm (\ref{ropt-2})} converges w.p.1 to the optimal value $\vv_*$ of the ``true" problem {\rm (\ref{ropt-1})}, and the  distance from an optimal solution $\hat{x}_N$ of {\rm (\ref{ropt-2})}  to the set of optimal solutions of {\rm (\ref{ropt-1})} converges w.p.1 to zero as $N\to +\infty$.
  \end{theorem}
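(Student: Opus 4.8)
The plan is to derive both conclusions from a single structural fact: that the random functions $\hat{g}_N$ \emph{epiconverge} to $g$ on $\X$ w.p.1. Granting this, the standard consistency theorem for epiconvergent lower semicontinuous functions on a compact set yields at once that $\hat{\vv}_N=\inf_{\X}\hat{g}_N\to\inf_{\X}g=\vv_*$ and that every cluster point of a sequence $\hat{x}_N$ of minimizers lies in the nonempty compact solution set $S_*$ of \eqref{ropt-1}, i.e.\ $\dist(\hat{x}_N,S_*)\to 0$, w.p.1. The whole argument therefore reduces to (a) lower semicontinuity of $g$, and (b) the two defining inequalities of epiconvergence. The analytic engine throughout is that a finite-valued law invariant convex risk measure $\R$ on $\Z=L_p(\O,\F,P)$, $p\in[1,\infty)$, is continuous, is continuous from below (if $Z_k\uparrow Z$ in $\Z$ then $\R(Z_k)\uparrow\R(Z)$), and, being monotone and law invariant, respects first-order stochastic dominance.

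First I would prove that $g$ is lower semicontinuous. Fix $\bar{x}$, a sequence $x_k\to\bar{x}$, and the neighborhood $\V_{\bar{x}}$ and minorant $h\in\Z$ from condition (ii). For $k$ large, $Y_k:=\inf_{j\ge k}G_{x_j}$ satisfies $h\preceq Y_k\preceq G_{x_k}$, so $Y_k\in\Z$ and $Y_k\uparrow Y:=\liminf_j G_{x_j}$, while random lower semicontinuity (i) gives $Y\succeq G_{\bar{x}}$ a.e. Monotonicity gives $\R(Y_k)\le\R(G_{x_j})$ for all $j\ge k$, hence $\R(Y_k)\le\liminf_j g(x_j)$; letting $k\to\infty$ and invoking continuity from below (for the $[-\infty,+\infty]$-extension of $\R$, should $Y\notin\Z$) together with $Y\succeq G_{\bar{x}}$ yields $g(\bar{x})=\R(G_{\bar{x}})\le\R(Y)=\lim_k\R(Y_k)\le\liminf_j g(x_j)$.

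The heart of the proof is the epiconvergence of $\hat{g}_N$. The recovery ($\limsup$) inequality follows from the pointwise law of large numbers for the risk functional: for fixed $x$ the empirical cdf $\hF_{x,N}$ of $G(x,\xi_1),\dots,G(x,\xi_N)$ converges to $F_x$ w.p.1, whence, by continuity of $\R$ in the appropriate topology on distributions, $\hat{g}_N(x)=\R(\hF_{x,N})\to\R(F_x)=g(x)$ w.p.1, so the constant sequence $x_N\equiv x$ recovers $g(x)$. For the $\liminf$ inequality I would use monotonicity to decouple the two limits. Fix $\bar{x}$, a neighborhood $\V\subseteq\V_{\bar{x}}$, and put $\underline{G}_\V:=\inf_{x\in\V}G_x$, so that $h\preceq\underline{G}_\V\preceq G_{\bar{x}}$ and $\underline{G}_\V\in\Z$. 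For any $x_N\to\bar{x}$ eventually in $\V$ we have $G(x_N,\xi_j)\ge\underline{G}_\V(\xi_j)$ for every $j$, which is a pointwise coupling exhibiting first-order stochastic dominance of the law of $\hF_{x_N,N}$ over that of the empirical cdf $\widehat{H}_{\V,N}$ of $\underline{G}_\V(\xi_1),\dots,\underline{G}_\V(\xi_N)$; hence $\hat{g}_N(x_N)\ge\R(\widehat{H}_{\V,N})$. The same pointwise law of large numbers, applied to $\underline{G}_\V$, then gives $\liminf_N\hat{g}_N(x_N)\ge\R(H_\V)$, where $H_\V$ is the cdf of $\underline{G}_\V$. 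Finally, as $\V\downarrow\{\bar{x}\}$, lower semicontinuity (i) forces $\underline{G}_\V\uparrow G_{\bar{x}}$, so continuity from below gives $\sup_\V\R(H_\V)=g(\bar{x})$ and therefore $\liminf_N\hat{g}_N(x_N)\ge g(\bar{x})$; a single exceptional null set is obtained by intersecting the ones attached to a countable base of neighborhoods $\V$.

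I expect the main obstacle to be the pointwise law of large numbers $\R(\hF_{x,N})\to\R(F_x)$: in contrast to the expectation case it is not a consequence of the classical strong law, and establishing it requires continuity of the law invariant convex risk measure $\R$ on $L_p$ with respect to a topology on distributions in which the empirical laws genuinely converge w.p.1 (a $\psi$-weak or Wasserstein-type topology, controlling the $p$-th moments via the minorant and suitable integrability). The remaining delicate points are comparatively routine: measurability of the infima $\underline{G}_\V$ and $Y_k$ and their membership in $\Z$ (secured by (i) together with the minorant in (ii)), and the harmless passage to the extended risk measure used to justify the monotone-convergence steps when an a.e.\ limit fails to lie in $\Z$.
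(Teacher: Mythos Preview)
The paper does not supply its own proof of this theorem; it merely cites \cite[Theorem 3.3]{sha13}. Your epiconvergence argument is correct and is, in essence, the proof given in that reference: lower semicontinuity of $g$ via the monotone sequence $Y_k=\inf_{j\ge k}G_{x_j}$, the $\limsup$ (recovery) inequality from the pointwise strong law $\R(\hF_{x,N})\to\R(F_x)$, and the $\liminf$ inequality via the neighborhood minorant $\underline{G}_\V=\inf_{x\in\V}G_x$ combined with monotonicity/first-order stochastic dominance and continuity from below. You have also correctly isolated the one genuinely nontrivial step, namely the pointwise law of large numbers for the risk functional; in \cite{sha13} this is obtained from norm continuity of a real-valued convex risk measure on $L_p$ together with a.s.\ convergence of the empirical quantile functions $\hF_{x,N}^{-1}\to F_x^{-1}$ in $L_p$, which is exactly the ``$\psi$-weak/Wasserstein-type'' control you anticipated.

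Two minor technical points are worth tightening. First, in the lower-semicontinuity step you can avoid extending $\R$ to $[-\infty,+\infty]$ by truncating: since $Y_k\uparrow Y\succeq G_{\bar{x}}$, the sequence $Y_k\wedge G_{\bar{x}}\in\Z$ increases to $G_{\bar{x}}$, and $\R(Y_k)\ge\R(Y_k\wedge G_{\bar{x}})\to g(\bar{x})$ by norm continuity and dominated convergence in $L_p$. Second, to obtain a single null set valid for all $\bar{x}\in\X$, one must work with a countable dense subset of $\X$ together with a countable neighborhood base at each such point, and then use lower semicontinuity of $g$ to extend the $\liminf$ inequality to arbitrary $\bar{x}$; this is routine but should be made explicit.
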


\begin{remark}
\label{rem-1}
{\rm
Recall that it is   assumed that   the probability space $(\O,\F,P)$ is nonatomic. Then without loss of generality we can assume that $\O$ is the interval $[0,1]$ equipped with its Borel sigma algebra and uniform probability distribution $P$. We refer to this probability space as the {\em standard probability space}.
\begin{itemize}
  \item [$\bullet$]
 By $\LL_p$, $p\in [1,\infty)$,  we denote the space $L_p(\O,\F,P)$ defined on the
standard probability space $(\O,\F,P)$.
\end{itemize}
Recall that the dual $\LL^*_p=\LL_q$.
For a cdf $F$ we can view $F^{-1}$ as a measurable function defined on the standard probability space.  Then $F^{-1}$ is an element of the space $\LL_p$ iff $\int_{-\infty}^{+\infty}|z|^p dF(z)<+\infty$. With some abuse of
notation we write that a cdf $F\in \LL_p$ if $F^{-1}\in \LL_p$. Note also
that  an element
$Z\in \LL_p$ is distributionally equivalent  to  $F^{-1}_Z$, and $F^{-1}_Z\in \LL_p$ iff $Z\in \LL_p$.
}
\end{remark}

\setcounter{equation}{0}
\subsection{Dual representations of law invariant coherent risk measures}
\label{dualrepresentations}

 Every  coherent risk measure $\R:\LL_p\to\bbr$ has the  dual representation
 \begin{equation}\label{dual}
 \R(Z)=\sup_{\zeta\in \cA} \int_0^1 \zeta(t) Z(t)dt,
 \end{equation}
 where $\cA\subset\LL_p^*$ is a convex weakly$^*$ compact set of density functions. Since real valued coherent risk measures are continuous in the norm topology of the Banach space $\LL_p$, this dual representation follows from the Fenchel-Moreau Theorem  (cf., \cite{rs06}).

 For {\em law invariant} coherent risk measures the dual representation (\ref{dual}) can be written in the following form
\begin{equation}\label{asy-1}
\R(F)=\sup_{\sigma\in \Upsilon} \int_0^1 \sigma(\tau) F^{-1}(\tau)d\tau,
 \end{equation}
where $\Upsilon$ is a weakly$^*$ compact subset of  $\LL_p^*$ consisting of so-called spectral functions. A function $\sigma:[0,1)\to [0,+\infty)$ is called {\em spectral} if $\sigma(\cdot)$ is right side continuous,
monotonically nondecreasing and such that $\int_0^1 \sigma(\tau)d\tau=1$.
The representation (\ref{asy-1}) is obtained from (\ref{dual}) by noting that $Z\in \LL_p$ is distributionally equivalent to $F_Z^{-1}$, and
applying a measure preserving transformation
(cf., \cite{mor12}).
In particular if $\Upsilon=\{\sigma\}$ is a singleton, then
\begin{equation}\label{srmes}
\R(F)=  \int_0^1 \sigma(\tau) F^{-1}(\tau)d\tau
\end{equation}
 is called spectral (or distortion) risk measure. The so-called generating set $\Upsilon$ is not defined uniquely. In a sense minimal generating set is formed by the weak$^*$ topological closure of the set of spectral functions which are exposed points of the set $\cA$ (cf., \cite{pich12}, \cite[Section 6.3.4]{SDR}).
 Consider the set of maximizers in the right-hand side of (\ref{asy-1}),
 \begin{equation}\label{upmax}
 \bar{\Upsilon}(F):=\arg\max_{\sigma\in \Upsilon} \int_0^1 \sigma(\tau) F^{-1}(\tau)d\tau.
 \end{equation}
 Since $F^{-1}\in \LL_p$  and the set   $\Upsilon$ is   weakly$^*$ compact, it follows that the set $\bar{\Upsilon}(F)$ is nonempty and weakly$^*$ compact.

It is also  possible to write representation (\ref{asy-1}) in the following equivalent form  \begin{equation}\label{dist}
\R(F)=\sup_{\psi\in \Psi}
\left\{\int_0^{+\infty}\big[1-\psi(F(t))\big]dt-
\int_{-\infty}^0 \psi (F(t))dt\right\},
\end{equation}
where $\Psi:=\cV(\Upsilon)$ with   $\cV$ being  a mapping from the set of spectral functions into the space $C[0,1]$, defined as
\begin{equation}\label{mapp}
(\cV\sigma)(\alpha):=\int_0^\alpha \sigma(t)dt, \;\alpha\in [0,1].
\end{equation}
Indeed, note that for any spectral function $\sigma$, the corresponding
$\psi=\cV\sigma$ is convex, continuous monotonically nondecreasing  on the interval [0,1] function with $\psi(0)=0$ and $\psi(1)=1$.
By change of variables $\tau=F(t)$ and using integration by parts we can write
\begin{equation}\label{inter-1}
 \int_0^{+\infty}\big[1-\psi(F(t))\big]dt=
 \int_{F(0)}^1 (1-\psi(\tau))dF^{-1}(\tau)=
 \int_{F(0)}^1 F^{-1}(\tau)\psi'(\tau)d\tau.
\end{equation}
Similarly
\begin{equation}\label{inter-2}
 \int_{-\infty}^0 \psi (F(t))dt=\int_0^{F(0)}\psi(\tau)d F^{-1}(\tau)=- \int_0^{F(0)} F^{-1}(\tau)\psi'(\tau)d\tau,
 \end{equation}
 and hence (\ref{dist}) follows from (\ref{asy-1}).

The function $\psi=\cV\sigma$ is directionally differentiable. Its directional derivative
$\psi'(t,h)=\lim_{\tau\downarrow 0}[\psi(t+\tau h)-\psi(t)]/\tau$ is
\begin{equation}\label{dirderpsi}
\psi'(t,h)=\left\{
\begin{array}{lll}
  \psi'_+(t)h& {\rm if}& h\ge 0,\\
 \psi'_-(t)h& {\rm if}& h\le 0,
\end{array}\right.
\end{equation}
where $\psi'_-(t)$ and $\psi'_+(t)$ are the respective left and right side derivatives
\begin{equation}\label{discon}
 \psi'_-(t)=\lim_{\tau\uparrow t}\sigma(\tau)\;{\rm and}\;\psi'_+(t)=\lim_{\tau\downarrow t}\sigma(\tau).
\end{equation}
In particular, if $\sigma(\cdot)$ is continuous at $t$, then $\psi(\cdot)$ is differentiable at $t$ and   $\psi'(t)=\sigma(t)$.
 Note that
since a spectral function is  monotonically nondecreasing, the set of its discontinuous points is countable.

\begin{lemma}
\label{lem-map}
The mapping $\cV:\Upsilon\to C[0,1]$ is continuous with respect to the weak$^*$ topology of $\LL_p^*$ and  norm topology of $C[0,1]$.
\end{lemma}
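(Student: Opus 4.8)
The plan is to prove continuity by reducing to sequential convergence and then upgrading pointwise convergence to uniform convergence. First I would note that $\Upsilon$, being weakly$^*$ compact, is norm bounded in $\LL_q=\LL_p^*$, and since the predual $\LL_p$ is separable (it is $L_p$ on the standard probability space with $p\in[1,\infty)$), the weak$^*$ topology restricted to the bounded set $\Upsilon$ is metrizable. Since the target $C[0,1]$ is a metric space, it therefore suffices to prove that $\cV$ is sequentially continuous: whenever $\sigma_k\to\sigma$ in the weak$^*$ topology with $\sigma_k,\sigma\in\Upsilon$, one has $\|\cV\sigma_k-\cV\sigma\|_\infty\to 0$.

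The first, easy, step is pointwise convergence. Fixing $\alpha\in[0,1]$, the indicator $\ind_{[0,\alpha]}$ is bounded and hence belongs to $\LL_p$, so by the definition of weak$^*$ convergence and of $\cV$ in (\ref{mapp}),
\begin{equation*}
(\cV\sigma_k)(\alpha)=\lan \sigma_k,\ind_{[0,\alpha]}\ran \longrightarrow \lan \sigma,\ind_{[0,\alpha]}\ran=(\cV\sigma)(\alpha),
\end{equation*}
so that $\cV\sigma_k\to\cV\sigma$ pointwise on $[0,1]$.

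The main step will be to upgrade this to uniform convergence, and this is where the structure of spectral functions enters. Since each $\sigma_k\ge 0$, every $\cV\sigma_k$ is monotonically nondecreasing, and the limit $\cV\sigma$ is continuous (and nondecreasing) on the compact interval $[0,1]$, as observed after (\ref{mapp}). I would then invoke the classical P\'olya/Dini-type fact that pointwise convergence of nondecreasing functions to a continuous limit on a compact interval is automatically uniform. Concretely, fixing $\e>0$, I would use the uniform continuity of $\cV\sigma$ to choose a partition $0=t_0<\dots<t_n=1$ with $(\cV\sigma)(t_i)-(\cV\sigma)(t_{i-1})<\e$, apply the pointwise convergence at the finitely many nodes $t_i$ to get a single large index beyond which $|(\cV\sigma_k)(t_i)-(\cV\sigma)(t_i)|<\e$ for all $i$, and finally use monotonicity of $\cV\sigma_k$ to sandwich its values on each subinterval $[t_{i-1},t_i]$ between its values at the endpoints, giving $\|\cV\sigma_k-\cV\sigma\|_\infty\le 2\e$ eventually.

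I expect this monotone-to-uniform upgrade to be the only nontrivial point; everything else is a direct unravelling of the definitions. An alternative route to the same conclusion would bypass monotonicity by establishing equicontinuity directly: by H\"older's inequality $|(\cV\sigma)(\alpha)-(\cV\sigma)(\beta)|\le \|\sigma\|_q\,|\alpha-\beta|^{1/p}$, and since $\sup_{\sigma\in\Upsilon}\|\sigma\|_q<\infty$ by weak$^*$ compactness, the family $\{\cV\sigma:\sigma\in\Upsilon\}$ is uniformly bounded and equicontinuous; pointwise convergence of such a family forces uniform convergence by Arzel\`a--Ascoli. Either argument completes the proof.
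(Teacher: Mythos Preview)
Your proposal is correct and follows the same overall architecture as the paper's proof: metrizability of the weak$^*$ topology on the bounded set $\Upsilon$ via separability of $\LL_p$, reduction to sequential continuity, pointwise convergence via $(\cV\sigma_k)(\alpha)=\lan\sigma_k,\ind_{[0,\alpha]}\ran$, and then an upgrade to uniform convergence.

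The only notable difference is in the upgrade step. The paper exploits \emph{convexity} of the functions $\psi_n=\cV\sigma_n$ (which holds because spectral functions are nondecreasing), invoking the standard fact that pointwise convergence of convex functions forces local uniform convergence on the interior (cf.\ \cite[Theorem 10.8]{roc70}), and then uses monotonicity together with continuity of the limit to handle the endpoints $0$ and $1$. You instead use only \emph{monotonicity} of the $\psi_n$ (from $\sigma_n\ge 0$) and apply the P\'olya-type argument directly on the closed interval, which is more self-contained and avoids the separate treatment of the boundary. Your alternative equicontinuity route via H\"older's inequality is also valid and arguably the cleanest, since it yields a uniform modulus of continuity $|\alpha-\beta|^{1/p}$ times a fixed constant, making Arzel\`a--Ascoli immediately applicable. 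All three arguments are short; yours are marginally more elementary in that they do not appeal to convex-analytic machinery.
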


\begin{proof}
The set   $\Upsilon$ is a bounded subset of $\LL_p^*$. Since the space $\LL_p$ is
separable, it follows that the  weak$^*$ topology on $\Upsilon$ is metrizable. Therefore it suffices to show that if a sequence $\sigma_n\in \Upsilon$ converges to  $\sigma\in \Upsilon$ in weak$^*$ topology, then $\psi_n=\cV\sigma_n$ converges to $\psi=\cV\sigma$ in the  norm topology of $C[0,1]$. Note that functions $\psi_n$ and $\psi$ are convex continuous 
monotonically nondecreasing  on the interval [0,1]. Let us also observe that the
weak$^*$ convergence of $\sigma_n$ to $\sigma$ implies pointwise convergence  $\psi_n(t)\to \psi(t)$ for all $t\in [0,1]$. Indeed,
$\psi_n(t)=\lan \sigma_n,\ind_{[0,t]}\ran$ and
$\ind_{[0,t]}$ belongs to the space $\LL_p$.
Since functions $\psi_n$ are convex, it follows from the pointwise convergence  that $\sup_{t\in I}|\psi_n(t)-\psi(t)|$ tends to zero for any interval $I\subset (0,1)$ (e.g., \cite[Theorem 10.8]{roc70}). By monotonicity and continuity  of $\psi$ this
implies that $\psi_n$ converge to $\psi$ uniformly on [0,1]. This completes the proof. \hfill \fb
\end{proof}
\\

By the above discussion there is a one-to-one correspondence between representations (\ref{asy-1}) and (\ref{dist})  defined  by  $\Psi=\cV(\Upsilon)$.
Since the generating set $\Upsilon$ is weakly$^*$ compact, it follows that the set $\Psi=\cV(\Upsilon)$ is a compact subset of $C[0,1]$ (cf., \cite{bel2011}). Consider the set of maximizers in the right-hand side of (\ref{dist}),
\begin{equation}\label{mset-1}
 \bar{\Psi}(F):=\arg\max_{\psi\in \Psi}
\left\{\int_0^{+\infty}\big[1-\psi(F(t))\big]dt-
\int_{-\infty}^0 \psi (F(t))dt\right\}.
\end{equation}
It follows that
 $\bar{\Psi}(F)=\cV(\bar{\Upsilon}(F))$ is a nonempty and compact subset  of $C[0,1]$.

An important  risk measure is the Average Value-at Risk measure
\begin{equation}\label{avar}
\avr_\alpha (F)=\frac{1}{1-\alpha}\int_\alpha^1 F^{-1}(\tau) d\tau,\;\alpha\in [0,1).
\end{equation}
That is, $\avr_\alpha (F)$ is a spectral risk measure with the  spectral function
$\sigma(\cdot)=(1-\alpha)^{-1}
\ind_{[\alpha,1)}(\cdot)$. The corresponding space here is $\LL_1$, i.e., it is defined for $F$ such that  $\int |z|dF(z)<+\infty$.
Equivalently $\avr_\alpha (Z)$ can be written as
\begin{equation}\label{avar-2}
\avr_\alpha (F)=\inf_{\tau\in \bbr}\left\{
\tau+(1-\alpha)^{-1}
\int_{-\infty}^{+\infty} [z-\tau]_+ dF(z) \right\}.
\end{equation}
For $\alpha\in (0,1)$
the minimum in the right-hand side of (\ref{avar-2}) is  attained at any point of the interval $\cQ_F(\alpha)$  of $\alpha$-quantiles of the distribution   $F$, in particular at the left side quantile  $\tau=F^{-1}(\alpha)$. For $\alpha=0$, $\avr_0 (F)=\bbe_F[Z]$ although the minimum in
(\ref{avar-2}) is not attained if the distribution is unbounded from below.
Note that $\avr_\alpha (F)$ is monotonically nondecreasing in $\alpha\in [0,1)$, and tends to $\lim_{t\uparrow  1}F^{-1}(t)$ as $\alpha\to 1$.

Consider the transformation
\begin{equation}\label{tran-1}
 (\bbt\mu)(t):=\int_0^t (1-\alpha)^{-1}d\mu(\alpha),\;t\in [0,1),
\end{equation}
from the set of probability distribution functions (measures) $\mu(\cdot)$ on the interval [0,1) to the set of spectral functions. The inverse of this transformation is  (cf., \cite[Lemma 4.63]{fol02}, \cite[p.307]{SDR})
\begin{equation}\label{tran-2}
 (\bbt^{-1}\sigma)(\alpha) =(1-\alpha)\sigma(\alpha)+(\cV\sigma)(\alpha),
 \;\alpha\in [0,1),
\end{equation}
where mapping $\cV$ is defined in (\ref{mapp}).
Then representation (\ref{asy-1}) can be written in the following equivalent form
\begin{equation}\label{kus}
\R(F)=\sup_{\mu\in \cM}\int_0^1 \avr_\alpha (F)d\mu(\alpha),
\end{equation}
where $\cM:=\bbt^{-1}(\Upsilon)$.
The representation (\ref{kus}) is referred to as the Kusuoka representation of $\R$ (cf., \cite{kus:01}).

The mapping $\bbt$ is one-to-one and continuous\footnote{We consider here   the weak topology of probability  measures on the interval [0,1] and the weak$^*$ topology of $\LL_p^*$ } (cf., \cite[Proposition 3.4]{pich12}). It follows that the inverse mapping $\bbt^{-1}$ is also continuous on the set $\Upsilon$, the set
$\cM=\bbt^{-1}(\Upsilon)$ is compact, and the set
\begin{equation}\label{max-m}
 \bar{\cM}(F):=\arg\max_{\mu\in \cM}\int_0^1 \avr_\alpha (F)d\mu(\alpha)
\end{equation}
is nonempty compact   and $ \bar{\cM}(F)=\bbt^{-1}(\bar{\Upsilon}(F))$.

Note that since we assume that $F\in \LL_p$, $p\in [1,+\infty)$, measures in $\cM$ do not have positive mass at $\alpha=1$, although they may have positive mass  at $\alpha=0$. It will be convenient to write   explicitly measures   $\mu\in \cM$  in the form
\begin{equation}\label{mu-p}
  \mu=w\delta (0)+(1-w) \mu',
\end{equation}
where $w\in [0,1]$ and $\mu'$ is the respective probability measure on $[0,1]$ having zero mass at 0 and 1.

Using variational representation (\ref{avar-2}) of $\avr_\alpha$, it is possible to write the Kusuoka representation (\ref{kus}) in the following minimax form
\begin{equation}\label{minmax}
 \R(F)=\sup_{\mu\in \cM}\left\{w\int_{-\infty}^{+\infty} zdF(z) +(1-w)
 \int_0^1 \inf_{t \in \bbr}\left\{
\int_{-\infty}^{+\infty}  h_\alpha(z,t) dF(z)\right\}d\mu'(\alpha)\right\},
\end{equation}
where
\begin{equation}\label{funch}
 h_\alpha(z,t):=
  t+(1-\alpha)^{-1} [z-t]_+,\;\alpha\in (0,1).
\end{equation}

By interchanging the integral and minimization operators in the right-hand side of (\ref{minmax}) (cf., \cite[Theorem 14.60]{RW}), we can write
\begin{eqnarray}
\label{minm2}
\R(F)&=&\sup_{\mu\in \cM}\inf_{\tau \in \LL_p}
\left\{w\int_{-\infty}^{+\infty} zdF(z)+(1-w)
\int_0^1 \int_{-\infty}^{+\infty}
  h_\alpha(z,\tau(\alpha)) dF(z)d\mu'(\alpha)\right\} \\
  \label{minm3}
  &=& \sup_{\mu\in \cM}\inf_{\tau \in \LL_p}
 \left\{w\int_{-\infty}^{+\infty} zdF(z)+(1-w)
\int_0^1 \int_{-\infty}^{+\infty}
  h_\alpha(z,\tau(\alpha)) d\mu'(\alpha) dF(z)\right\}.
  \end{eqnarray}
Note that minimization in (\ref{minm2}) and (\ref{minm3}) is performed over functions $\tau\in \LL_p$.

By interchanging the `sup' and `inf' operators we can write the dual of problem (\ref{minm3}):
\begin{equation}\label{dual-1}
\inf_{\tau \in \LL_p}\sup_{\mu\in \cM}
 \left\{w\int_{-\infty}^{+\infty}zdF(z)+(1-w)
 \int_{-\infty}^{+\infty} \int_0^1
  h_\alpha(z,\tau(\alpha)) d\mu'(\alpha) dF(z)\right\}.
 \end{equation}
The set $\bar{\cM}(F)$, defined in (\ref{max-m}),  is also the set of optimal solutions of the problem (\ref{minm3}). We denote by  $\bar{\cT}(F)$ the set of optimal solutions of the dual problem (\ref{dual-1}).  The set  $\bar{\cT}(F)$  can be empty. Also the set $\bar{\cT}(F)$ can be   unbounded. For example, if $\R(\cdot):=\bbe[\cdot]$, then $\cM=\{\delta(0)\}$ and the right-hand side of
(\ref{dual-1}) does not depend on $\tau\in \LL_p$. In that case $\bar{\cT}(F)=\LL_p$.

\setcounter{equation}{0}
\section{Informal analysis}
\label{sec:infan}

In this section we discuss asymptotics of the empirical estimates $\R(\hF_N)$, in a somewhat informal way, and consider examples. By using
(\ref{minm3}) we can write
\begin{eqnarray}
\R(\hF_N)&=&\sup_{\mu\in \cM}\inf_{\tau \in \LL_p}\left\{ w\int_{-\infty}^{+\infty} zd \hF_N(z)+ (1-w)
  \int_{-\infty}^{+\infty} \int_0^1 h_\alpha(z,\tau(\alpha)) d\mu'(\alpha)d \hF_N(z)\right\}
  \\&= & \sup_{\mu\in \cM}\inf_{\tau \in \LL_p}
  \left\{w\bar{Z}+(1-w)
  \int_0^1  \hat{h}_{\alpha,N}(\tau(\alpha)) d\mu'(\alpha)\right\},
\end{eqnarray}
where $\bar{Z}:=N^{-1}\sum_{j=1}^N Z_j$ and
\[
\hat{h}_{\alpha,N}(t):= N^{-1}\sum_{j=1}^N h_\alpha(Z_j,t)=
 t+(1-\alpha)^{-1} N^{-1}\sum_{j=1}^N [Z_j-t]_+,\;  \alpha\in (0,1) .
\]

Suppose that the minimax problem (\ref{minm3}) has a nonempty set of saddle points, given by
 $\bar{\cM}(F)\times \bar{\cT}(F)$. Then
the minimax representation suggests the following asymptotics
\begin{equation}\label{inas-1}
\R(\hF_N)=\sup_{\mu\in \bar{\cM}(F)}\inf_{\tau \in \bar{\cT}(F)}
  \left\{w\bar{Z}+(1-w)
  \int_0^1  \hat{h}_{\alpha,N}(\tau(\alpha)) d\mu'(\alpha)\right\}+o_p(N^{-1/2})
\end{equation}
and
\begin{equation}\label{inas-2}
 N^{1/2}\big[\R(\hF_N)-\R(F)\big]\cdis \sup_{\mu\in \bar{\cM}(F)}\inf_{\tau \in \bar{\cT}(F)}\bby(\mu,\tau),
\end{equation}
where $\bby(\mu,\tau)$, $(\mu,\tau)\in \bar{\cM}(F)\times \bar{\cT}(F)$,  is the corresponding  Gaussian process (we will discuss this later).
 In particular, if the set  of saddle points
 is a singleton, $\bar{\cM}(F)=\{\bar{w}\delta(0)+(1-\bar{w})\bar{\mu}'\}$, $\bar{\cT}(F) =\{\bar{\tau}\}$,  then
   $N^{1/2}\big[\R(\hF_N)-\R(F)\big]$ converges in distribution to normal $\N(0,\nu^2)$ with variance
 \begin{equation}\label{inas-3}
\nu^2=\var_F\left\{\bar{w}Z+(1-\bar{w})\int_0^1
(1-\alpha)^{-1}
\left[Z-\bar{\tau}(\alpha)\right]_+d\bar{\mu}'
(\alpha)\right\}.
 \end{equation}

The above derivations are not rigorous. In Theorem \ref{th-minmax} of the next section we discuss a particular case where   formulas (\ref{inas-1}) - (\ref{inas-3}) can be rigourously  proved  by an application of a finite dimensional  minimax asymptotic distribution theorem (cf., \cite{sha08}). Formula (\ref{inas-2}) suggests that the asymptotic distribution of $\R(\hF_N)$ could be non-normal for two somewhat different reasons. Namely,  it could happen that the set $\bar{\cM}(F)$ is not a singleton. Recall that $\bar{\cM}(F)=\bbt^{-1}(\bar{\Upsilon}(F))$. Therefore $\bar{\cM}(F)$ is a singleton if $\bar{\Upsilon}(F)$ is a singleton, in particular if $\R$ is a spectral risk measure. As it was pointed out, the generating set $\Upsilon$, and hence the sets $\cM$ and $\Psi$, are not defined uniquely. Therefore uniqueness of the respective maximizers in (\ref{upmax}), (\ref{mset-1}) and (\ref{max-m}) should be verified for the  minimal representation.
It could also happen that the set $\bar{\cT }(F)$ is not a singleton. Let us discuss some illustrative examples.

\begin{example}[$\avr$ risk measure]
{\rm
Consider  $\R:=\avr_\alpha$,  $\alpha\in (0,1)$. This is a spectral risk measure. Its Kusuoka representation is given by the singleton $\cM=\{\delta(\alpha)\}$, and $\bar{\cT }(F)=\cQ_F(\alpha)$.
For this risk measure formula (\ref{inas-1}) becomes
\begin{equation}\label{av-v2}
\avr_\alpha (\hF_N)=\inf_{t\in \cQ_F(\alpha)} \left \{t +\frac{1}{(1-\alpha)N}
\sum_{j=1}^N [Z_j-t]_+\right \}
  +o_p(N^{-1/2}\}.
\end{equation}
Suppose that $\bbe_F[Z^2]<+\infty$. Then by the variational representation (\ref{avar-2}) of $\avr_\alpha$, the assertion   (\ref{av-v2})     follows from a general result about asymptotics of sample average approximations of stochastic programs
(cf., \cite[Theorem 3.2]{sha91}, \cite[Section 6.6.1]{SDR}).
If, moreover, $\cQ_F(\alpha)=\{F^{-1}(\alpha)\}$ is a singleton, then  $N^{1/2}\left[\avr_\alpha(\hat{F}_N)-
 \avr_\alpha(F)\right]$ converges in distribution to normal $\N(0,\nu^2)$ with
\begin{equation}\label{av-v3}
\nu^2=(1-\alpha)^2\var_F
\left([Z-F^{-1}(\alpha)]_+\right).
\end{equation}
It follows from (\ref{av-v2}) that uniqueness of the quantile $F^{-1}(\alpha)$ is also a necessary condition for asymptotic normality of $\avr_\alpha(\hat{F}_N)$. In Theorem \ref{th-minmax}  (of  Section \ref{sec:asym})  we give a more general result for which $\avr_{\alpha}$ risk measure is a particular case.

A different formula for the asymptotic variance of $\avr_\alpha(\hat{F}_N)$ was given in  \cite{pfl2010} and \cite{bel2011}. We are going to show now equivalence of their formula to (\ref{av-v3}).
Consider the $F$-Brownian bridge, denoted $\bb_F$. That is, $\bb_F(z)$  is a Gaussian process  with mean zero and covariances
\[
\bbe[\bb_F(z)\bb_F(z')]=F(z \wedge z')-F(z)F(z').
\]
Note that for any $\tau\in \bbr$,
\begin{eqnarray}
\label{su-2}
\var\left\{ \int_{\tau}^{+\infty} \bb_F(z)dz\right\}&=&
 \int_{\tau}^{+\infty} \int_{\tau}^{+\infty}\big [F(x \wedge y)-F(x)F(y)\big]dxdy\\
 \label{su-3}
 &=&
 \int_{\tau}^{+\infty} \int_{\tau}^{+\infty}\big [\bar{F}(x \vee y)-\bar{F}(x)\bar{F}(y)\big]dxdy\\
  \label{su-4}
 &=&
 \int_{\alpha}^{1}\int_{\alpha}^{1}
 (s\wedge t- st)dF^{-1}(s)dF^{-1}(t),
\end{eqnarray}
where $\bar{F}(\cdot):=1-F(\cdot)$.

\begin{lemma}
\label{lm-eq}
Suppose that  $F$ has finite second order moment  and let $\tau\in \bbr$. Then
\begin{equation}\label{equv}
\var_F \left( [Z -\tau]_+\right)=
\int_{\tau}^{+\infty} \int_{\tau}^{+\infty}\big [F(x \wedge y)-F(x)F(y)\big]dxdy.
\end{equation}
\end{lemma}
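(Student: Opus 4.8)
The plan is to reduce the computation of $\var_F([Z-\tau]_+)$ to integrals of the cdf via a layer-cake representation, and then to identify the resulting integrand with $F(x\wedge y)-F(x)F(y)$.

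First I would set $W:=[Z-\tau]_+$, a nonnegative random variable, and record the elementary identity $W=\int_0^{+\infty}\ind_{\{W>s\}}\,ds$. Since $\{W>s\}=\{Z>\tau+s\}$ for $s\ge 0$, the substitution $x=\tau+s$ gives $W=\int_\tau^{+\infty}\ind_{\{Z>x\}}\,dx$. Squaring and using $\ind_{\{Z>x\}}\,\ind_{\{Z>y\}}=\ind_{\{Z>x\vee y\}}$ yields $W^2=\int_\tau^{+\infty}\int_\tau^{+\infty}\ind_{\{Z>x\vee y\}}\,dx\,dy$.

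Next I would take expectations. Because the integrands are nonnegative, Tonelli's theorem justifies interchanging $\ex$ with the (double) Lebesgue integral, giving $\ex[W]=\int_\tau^{+\infty}\bar F(x)\,dx$ and $\ex[W^2]=\int_\tau^{+\infty}\int_\tau^{+\infty}\bar F(x\vee y)\,dx\,dy$, where $\bar F:=1-F$ and I have used $P(Z>x)=\bar F(x)$. The hypothesis that $F$ has finite second moment guarantees $\ex[W^2]<+\infty$ (and hence $\ex[W]<+\infty$), since $W\le |Z|+|\tau|$, so both quantities are finite and may be subtracted. Writing $(\ex[W])^2=\int_\tau^{+\infty}\int_\tau^{+\infty}\bar F(x)\bar F(y)\,dx\,dy$, I obtain $\var_F(W)=\int_\tau^{+\infty}\int_\tau^{+\infty}\big[\bar F(x\vee y)-\bar F(x)\bar F(y)\big]\,dx\,dy$.

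Finally I would simplify the integrand. Expanding gives $\bar F(x\vee y)-\bar F(x)\bar F(y)=F(x)+F(y)-F(x\vee y)-F(x)F(y)$, and since $F$ is nondecreasing one has $F(x\vee y)=\max(F(x),F(y))$, so that $F(x)+F(y)-F(x\vee y)=\min(F(x),F(y))=F(x\wedge y)$. The integrand therefore equals $F(x\wedge y)-F(x)F(y)$, which is exactly the claimed expression (and, being of the form $\min(a,b)-ab$ with $a,b\in[0,1]$, it is nonnegative, consistent with its being an integrated variance). The only step requiring genuine care is the Fubini/Tonelli interchange; this is clean here precisely because the indicator integrands are nonnegative, and the finite second moment ensures the resulting double integral converges. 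Equivalently, one may phrase the whole argument as $\var_F(W)=\int_\tau^{+\infty}\int_\tau^{+\infty}\cov\big(\ind_{\{Z>x\}},\ind_{\{Z>y\}}\big)\,dx\,dy$ and compute the covariance of the two indicators directly, which reproduces $\bar F(x\vee y)-\bar F(x)\bar F(y)=F(x\wedge y)-F(x)F(y)$ and matches the identity already used in passing from \eqref{su-2} to \eqref{su-3}.
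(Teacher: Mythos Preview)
Your proof is correct. Both you and the paper reduce to the identity $\var_F(W)=\int_\tau^{+\infty}\int_\tau^{+\infty}[\bar F(x\vee y)-\bar F(x)\bar F(y)]\,dx\,dy$ and then invoke the algebraic equality $\bar F(x\vee y)-\bar F(x)\bar F(y)=F(x\wedge y)-F(x)F(y)$. The difference is in how that intermediate identity is reached: the paper computes $\bbe_F[(Z-\tau)_+]$ and $\bbe_F[(Z-\tau)_+^2]$ by integration by parts against $dF$, which requires checking the boundary conditions $\lim_{z\to\infty}z\bar F(z)=0$ and $\lim_{z\to\infty}z^2\bar F(z)=0$ from the second-moment hypothesis; you instead use the layer-cake representation $W=\int_\tau^{+\infty}\ind_{\{Z>x\}}\,dx$ and Tonelli, so nonnegativity handles the interchange automatically and the second-moment assumption is used only to ensure finiteness before subtracting. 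Your route is slightly cleaner in that it sidesteps the boundary-term bookkeeping; the paper's route is perhaps more in keeping with the Stieltjes-integral manipulations used elsewhere in the section. Either way the argument is complete.
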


\begin{proof}
We have
\[
\var_F \left( [Z -\tau]_+\right)=
\int_{\tau}^{+\infty}(z-\tau)^2dF(z)-
\left(\int_{\tau}^{+\infty} (z-\tau)dF(z)\right)^2.
\]
Since $\bbe_F[Z^2]<+\infty$ it follows that $\lim_{z\to+\infty} z \bar{F}(z)=0$. Hence
using integration by parts
 we can write
\[
\int_{\tau}^{+\infty}
(z-\tau)dF(z)=-\int_{\tau}^{+\infty}
(z-\tau)d\bar{F}(z)=-\int_{\tau}^{+\infty}
\bar{F}(z)dz,
\]
and hence
\begin{equation}\label{derv-1}
\left(\int_{\tau}^{+\infty} (z-\tau)dF(z)\right)^2=\int_{\tau}^{+\infty}
\int_{\tau}^{+\infty}
\bar{F}(x)\bar{F}(y) dxdy.
\end{equation}
Now
\[
 \int_{\tau}^{+\infty}  \bar{F}(x \vee y)dx=\int_\tau^{y}\bar{F}(y)dx+
\int_y^{+\infty} \bar{F}(x)dx=(y-\tau)\bar{F}(y)+
\int_y^{+\infty} \bar{F}(x)dx.
\]
Hence
\begin{eqnarray*}
\int_{\tau}^{+\infty} \int_{\tau}^{+\infty}  \bar{F}(x \vee y)dxdy&=&
\int_{\tau}^{+\infty}(y-\tau)\bar{F}(y)dy+
\int_{\tau}^{+\infty}\int_y^{+\infty} \bar{F}(x)dxdy\\
&=& \int_{\tau}^{+\infty}(x-\tau)\bar{F}(x)dx+
\int_{\tau}^{+\infty}\int_{\tau}^{x}
\bar{F}(x)dydx\\
&=& 2\int_{\tau}^{+\infty}(x-\tau)\bar{F}(x)dx.
\end{eqnarray*}
Since $\bbe_F[Z^2]<+\infty$ it follows that $\lim_{z\to+\infty} z^2 \bar{F}(z)=0$, and hence using integration by parts we can write
\begin{equation}\label{derv-2}
\int_{\tau}^{+\infty}(z-\tau)^2dF(z)=
-\int_{\tau}^{+\infty}(z-\tau)^2d\bar{F}(z)
= 2\int_{\tau}^{+\infty}(z-\tau)\bar{F}(z)dz=
\int_{\tau}^{+\infty} \int_{\tau}^{+\infty}  \bar{F}(x \vee y)dxdy.
\end{equation}
Noting equivalence of (\ref{su-2}) and (\ref{su-3}), we obtain (\ref{equv})  by (\ref{derv-1}) and (\ref{derv-2}).
\hfill \fb
\end{proof}
\\

By using (\ref{equv}) the right-hand side of (\ref{av-v3}) can be written in terms of the cdf $F$.
}
\end{example}

\begin{example}[Absolute semideviation  risk measure]\label{ex-2}
{\rm
Consider  risk measure
\begin{equation}\label{abs-1}
  \R_c(F):=\bbe_F[Z]+c\,
  \bbe_F[Z-\bbe_F(Z)]_+, \;c\in (0,1].
\end{equation}
We assume that cdf $F$ has finite first order moment, i.e., $\R_c(\cdot)$ is defined on $\LL_1$. This risk measure has the following representation (cf., \cite{mor12})
\begin{eqnarray}
\label{abs-2}
\R_c(F)&=&\sup_{\gamma\in [0,1]}
  \big\{(1-c\gamma)\bbe_F(Z)+c\gamma
  \avr_{1-\gamma}(F)\big\}\\
\label{abs-3}
&=& \sup_{\gamma\in [0,1]}\inf_{t\in \bbr}
\bbe_F\big\{(1-c\gamma)Z+c\gamma t +c[Z-t]_+\big\}\\
\label{abs-3a}
&=&\inf_{t\in \bbr} \sup_{\gamma\in [0,1]}
\bbe_F\big\{(1-c\gamma)Z+c\gamma t +c[Z-t]_+\big\}.
\end{eqnarray}
Representation (\ref{abs-2}) is the (minimal) Kusuoka representation (\ref{kus}) of $\R_c$ with the corresponding set
$
\cM=\cup_{\gamma\in [0,1]}\{(1-c\gamma)\delta(0)+c\gamma \delta(1-\gamma)\}.
$
 Since
 \[
 \sup_{\gamma\in [0,1]}
\bbe_F\big\{(1-c\gamma)Z+c\gamma t +c[Z-t]_+\big\}=\bbe[Z]+
c\max\left\{\bbe[Z-t]_+,\bbe[t-Z]_+\right\},
 \]
 it follows that problem (\ref{abs-3a}) has unique optimal solution $t^*=\cm$, where $\cm:=\bbe_F[Z]$.
Now the set of minimizers of $\gamma t+ \bbe[Z-t]_+$,  over $t\in \bbr$, is defined by the equation $F(t)=1-\gamma$. It follows that
the set of saddle points of the minimax representation  (\ref{abs-3}) is
$[\underline{\gamma},\overline{\gamma}]\times \{\cm\}$, where
\[
\underline{\gamma}:=1-\prob(Z\le\cm),\;\;
 \overline{\gamma}:=1-\prob(Z<\cm)
  \]
  (cf., \cite[Section 6.6.2]{SDR}). In other words here
 \[
\bar{\cM}(F)=\cup_{\gamma\in [\underline{\gamma},\overline{\gamma}]}
\{(1-c\gamma)\delta(0)+c\gamma \delta(1-\gamma)\}
\]
  and  $\bar{\cT}(F)=\{\bar{\tau}(\alpha)\}$ is the singleton with $\bar{\tau}(\cdot)\equiv \bbe_F[Z]$.

The minimax representation (\ref{abs-3}) leads to the following asymptotics. Suppose that $\bbe_F[Z^2]<+\infty$. Then by a finite dimensional minimax asymptotics theorem (cf., \cite{sha08})
\begin{equation}\label{abs-4}
  \R_c(\hF_N)=\sup_{\gamma\in [\underline{\gamma},\overline{\gamma}]}
  \left\{ c\gamma \cm+(1-c\gamma) \bar{Z}+
  cN^{-1}
  \sum_{j=1}^N \big [Z_j- \cm\big]_+\right\}+o_p(N^{-1/2}),
\end{equation}
where $\bar{Z}:=N^{-1} \sum_{j=1}^N  Z_j$. In particular if the cdf $F(\cdot)$ is continuous at $\cm=\bbe_F[Z]$, then $N^{1/2}\big[\R_c(\hF_N)-\R_c(F)\big]$ converges in distribution to normal $\N(0,\nu^2)$ with variance
\begin{equation}\label{varabs}
 \nu^2=\var_F\big\{(1-c \gamma^*)Z
  +c[Z-\cm]_+\big\},
\end{equation}
where $\gamma^*:=1-F(\cm)=\bar{F}(\cm)$.
}
\end{example}

\subsection{Von Mises statistical functionals}

Let $G\in \LL_p$ be an arbitrary cdf and consider  convex combination $(1-t)F+tG=F+tH$, where $H:=G-F$. Suppose that the risk measure $\R$ is directionally differentiable at $F$ in direction $H$, i.e., the following limit exists
\begin{equation}\label{mises-1}
 \R'_F(H):=\lim_{t\downarrow 0}\frac{\R(F+tH)-\R(F)}{t}.
\end{equation}
If moreover the directional derivative $\R'_F(\cdot)$ is linear, then it is said that $\R$ is G\^{a}teaux differentiable at $F$.

Consider the approximation
\begin{equation}\label{mises-2}
\R(\hF_N)-\R(F)\approx \R'_F(\hF_N-F),
\end{equation}
and hence
\begin{equation}\label{mises-3}
N^{1/2}[\R(\hF_N)-\R(F)]\approx \R'_F\left(N^{1/2}(\hF_N-F)\right ).
\end{equation}
Since $N^{1/2}(\hF_N-F)$ converges in distribution to the $F$-Brownian bridge $\bb_F$, the approximation (\ref{mises-3}) suggests that
$N^{1/2}[\R(\hF_N)-\R(F)]$ converges in distribution to $\R'_F\left(\bb_F\right)$.

If moreover $\R$ is G\^{a}teaux differentiable at $F$, then
\begin{equation}\label{mises-4}
\R'_F(\hF_N-F)=\frac{1}{N}\sum_{j=1}^N IF(Z_j),
\end{equation}
where
\begin{equation}\label{mises-5}
IF(z):=\R'_F\left(\delta(z)-F\right)
\end{equation}
is the so-called {\em influence function}.
 Consequently
 $N^{1/2}[\R(\hF_N)-\R(F)]$ converges in distribution to normal $\N(0,\nu^2)$, with
$
\nu^2=\bbe_F[IF(Z)^2].
$
Of course, the above are heuristic arguments which require a rigourous justification.

Now consider representation (\ref{dist}) of the risk measure $\R$. Recall that each   function $\psi=\cV\sigma$ is directionally differentiable with directional derivative (\ref{dirderpsi}).
We have that
\[
\lim_{t\downarrow 0}\int_{-\infty}^0\frac{\psi (F(z)+t H(z))  -  \psi (F(z))}{t} dz =
\int_{-\infty}^0 \psi'(F(z),H(z))dz,
 \]
provided the limit and integral operators can be interchanged. Similar arguments can be applied  to the first integral term in the right-hand side of (\ref{dist}). It follows that if the set $\Psi=\{\psi\}$ is a singleton, i.e., $\R$ is a spectral risk measure, then
\begin{equation}\label{mises-6}
\R'_F(H)=-\int_{-\infty}^{+\infty} \psi'(F(z),H(z))dz,
\end{equation}
provided that the limit and integral operators can be interchanged. This suggests the asymptotics
 \begin{equation}\label{mises-7}
 N^{1/2}\left[\R(\hF_N)-
 \R(F)\right] \cdis - \int_{-\infty}^{+\infty} \psi'(F(z),\bb_F(z))dz.
\end{equation}

Now suppose that  every spectral function $\sigma\in \bar{\Upsilon}(F)$ is continuous at every point where $F^{-1}$ is discontinuous. Then
\begin{equation}\label{mises-8}
\R'_F(H)=\sup_{\sigma\in \bar{\Upsilon}(F)}\left\{ -\int_{-\infty}^{+\infty} \sigma(F(z))H(z)dz\right\}.
\end{equation}
In that case the suggested asymptotics are
\begin{equation}\label{mises-9}
 N^{1/2}\left[\R(\hF_N)-
 \R(F)\right] \cdis \sup_{\sigma\in \bar{\Upsilon}(F)} \left\{- \int_{-\infty}^{+\infty}\sigma(F(z))\bb_F(z)dz\right\}.
\end{equation}

For example, consider the mean-semideviation  risk measure
\begin{equation}\label{semi-1}
 \R_c(F):=\bbe_F[Z]+
 c\left(\bbe_F[Z-\bbe_F[Z]]_+^2\right)^{1/2},\;c\in (0,1].
\end{equation}
If $F(\cdot)$ is continuous at $\cm:=\bbe_F[Z]$, then $\R_c(\cdot)$ is G\^{a}teaux differentiable at $F$ and the corresponding influence function is
\begin{equation}\label{influ}
 IF(z)=z+ c(2\theta)^{-1}\left(
 [z-\cm]_+^2-\theta^2+2\kappa (1-F(\cm))(z-\cm)\right),
\end{equation}
where $\theta:=\left(\bbe_F[Z-\bbe_F[Z]]_+^2
\right)^{1/2}$ and $\kappa:=\bbe_F[Z-\cm]_+$. This indicates that continuity of $F(\cdot)$ at $\cm$ is a necessary condition for  $\R_c(\cdot)$ to be G\^{a}teaux differentiable at $F$, and hence for $\R(\hF_N)$ to be asymptotically normal.

\setcounter{equation}{0}
\section{Discrete Kusuoka case}
\label{sec:asym}

\subsection{Asymptotics of risk measures}

In this section we discuss asymptotics of empirical estimates $\R(\hF_N)$, and more generally of the optimal values $\hat{\vv}_N$ of the SAA problem (\ref{ropt-2}),
 for risk measures of the following form. For this class of risk measures some of the  required results are readily available.

Consider Kusuoka representation (\ref{kus}) and suppose that the set $\cM$ consists of measures supported on finite set $\{\alpha_0,\alpha_1,...,\alpha_k\}$, where
 $0=\alpha_0< \alpha_1<\alpha_2<\cdots\alpha_k<1$. That is
\begin{equation}\label{kex-1}
 \R(Z)=\sup_{w\in \cW}\left\{w_0\bbe[Z]+\sum_{i=1}^k w_i\avr_{\alpha_i}(Z)\right\},
\end{equation}
where $\cW$ is a nonempty   subset of $\Delta_{k+1}:=\{w\in \bbr_+^{k+1}:w_0+...+w_{k}=1\}$.
Recall that $\bbe[Z]=\avr_0[Z]$.
Note that $\R(Z)$ is finite valued for every $Z\in \LL_1$. Therefore we assume that $\R$ is defined on $\LL_1$, i.e., $\R:\LL_1\to \bbr$.
Note that $\R$ is not changed if $\cW$ is replaced by the topological closure of its convex hull. Therefore we assume that $\cW$ is {\em convex} and {\em closed}.

By making transformation (\ref{tran-1}) the above risk measure $\R$ can be written in the form
(\ref{asy-1}) with the corresponding set of spectral functions
\begin{equation}\label{spectr}
\Upsilon=\left\{\sigma: \sigma=w_0+\sum_{i=1}^k w_i (1-\alpha_i)^{-1}  \ind_{[\alpha_i,1]},\;w\in \cW\right\}.
\end{equation}
 Using representation (\ref{avar-2})
 we can write $\R(F)$ in the form
 \begin{equation}
 \label{kex-2}
 \R(F)=\sup_{w\in \cW}\inf_{\tau\in\bbr^k} \bbe_F[\phi(Z,w,\tau)]\\
= \inf_{\tau\in\bbr^k}\sup_{w\in \cW}\bbe_F[\phi(Z,w,\tau)],
 \end{equation}
where
 \begin{equation}
 \label{fun-v}
\phi(z,w,\tau):=
 w_0 z +\sum_{i=1}^k w_i\left(\tau_i+(1-\alpha_i)^{-1}
  [z-\tau_i]_+\right).
 \end{equation}
This representation is a particular case of the general minimax formula  (\ref{minm2})-(\ref{minm3}).
 The  $`\inf$' and $`\sup$' in (\ref{kex-2})  can be interchanged since the objective function is linear in $w$ and convex in $\tau$ and the set $\cW$  is compact.

 We make the following assumption.
 \begin{itemize}
   \item [(A)]
For every $i\in \{1,...,k\}$ there exists $w\in \cW$ such that $w_i\ne 0$.
 \end{itemize}
 This is a natural condition. Otherwise there is
$i\in \{1,...,k\}$ such that $w_i=0$ for all $w\in \cW$. In that case we can reduce the considered set $\{\alpha_0,\alpha_1,...,\alpha_k\}$ by removing the corresponding point $\alpha_i$.
Since the set $\cW\subset \Delta_{k+1}$ is convex, condition (A) means that the relative interior of $\cW$ consists of points with all their coordinates being positive.

  Consider the set
 \begin{equation}
 \label{extr-1}
 \overline{\cW}:=\arg\max_{w\in \cW}\sum_{i=0}^k w_i\avr_{\alpha_i}(Z),
 \end{equation}
 of maximizers in (\ref{kex-1}). Since the set $\cW$ is nonempty and compact, the set $\overline{\cW}$ is nonempty. This is also the set of maximizers in  (\ref{kex-2}).
 Note also that, under condition (A),
 \begin{equation}
 \label{extr-2}
 \arg\min_{\tau\in \bbr^k}
  \sup_{w\in \cW} \bbe_F[\phi(Z,w,\tau)]=\cQ_F(\alpha_1)\times\cdots\times \cQ_F(\alpha_k)=: \overline{\cT}.
  \end{equation}
  Indeed for every $\bar{w}\in \cW$   such that $\bar{w}_i\ne 0$, $i=1,...,k$, we have that
  $\arg\min_{\tau\in \bbr^k} \bbe_F[\phi(Z,\bar{w},\tau)]=\overline{\cT}$. The
 maximum in (\ref{extr-2}) will not be changed if we replace the set $\cW$ by its relative  interior.  Since the relative  interior  of the set $\cW$ consists of   points with all nonzero coordinates,  the equality (\ref{extr-2}) follows. It follows that the set of saddle points of the minimax problem (\ref{kex-2}) is $\overline{\cW}\times \overline{\cT}$.

 \begin{theorem}
 \label{th-minmax}
 Suppose that $\R$ is of the form {\rm (\ref{kex-1})}, condition {\rm (A)} holds  and $\bbe_F[Z^2]<+\infty$. Then
 \begin{equation}
 \label{asex-1}
  \R(\hat{F}_N)=\sup_{w\in \overline{\cW}}  \inf_{\tau\in \overline{\cT}}\left\{w_0\bar{Z}+\sum_{i=1}^k w_i\left(\tau_i+\frac{1}{N (1-\alpha_i)}
 \sum_{j=1}^N[Z_j-\tau_i]_+\right)\right\}
 +o_p(N^{-1/2})
 \end{equation}
and
 \begin{equation}
 \label{asex-1a}
 N^{1/2}\big[\R(\hF_N)-\R(F)\big]\cdis \sup_{w\in \overline{\cW}}  \inf_{\tau\in \overline{\cT}}\bby(w,\tau),
 \end{equation}
 where $\bby(w,\tau)$  is a Gaussian process with mean zero and covariances
 \begin{equation}\label{covg}
 \bbe_F[\bby(w,\tau)\bby(w',\tau') ]=
 \cov_F\left(w_0Z+\sum_{i=1}^k \frac{w_i}{1-\alpha_i}
\left[Z-\tau_i\right]_+,w'_0Z+\sum_{i=1}^k \frac{w'_i}{1-\alpha_i}
\left[Z-\tau_i'\right]_+\right).
 \end{equation}
  Moreover, if the sets $\overline{\cW}=\{\bar{w}\}$ and  $ \overline{\cT}=\{\bar{\tau}\}$ are singletons, then $N^{1/2}\big[\R(\hF_N)-\R(F)\big]$ converges in distribution to normal $\N(0,\nu^2)$ with variance
 \begin{equation}\label{asex-2}
\nu^2=\var_F\left\{\bar{w}_0Z+\sum_{i=1}^k \frac{\bar{w}_i}{1-\alpha_i}
\left[Z- \bar{\tau}_i\right]_+\right\}.
 \end{equation}
 \end{theorem}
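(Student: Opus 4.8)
The plan is to read (\ref{asex-1})--(\ref{asex-2}) as the conclusion of a finite-dimensional minimax SAA asymptotics and to reduce the whole statement to a verification of the hypotheses of the minimax delta-theorem of \cite{sha08}. Writing $g(w,\tau):=\bbe_F[\phi(Z,w,\tau)]$ and $\hat g_N(w,\tau):=N^{-1}\sum_{j=1}^N\phi(Z_j,w,\tau)$, representation (\ref{kex-2}) gives $\R(F)=\sup_{w\in\cW}\inf_{\tau\in\bbr^k}g(w,\tau)$, while $\R(\hF_N)=\sup_{w\in\cW}\inf_{\tau\in\bbr^k}\hat g_N(w,\tau)$ is exactly the SAA optimal value of a convex-concave minimax program whose saddle set has already been identified as $\overline{\cW}\times\overline{\cT}$ in (\ref{extr-1})--(\ref{extr-2}). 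Since $F\in\LL_2$, each $\cQ_F(\alpha_i)$ is a bounded closed interval, so $\overline{\cT}$ is compact, and $\overline{\cW}$ is compact as a closed subset of $\Delta_{k+1}$; this places us in the required setting.

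First I would truncate the unbounded inner domain. The map $\tau\mapsto\sup_{w\in\cW}g(w,\tau)$ is convex, and condition (A) --- which forces each coordinate $\tau_i$ to carry a strictly positive weight for some $w\in\cW$ --- makes it coercive, since the $i$-th summand grows like $\tau_i$ as $\tau_i\to+\infty$ and like $-\alpha_i(1-\alpha_i)^{-1}\tau_i$ as $\tau_i\to-\infty$, both tending to $+\infty$. A uniform law of large numbers then keeps the SAA minimizers inside a fixed compact box $K\supset\overline{\cT}$ with probability tending to one, so one may replace $\bbr^k$ by $K$ without affecting the asymptotics. On $\cW\times K$ the integrand $\phi$ is linear in $w$ and Lipschitz in $\tau$ (because $z\mapsto[z-\tau_i]_+$ is $1$-Lipschitz), with a square-integrable envelope thanks to $\bbe_F[Z^2]<\infty$; hence $\{\phi(\cdot,w,\tau):(w,\tau)\in\cW\times K\}$ is a Donsker class and $N^{1/2}(\hat g_N-g)\cdis\bby$ in $C(\cW\times K)$ for a zero-mean Gaussian process $\bby$. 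Its covariance is that of $\phi(Z,\cdot,\cdot)$; the deterministic part $\sum_i w_i\tau_i$ contributes nothing, leaving precisely (\ref{covg}).

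With the functional CLT in hand, the sup-inf optimal-value functional $u\mapsto\sup_{w\in\cW}\inf_{\tau\in K}u(w,\tau)$ is Hadamard directionally differentiable at $g$, its derivative in direction $\bby$ being the restricted sup-inf $\sup_{w\in\overline{\cW}}\inf_{\tau\in\overline{\cT}}\bby(w,\tau)$; the functional delta-method, as packaged in \cite{sha08}, then yields both the stochastic expansion (\ref{asex-1}) and the limit law (\ref{asex-1a}). The singleton case is immediate: when $\overline{\cW}=\{\bar w\}$ and $\overline{\cT}=\{\bar\tau\}$ the sup-inf degenerates to the single centered Gaussian variable $\bby(\bar w,\bar\tau)$, whose variance equals $\var_F$ of the random part of $\phi(Z,\bar w,\bar\tau)$, namely (\ref{asex-2}).

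I expect the main obstacle to lie in the two places where the hypotheses of \cite{sha08} are not automatic: the coercivity and uniform-LLN argument justifying the truncation of $\bbr^k$ to a compact set, and the treatment of a possibly nondegenerate saddle set --- arising when some quantile $F^{-1}(\alpha_i)$ is non-unique or $\overline{\cW}$ is not a singleton --- which is exactly what forces the limit to be a sup-inf of a Gaussian process rather than a single normal law. Establishing the functional CLT (stochastic equicontinuity on $\cW\times K$) under only the moment condition $\bbe_F[Z^2]<\infty$ is where the technical care concentrates, although the linearity in $w$ together with the convexity and Lipschitz continuity in $\tau$ render each such check routine rather than deep.
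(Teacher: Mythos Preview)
Your proposal is correct and follows essentially the same route as the paper: reduce to a compact $\tau$-domain, establish the functional CLT for $\hat g_N$ on $\cW\times K$ via the Lipschitz property of $\phi$ and the second-moment assumption, and then invoke the minimax SAA asymptotics of \cite{sha08}. The paper justifies the truncation slightly differently --- by noting directly that the SAA minimizers $\hat\tau_N$ converge w.p.1 to $\overline{\cT}$ rather than via your coercivity/ULLN argument --- and verifies the functional CLT by exhibiting a piecewise-linear Lipschitz constant $C(Z)$ with $\bbe[C(Z)^2]<\infty$ and citing \cite[Example~19.7]{vaart}, but the substance is the same.
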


 \begin{proof}
 Consider function $\phi(Z,w,\tau)$
 defined in (\ref{fun-v}). Together with (\ref{kex-2})
 we have that
\begin{equation}\label{minm-2}
 \R(\hF_N)=\sup_{w\in \cW}  \inf_{\tau\in\bbr^k}N^{-1}\sum_{j=1}^N
 \phi(Z_j,w,\tau)=  \inf_{\tau\in\bbr^k}\sup_{w\in \cW}N^{-1}\sum_{j=1}^N
 \phi(Z_j,w,\tau).
 \end{equation}
The set $\overline{\cT}$ is nonempty and compact. We have  that the distance from a minimizer $\hat{\tau}_N$ in (\ref{minm-2}) to the set $\overline{\cT}$ tends to zero w.p.1 as $N\to +\infty$. Therefore as far as the  asymptotics is concerned,  the minimization  in $\tau$ in (\ref{minm-2})  can be reduced to a compact set $\cS\subset \bbr^k$   containing the set $\overline{\cT}$ in its interior. We can view $\hat{\phi}_N(w,\tau):=N^{-1}\sum_{j=1}^N
\phi(Z_j,w,\tau)$ as a random element of $C(\cW,\cS)$.
 Note that
 \[
 |\phi(Z,w,\tau)-\phi(Z,w',\tau')|\le C(Z)(\|w-w'\|+\|\tau- \tau'\|),
 \]
 where $C(\cdot)$ is a piecewise linear function. Hence it follows from the condition $\bbe[Z^2]<+\infty$ that $\bbe[C(Z)^2]<+\infty$. Consequently $N^{1/2}\left[\hat{\phi}_N(w,\tau)-
 \bbe_F[\phi(w,\tau,Z)]\right]$ converges in distribution (weakly) to a  random element of $C(\cW,\cS)$ with the respective  covariance structure of the
  Gaussian process  $\bby(w,\tau)$ (e.g., \cite[Example 19.7, p.271]{vaart}).

The minimax problem  (\ref{kex-2}) is convex in $\tau$ and concave (linear) in $w$, and  $\overline{\cW}\times \overline{\cT}$ is its set of saddle points.
Now proof can be completed by applying   a general result about asymptotics of minimax SAA problems (cf., \cite{sha08}, \cite[Section 5.1.4]{SDR}). \hfill\fb
 \end{proof}
 \\

Compared with the corresponding  results in \cite{pfl2010} and \cite{bel2011}, no assumptions about tail behavior of the distribution $F$ and uniqueness of the respective quantiles  were made in Theorem \ref{th-minmax}  apart from the assumption of existence of the second order moments. Also note that
\begin{equation}\label{covg-2}
 \cov_F\left([Z-t]_+,[Z-s]_+\right)=
 \int_{t}^{+\infty} \int_{s}^{+\infty}\big [F(x \wedge y)-F(x)F(y)\big]dxdy
\end{equation}
(see Lemma \ref{lm-eq}).

 \begin{cor}
\label{cor-avas}
Suppose that $\R$ is of the form {\rm (\ref{kex-1})}, condition {\rm (A)} holds  and $\bbe_F[Z^2]<+\infty$. Then
\begin{equation}\label{asd-2}
 N^{1/2}\left[\R(\hF_N)-
 \R(F)\right] \cdis \sup_{w\in \overline{\cW}}  \inf_{\tau\in\overline{\cT}}
 \int_{-\infty}^{+\infty}\kappa_{w,\tau} (z)
  \bb_F(z)dz,
\end{equation}
where
\begin{equation}\label{sigm}
 \kappa_{w,\tau}(z):=
 w_0+\sum_{i=1}^k(1-\alpha_i)^{-1}w_i
 \ind_{[\tau_i,\infty)}(z).
\end{equation}
\end{cor}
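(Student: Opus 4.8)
The plan is to recognize that this corollary merely re-expresses the limiting distribution already obtained in Theorem~\ref{th-minmax}: it rewrites the Gaussian process $\bby(w,\tau)$ with covariance~(\ref{covg}) as a linear functional of the $F$-Brownian bridge $\bb_F$. Concretely, I would introduce the process
\[
\tilde{\bby}(w,\tau):=\int_{-\infty}^{+\infty}\kappa_{w,\tau}(z)\,\bb_F(z)\,dz,\qquad (w,\tau)\in\overline{\cW}\times\overline{\cT},
\]
with $\kappa_{w,\tau}$ as in~(\ref{sigm}), and show that $\tilde{\bby}$ is a centered Gaussian process with exactly the covariance~(\ref{covg}). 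Since a centered Gaussian process is determined in law by its covariance function, $\tilde{\bby}$ and $\bby$ then have the same distribution as processes on $\overline{\cW}\times\overline{\cT}$; applying the $1$-Lipschitz (hence continuous) functional $g\mapsto\sup_{w\in\overline{\cW}}\inf_{\tau\in\overline{\cT}}g(w,\tau)$ to both and invoking~(\ref{asex-1a}) yields~(\ref{asd-2}).

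The core computation is the covariance matching. Using the covariance kernel $\bbe[\bb_F(z)\bb_F(y)]=F(z\wedge y)-F(z)F(y)$ together with Fubini, I would write
\[
\cov\big(\tilde{\bby}(w,\tau),\tilde{\bby}(w',\tau')\big)=\int_{-\infty}^{+\infty}\!\!\int_{-\infty}^{+\infty}\kappa_{w,\tau}(z)\,\kappa_{w',\tau'}(y)\big[F(z\wedge y)-F(z)F(y)\big]\,dz\,dy.
\]
Expanding $\kappa_{w,\tau}$ and $\kappa_{w',\tau'}$ by their definitions and using bilinearity reduces this to a finite sum of integrals of the form $\int_a^{+\infty}\int_b^{+\infty}[F(x\wedge y)-F(x)F(y)]\,dx\,dy$ with thresholds $a,b\in\{-\infty,\tau_1,\dots,\tau_k\}$. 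For finite thresholds, identity~(\ref{covg-2}) identifies each such integral with $\cov_F([Z-a]_+,[Z-b]_+)$; for a threshold equal to $-\infty$ (the terms carrying the coefficient $w_0$ or $w_0'$), I would pass to the limit $a\downarrow-\infty$, noting that $[Z-a]_+-(Z-a)=[a-Z]_+\to 0$ in $L_2$ under $\bbe_F[Z^2]<+\infty$, so the corresponding integral converges to $\cov_F(Z,[Z-b]_+)$ (and to $\var_F(Z)$ in the doubly-infinite case), exactly as in Lemma~\ref{lm-eq}. Term by term these match the summands of~(\ref{covg}) with the coefficients $w_0$ and $w_i/(1-\alpha_i)$, so the two covariances coincide.

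The main technical obstacle is to make $\tilde{\bby}$ rigorously well defined and to justify the Fubini interchange, since the constant part $w_0$ of $\kappa_{w,\tau}$ forces an integral of $\bb_F$ over the whole line. Here the hypothesis $\bbe_F[Z^2]<+\infty$ is essential: it guarantees, via Lemma~\ref{lm-eq}, that $\int_{-\infty}^{+\infty}\int_{-\infty}^{+\infty}[F(x\wedge y)-F(x)F(y)]\,dx\,dy=\var_F(Z)<+\infty$, so that $\int\kappa_{w,\tau}(z)\bb_F(z)\,dz$ exists as an $L_2$ limit, is Gaussian, and Fubini applies to its second moments. Once $\tilde{\bby}$ is a well-defined centered Gaussian process with continuous sample paths in $(w,\tau)$ on the compact set $\overline{\cW}\times\overline{\cT}$, equality of covariances upgrades to equality of laws, and composing with the continuous $\sup$--$\inf$ functional completes the proof.
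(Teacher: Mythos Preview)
Your proposal is correct and follows essentially the same route as the paper's own proof: define $\tilde{\bby}(w,\tau)=\int\kappa_{w,\tau}(z)\bb_F(z)\,dz$, use Lemma~\ref{lm-eq} (and its covariance variant~(\ref{covg-2})) to identify its covariance with~(\ref{covg}), conclude equality in law with $\bby$, and invoke~(\ref{asex-1a}). The paper compresses all of this into two sentences; you have simply supplied the details the paper leaves to the phrase ``in a similar way it can be shown,'' including the well-definedness of the whole-line integral under the second-moment hypothesis.
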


\begin{proof}
By Lemma \ref{lm-eq} we have that $\int_{-\infty}^{+\infty}\ind_{[\tau,\infty)} (z)\bb_F(z)dz$ has the same variance as $[Z-\tau]_+$. In a similar way it can be shown that the covariance structure of the process
$\int_{-\infty}^{+\infty}\kappa_{w,\tau} (z)
  \bb_F(z)dz$ is the same as the process $\bby(w,\tau)$ in Theorem \ref{th-minmax}. Hence (\ref{asd-2}) follows from (\ref{asex-1a}). \hfill\fb
\end{proof}
\\

Recall that $\bb_F(z)=\bb (F(z))$, where $\bb$ is the standard Brownian bridge corresponding to the uniform distribution  on the interval [0,1]. Hence
\[
\inf_{\tau_i\in \cQ_F(\alpha_i)}
\int_{-\infty}^{+\infty}\ind_{[\tau_i,\infty)}(z)
  \bb_F(z)dz= \left\{
 \begin{array}{ccc}
  \int_{b_i}^{\infty}
 \bb (F(z))dz& \mbox{if} & \bb(\alpha_i)>0,\vspace{1mm} \\
 \int_{a_i}^{\infty}\bb (F(z))dz& \mbox{if} & \bb(\alpha_i)\le 0,
 \end{array}\right.
\]
where $[a_i,b_i]=\cQ_F(\alpha_i)$. Therefore the asymptotics  (\ref{asd-2}) can be written as
\begin{equation}\label{asd-3}
 N^{1/2}\left[\R(\hF_N)-
 \R(F)\right] \cdis \sup_{w\in \overline{\cW}}  \int_{0}^{1}\sigma_{w} (z)
  \bb(F(z))dz,
\end{equation}
where
\begin{equation}\label{sigm-2}
 \sigma_{w}(t):=
 w_0+\sum_{i=1}^k(1-\alpha_i)^{-1}w_i
 \ind_{[\tau_i,\infty)}(z)
\end{equation}
with $\tau_i=b_i$ if $\bb(\alpha_i)>0$, and $\tau_i=a_i$ if $\bb(\alpha_i)\le 0$. Note that unless all quantile sets $\cQ_F(\alpha_i)$, $i=1,...,k$, are singletons,  $\sigma_{w}$  depends on $\bb$.

In particular, if all quantile sets $\cQ_F(\alpha_i)$, $i=1,...,k$, are singletons, then by making change of variables $z=F(t)$ we can write
\begin{equation}\label{asd-4}
 N^{1/2}\left[\R(\hF_N)-
 \R(F)\right] \cdis \sup_{\sigma\in \overline{\Upsilon}}  \int_{0}^{1}\sigma(t)\bb(t)dF^{-1}(t),
\end{equation}
where $\overline{\Upsilon}$ is the set of maximizers in the corresponding representation (\ref{asy-1}).

\subsection{Asymptotics of the optimization problem}

 Consider optimization   problem (\ref{ropt-1}) and its sample counterpart (\ref{ropt-2}).  Suppose that $\R$ is of the form (\ref{kex-1}),
 the set $\X$ is nonempty convex compact,  $G(x,\xi)$ is convex in $x$ for all $\xi\in \Xi$, and $\bbe|G_x|<+\infty$ for all $x\in \X$. It follows that functions $g(x)$ and $\hat{g}_N(x)$ are convex and finite valued,  and hence the respective optimization problems (\ref{ropt-1}) and (\ref{ropt-2}) are convex. Since $\R$ is of the form (\ref{kex-1}),  the optimal value
 $\vv_*$ of problem (\ref{ropt-1}) can be written as
 \begin{eqnarray}
\label{optas-1}
\vv_* &=& \inf_{x\in \X}\sup_{w\in \cW}\left\{w_0\bbe[G_x]+\sum_{i=1}^k w_i\avr_{\alpha_i}(G_x)\right\}\\
\label{optas-2}
  &=&\sup_{w\in \cW} \inf_{x\in \X}\left\{w_0\bbe[G_x]+\sum_{i=1}^k w_i\avr_{\alpha_i}(G_x)\right\}.
 \end{eqnarray}
Let $\overline{\X}$ and $\overline{\cW}$ be the sets of optimal solutions of problems (\ref{optas-1}) and   (\ref{optas-2}), respectively.

We also can write
\begin{eqnarray}
\label{kex-4}
\vv_*& =& \inf_{(x,\tau)\in \X\times \bbr^k}\sup_{w\in \cW}\bbe[\phi(G_x,w,\tau)]\\
\label{kex-4a}
&=& \sup_{w\in \cW}\inf_{(x,\tau)\in \X\times \bbr^k}    \bbe[\phi(G_x,w,\tau)],
\end{eqnarray}
where function $\phi(\cdot,\cdot,\cdot)$ is defined in (\ref{fun-v}).
Denote $\Y:=\X\times \bbr^k$ and let $\overline{\Y}\subset \Y$ be the set of optimal solutions of problem (\ref{kex-4}). Assuming  that condition (A) holds, the set $\overline{\Y}$ is nonempty and compact.
The minimax problem  (\ref{kex-4})--(\ref{kex-4a}) is convex in $(x,\tau)\in \Y$ and concave (linear) in $w\in \bbr^k$. The set of saddle points of this minimax problem is $\overline{\cW}\times \overline{\Y}$.
The SAA problem for \eqref{kex-4} writes
\begin{equation}\label{saarisk}
\hat \vv_N =
\inf_{(x,\tau)\in \X\times \bbr^k}\sup_{w\in \cW} \frac{1}{N} \sum_{j=1}^N \phi\left (G(x,\xi_j),w, \tau\right ).
\end{equation}

The following theorem can be proved in a way similar to the proof of   Theorem \ref{th-minmax}.

\begin{theorem}
 \label{th-minopt}
 Suppose that: {\rm (i)} $\R$ is of the form {\rm (\ref{kex-1})}, {\rm (ii)} the set $\X$ is convex and $G(x,\xi)$ is convex in $x$, {\rm (iii)}
 condition {\rm (A)} holds, {\rm (iv)} $\bbe[G_x^2]$ is finite for some $x\in \X$,
 {\rm (v)} there is a measurable function $C(\xi)$ such that $\bbe[C(\xi)^2]$ is finite and
 \begin{equation}\label{lips}
 |G(x,\xi)-G(x',\xi)|\le C(\xi)\|x-x'\|,\;\forall
x,x'\in \X,\;\forall \xi\in \Xi.
 \end{equation}
 Then
 \begin{equation}\label{kex}
 \hat{\vv}_N=\inf_{(x,\tau)\in \overline{\Y}}\sup_{w\in \overline{\cW}}\left\{
 \frac{w_0}{N}\sum_{j=1}^NG(x,\xi_j)
 +
 \sum_{i=1}^k w_i\left(\tau_i+ \frac{1}{N(1-\alpha_i)}
 \sum_{j=1}^N[G(x,\xi_j)-\tau_i]_+\right)
 \right\}+o_p(N^{-1/2}).
\end{equation}
  Moreover, if the sets $\overline{\cW}=\{w_*\}$ and   $\overline{\Y}=\{(x_*,\tau_*)\}$ are singletons, then $N^{1/2}\big(\hat{\vv}_N-\vv_*\big)$ converges in distribution to normal $\N(0,\nu_*^2)$ with variance
 \begin{equation}\label{asex-2v}
\nu_*^2:=\var\left[\phi  (G_{x_*} ,w_*, \tau_*  )\right]=
\var\left\{w_{*0} G_{x_*}+\sum_{i=1}^k \frac{w_{*i}}{1-\alpha_i}
\big[G_{x_*}-  \tau_{*i}\big]_+\right\}.
 \end{equation}
 \end{theorem}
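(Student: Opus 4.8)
The plan is to follow the blueprint of the proof of Theorem \ref{th-minmax}, now carrying the decision variable $x$ alongside the dual variables $\tau$. The starting point is the convex--concave minimax representation (\ref{kex-4})--(\ref{kex-4a}) of $\vv_*$ and its sample analogue (\ref{saarisk}). Writing $\hat{\Phi}_N(x,\tau,w):=N^{-1}\sum_{j=1}^N\phi(G(x,\xi_j),w,\tau)$, I would view $\hat{\Phi}_N$ as a random element of the space $C(\X\times\cS\times\cW)$, establish a functional Central Limit Theorem for $N^{1/2}[\hat{\Phi}_N-\bbe\,\phi]$ on this space, and then invoke the finite-dimensional minimax SAA asymptotics theorem (cf. \cite{sha08}, \cite[Section 5.1.4]{SDR}) exactly as for Theorem \ref{th-minmax}.

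First I would record the structural facts. For fixed $w$ with nonnegative entries, $\phi(z,w,\tau)$ is nondecreasing and convex in $z$ and jointly convex in $(z,\tau)$; composing with $G(\cdot,\xi)$ convex (assumption (ii)) shows that $\phi(G(x,\xi),w,\tau)$ is convex in $(x,\tau)$ and linear in $w$, so (\ref{kex-4})--(\ref{kex-4a}) is convex--concave with saddle set $\overline{\cW}\times\overline{\Y}$, which is nonempty and compact under condition (A). Standard consistency arguments (as in Theorem \ref{th-3.3}) show that any SAA minimizer $(\hat{x}_N,\hat{\tau}_N)$ converges w.p.1 to $\overline{\Y}$; since $\X$ is compact and the $\tau$-minimization is never optimal for $\|\tau\|$ large, the minimization over $\Y=\X\times\bbr^k$ can be restricted asymptotically to $\X\times\cS$ for a compact $\cS\subset\bbr^k$ containing the relevant quantiles in its interior.

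The analytic core is the functional CLT, and this is exactly where the new hypothesis (\ref{lips}) enters, since in Theorem \ref{th-minmax} the integrand did not depend on $x$. As $\phi$ is Lipschitz in $z$ with constant bounded uniformly over the compact set $\cW$, the estimate (\ref{lips}) yields, for all $(x,\tau,w),(x',\tau',w')$ in the compact index set,
\[
|\phi(G(x,\xi),w,\tau)-\phi(G(x',\xi),w',\tau')|\le \tilde{C}(\xi)\big(\|x-x'\|+\|\tau-\tau'\|+\|w-w'\|\big),
\]
with $\tilde{C}(\xi)$ built from $C(\xi)$ and $|G(x_0,\xi)|$ for a fixed $x_0\in\X$; assumptions (iv) and (v) together with boundedness of $\X$ give $\bbe[\tilde{C}(\xi)^2]<+\infty$ and $\bbe[G_x^2]<+\infty$ for every $x$. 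This square-integrable Lipschitz envelope is precisely what makes $N^{1/2}[\hat{\Phi}_N-\bbe\,\phi]$ converge weakly in $C(\X\times\cS\times\cW)$ to a Gaussian process with covariance $\cov_F(\phi(G_x,w,\tau),\phi(G_{x'},w',\tau'))$ (cf. \cite[Example 19.7]{vaart}). I expect verifying this Donsker-type property uniformly over the joint primal--dual index set to be the main obstacle.

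With the functional CLT established, the minimax SAA asymptotics theorem gives the expansion (\ref{kex}), the sup--inf being over the saddle set $\overline{\cW}\times\overline{\Y}$. If finally $\overline{\cW}=\{w_*\}$ and $\overline{\Y}=\{(x_*,\tau_*)\}$ are singletons, the right-hand side of (\ref{kex}) collapses to $N^{-1}\sum_{j=1}^N\phi(G(x_*,\xi_j),w_*,\tau_*)$, whose expectation equals $\vv_*$ because $(x_*,\tau_*,w_*)$ is a saddle point; the ordinary CLT then yields $N^{1/2}(\hat{\vv}_N-\vv_*)\cdis\N(0,\nu_*^2)$ with $\nu_*^2=\var[\phi(G_{x_*},w_*,\tau_*)]$, which is (\ref{asex-2v}).
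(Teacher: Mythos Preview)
Your proposal is correct and is precisely the approach the paper intends: the paper does not give a separate proof of Theorem \ref{th-minopt} but simply states that it ``can be proved in a way similar to the proof of Theorem \ref{th-minmax}''. Your write-up faithfully carries out that program, with the Lipschitz condition (\ref{lips}) and the second-moment assumption (iv) supplying the square-integrable envelope needed for the functional CLT over the enlarged index set $\X\times\cS\times\cW$, after which the minimax SAA asymptotics theorem from \cite{sha08} yields (\ref{kex}) and (\ref{asex-2v}) exactly as in Theorem \ref{th-minmax}.
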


Let us discuss now estimation of the variance $\nu_*^2$ given in (\ref{asex-2v}). Let $(\hat{x}_N,\hat{\tau}_N,\hat{w}_N)$ be a saddle point of the SAA problem (\ref{saarisk}). Suppose that the sets $\overline{\cW}=\{w_*\}$ and   $\overline{\Y}=\{(x_*,\tau_*)\}$ are singletons. Since the sets $\Y$ and $\cW$ are convex and   the function $\phi\left (G(x,\xi),w, \tau\right )$ is convex in $(x,\tau)$ and concave (linear) in $w$, it follows that $(\hat{x}_N,\hat{\tau}_N)$ converges w.p.1 to
$(x_*,\tau_*)$ and $\hat{w}_N$ converges w.p.1 to $w_*$ as $N\to\infty$ (e.g., \cite[Theorem 5.4]{SDR}). It follows that the variance $\nu_*^2$  can be consistently estimated by its sample counterpart, i.e., the estimator
\begin{equation}\label{varsaa}
\hat{\nu}^2_N=\frac{1}{N-1}\sum_{j=1}^N
\left[\phi  (G(\hat{x}_N,\xi_j) ,\hat{w}_N, \hat{\tau}_N)-\frac{1}{N}\sum_{j=1}^N \phi  (G(\hat{x}_N,\xi_j) ,\hat{w}_N, \hat{\tau}_N)\right]^2
\end{equation}
converges w.p.1 to $\nu_*^2$.
 Then employing Slutsky's theorem we obtain  that under the assumptions of Theorem \ref{th-minopt}, it follows that
\begin{equation}\label{conv1estimatedvariance}
\frac{N^{1/2} \big( \hat{\vv}_N - \vv_* \big) }  {\hat \nu_N} \cdis \mathcal{N}(0,1).
\end{equation}

\section{Hypotheses testing} \label{testssection}

On the basis of samples $\xi^{N,i}=(\xi_{1}^i, \ldots, \xi_{N}^i)$ of $\xi^i$ for $i=1,\ldots,K$,
we propose nonasymptotic rejection regions for tests \eqref{deftest1} and \eqref{deftest2} (in Section \ref{natest})
and asymptotic rejection regions for tests \eqref{deftest1}, \eqref{testcontregalite}, and \eqref{deftest2}
(in Section \ref{astest}).
For the nonasymptotic tests, we show that the probability of type II error can be controlled under some assumptions.
We will denote by $0<\beta<1$ the maximal type I error.

\subsection{Nonasymptotic tests} \label{natest}

\subsubsection{Risk-neutral case}

Let us consider $K \geq 2$ optimization problems of the form \eqref{ropt-1} with 
$\mathcal{R}:=\mathbb{E}$ the expectation.
In this situation, several papers have derived nonasymptotic confidence intervals on the optimal value of
\eqref{ropt-1}: \cite{pflug99} using Talagrand
inequality (\cite{talagrand1994}, \cite{talagrand1996}),
\cite{shap2000}, \cite{ioudnemgui15} using large-deviation type results,
and \cite{nemjudlannem09}, \cite{nemlansh09}, \cite{guigues2016} using
Robust Stochastic Approximation (RSA) \cite{polyak90}, \cite{polyakjud92}, Stochastic
Mirror Descent (SMD) \cite{nemjudlannem09} and variants of SMD. In all cases, the confidence interval depends on a sample
$\xi^N=(\xi_1,\ldots,\xi_N)$ of $\xi$ and of parameters. For instance,
the confidence interval $[{\tt{Low}}(\Theta_2, \Theta_3, N), {\tt{Up}}(\Theta_1, N)]$ with confidence level
$1-\beta$
from \cite{guigues2016}
obtained using RSA
depends on parameters $\Theta_1=2\sqrt{\ln(2/\beta)}$, $\Theta_3=2\sqrt{\ln(4/\beta)}$,
$\Theta_2$ satisfying $e^{1-\Theta_2^2} + e^{-\Theta_2^2/4}=\frac{\beta}{4}$, and
$L, M_1, M_2, D(\X )$ with $D( \X )$ the maximal Euclidean distance in $\X$ to $x_1$ (the initial point of the RSA algorithm),
$L$ a uniform upper bound on $\X$ on the $\|\cdot\|_2$-norm of some selection (say, selection $g'(x) \in \partial g(x)$ at $x$) of subgradients of $g$,
and $M_1, M_2<+\infty$ such that for all $x\in \X$ it holds
\begin{equation}\label{asss}
\begin{array}{lrcl}
(a)&\mathbb{E}\Big[ (G(x, \xi) - g(x)   )^2 \Big] &\leq& M_1^2, \vspace*{0.1cm} \\
(b)&\mathbb{E} \Big[ \| G_x'(x, \xi) - \mathbb{E}[ G_x'(x, \xi) ]  \|_2^2  \Big] &\leq & M_2^2,
\end{array}
\end{equation}
for some selection $G_x'(x, \xi)$ belonging to the subdifferential $\partial_x G(x, \xi)$.

With this notation, on the basis of a sample
$\xi^N=(\xi_1, \ldots,\xi_N)$
of size $N$ of $\xi$ and of the trajectory $x_1, \dots,x_N$ of the RSA
algorithm, setting
\begin{equation}\label{defab}
a(\Theta, N) =\frac{\Theta M_1}{\sqrt{N}} \mbox{ and }b(\Theta, \X, N)=\frac{K_1( \X ) + \Theta(K_2( \X )-M_1)}{\sqrt{N}},
\end{equation}
where the constants $K_1( \X )$ and $K_2( \X )$ are given by
$$
K_1( \X )=\frac{D( \X ) (M_2^2+2L^2)}{\sqrt{2 (M_2^2 + L^2 )}}
\mbox{ and }K_2( \X )=\frac{D( \X ) M_2^2}{\sqrt{2 (M_2^2 + L^2)}}+2 D( \X ) M_2 + M_1,
$$
the lower bound
${\tt{Low}}(\Theta_2, \Theta_3, N)$ is
\begin{equation} \label{lowsmd1}
{\tt{Low}}(\Theta_2, \Theta_3, N) = \frac{1}{N} \sum_{t=1}^N G(x_t, \xi_t)  - b(\Theta_2, \X, N)-a(\Theta_3, N),
\end{equation}
and the upper bound ${\tt{Up}}(\Theta_1, N)$ is
\begin{equation}\label{upsmd1}
{\tt{Up}}(\Theta_1, N) = \frac{1}{N} \sum_{t=1}^N G(x_t, \xi_t) + a(\Theta_1, N).
\end{equation}
More precisely, we have $\mathbb{P}(\vv_* < {\tt{Low}}(\Theta_2, \Theta_3, N)) \leq \beta/2$
and $\mathbb{P}(\vv_* > {\tt{Up}}(\Theta_1, N) \leq \beta/2$.\\

\par {\textbf{Test \eqref{deftest1}-(a).}} Using these bounds ${\tt{Low}}$ and ${\tt{Up}}$ or one of the aforementioned cited procedures, we can determine for optimization problem $i \in \{1,\ldots,K\}$
(stochastic) lower and upper bounds on $\vv_*^i$ that we will denote by ${\tt{Low}}_i$ and  ${\tt{Up}}_i$ respectively for short,
such that $\mathcal{P}(\vv_*^i < {\tt{Low}}_i ) \leq \beta/2K$
and $\mathcal{P}(\vv_*^i > {\tt{Up}}_i ) \leq \beta/2K$.

We define for test \eqref{deftest1}-(a) the rejection region $\mathcal{W}_{\eqref{deftest1}-(a)}$ to be the set of samples
such that the realizations of the confidence intervals $\Big[{\tt{Low}}_i, {\tt{Up}}_i  \Big], i=1, \ldots,K$, on the optimal values have no intersection, i.e.,
$$
\begin{array}{lll}
\mathcal{W}_{\eqref{deftest1}-(a)} & = & \left\{(\xi^{N,1}, \ldots, \xi^{N,K}) \;:\; \displaystyle \bigcap_{i=1}^{K} \;\Big[ {\tt{Low}}_i, {\tt{Up}}_i  \Big] = \emptyset \right\} \\& = &
\left\{(\xi^{N,1}, \ldots, \xi^{N,K}) \;:\; \;\displaystyle \max_{i=1,\ldots,K}\; {\tt{Low}}_i  > \displaystyle \min_{i=1,\ldots,K}\; {\tt{Up}}_i  \right\}.
\end{array}
$$
If $H_0$ holds, denoting $\vv_{*} = \vv_{*}^1= \vv_{*}^2 = \ldots = \vv_{*}^K$, we have
$$
\begin{array}{l}
\mathbb{P}\Big(\displaystyle \max_{i=1,\ldots,K}\; {\tt{Low}}_i  >\displaystyle \min_{i=1,\ldots,K}\; {\tt{Up}}_i     \Big)=
\mathbb{P}\left(\displaystyle  \max_{i=1,\ldots,K}\; \Big[ {\tt{Low}}_i - \vv_{*}    \Big]+\displaystyle \max_{i=1,\ldots,K}\; \Big[\vv_{*}-{\tt{Up}}_i   \Big] >0 \right)\\
\leq \sum_{i=1}^K \left[ \mathbb{P}\Big( {\tt{Low}}_i - \vv_{*}^i  >0 \Big) + \mathbb{P}\Big( \vv_{*}^i -{\tt{Up}}_i >0\Big) \right]
  \leq  \beta
\end{array}
$$
and $\mathcal{W}_{\eqref{deftest1}-(a)}$ is a rejection region for \eqref{deftest1}-(a) yielding a type I error of at most $\beta$.
Moreover, as stated in the following lemma, if $H_0$ does not hold and if two optimal values are sufficiently distant then
the probability to accept
$H_0$ will be small:
\begin{lemma}\label{tIItest1} Consider test \eqref{deftest1}-(a) with rejection region $\mathcal{W}_{\eqref{deftest1}-(a)}$.
If for some $i, j \in \{1,\ldots,K\}$ with $i \neq j$, we have almost surely
$
\vv_{*}^i > \vv_{*}^j  + {\tt{Up}}_i - {\tt{Low}}_i + {\tt{Up}}_j -  {\tt{Low}}_j
$
then the probability to accept $H_0$ is not larger than  $\frac{\beta}{K}$.
\end{lemma}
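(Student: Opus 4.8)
The plan is to bound the probability of the \emph{acceptance} event (the complement of $\mathcal{W}_{\eqref{deftest1}-(a)}$) by showing that, under the stated separation assumption, it is contained almost surely in the union of just two ``miscoverage'' events, each of which already has probability at most $\beta/2K$ by construction of the bounds ${\tt{Low}}_i,{\tt{Up}}_i$.

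First I would reduce the acceptance event to a single scalar inequality involving the fixed pair $i\neq j$ appearing in the statement. Accepting $H_0$ means the sample lies outside $\mathcal{W}_{\eqref{deftest1}-(a)}$, i.e. $\max_{l} {\tt{Low}}_l \le \min_{l} {\tt{Up}}_l$. Since ${\tt{Low}}_i \le \max_l {\tt{Low}}_l$ and $\min_l {\tt{Up}}_l \le {\tt{Up}}_j$, the acceptance event is contained in $\{{\tt{Low}}_i \le {\tt{Up}}_j\}$, so it suffices to bound $\mathbb{P}({\tt{Low}}_i \le {\tt{Up}}_j)$.

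The key step is a pathwise inclusion exploiting the hypothesis $\vv_*^i > \vv_*^j + ({\tt{Up}}_i-{\tt{Low}}_i)+({\tt{Up}}_j-{\tt{Low}}_j)$, which holds almost surely. I would argue by contraposition on each sample path: if both coverage events $\{\vv_*^i \le {\tt{Up}}_i\}$ and $\{\vv_*^j \ge {\tt{Low}}_j\}$ held, then $\vv_*^i - \vv_*^j \le {\tt{Up}}_i - {\tt{Low}}_j$; combining this with the separation inequality and cancelling ${\tt{Up}}_i$ on both sides would force ${\tt{Low}}_i > {\tt{Up}}_j$. Hence, on the almost-sure event where the assumption holds,
$$
\{{\tt{Low}}_i \le {\tt{Up}}_j\} \;\subseteq\; \{\vv_*^i > {\tt{Up}}_i\} \cup \{\vv_*^j < {\tt{Low}}_j\}.
$$

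Finally I would apply the union bound together with the coverage guarantees $\mathbb{P}(\vv_*^i > {\tt{Up}}_i) \le \beta/2K$ and $\mathbb{P}(\vv_*^j < {\tt{Low}}_j) \le \beta/2K$, obtaining $\mathbb{P}(\text{accept } H_0) \le \mathbb{P}({\tt{Low}}_i \le {\tt{Up}}_j) \le \beta/K$. There is no hard estimate here; the only subtlety is that the interval widths ${\tt{Up}}_i-{\tt{Low}}_i$ are themselves random, so the rearrangement must be carried out sample-path-wise using the almost-sure form of the assumption rather than with deterministic widths. Selecting the right pair $(i,j)$ to expose, and observing that only the two associated miscoverage events enter the union bound (rather than all $2K$ of them), is precisely what keeps the type II bound at $\beta/K$ instead of $\beta$.
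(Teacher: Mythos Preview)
Your proposal is correct and follows essentially the same argument as the paper. The only cosmetic difference is that the paper bounds $\mathbb{P}(\text{reject }H_0)$ from below via the Bonferroni inequality $\mathbb{P}(A\cap B)\ge \mathbb{P}(A)+\mathbb{P}(B)-1$, whereas you bound $\mathbb{P}(\text{accept }H_0)$ from above via the union bound on the complementary events; the pathwise implication you derive is exactly the contrapositive of the paper's implication~\eqref{eq1typeIItest}.
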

\begin{proof} We first check that
\begin{equation}\label{eq1typeIItest}
\left\{
\begin{array}{ll}
\vv_{*}^i > \vv_{*}^j + {\tt{Up}}_j - {\tt{Low}}_j + {\tt{Up}}_i - {\tt{Low}}_i  & (a)\\
{\tt{Low}}_j \leq \vv_{*}^j  & (b)\\
\vv_{*}^i \leq {\tt{Up}}_i & (c)
\end{array}
\right\}
\Rightarrow
{\tt{Up}}_j  < {\tt{Low}}_i.
\end{equation}
Indeed, if \eqref{eq1typeIItest}-(a), (b), and (c) hold then
$$
{\tt{Up}}_j = {\tt{Low}}_j + {\tt{Up}}_j - {\tt{Low}}_j
\stackrel{\eqref{eq1typeIItest}-(b)}{\leq} \vv_{*}^j + {\tt{Up}}_j  - {\tt{Low}}_j
\stackrel{\eqref{eq1typeIItest}-(a)}{<} \vv_{*}^i + {\tt{Low}}_i - {\tt{Up}}_i \stackrel{\eqref{eq1typeIItest}-(c)}{\leq} {\tt{Low}}_i.
$$
Assume now that $\vv_{*}^i > \vv_{*}^j + {\tt{Up}}_j - {\tt{Low}}_j + {\tt{Up}}_i - {\tt{Low}}_i$.
Since
${\tt{Up}}_j  < {\tt{Low}}_i$ implies that $H_0$ is rejected, we get
$$
\begin{array}{lcl}
\mathbb{P}\Big(\mbox{reject }H_0\Big)& \geq  &  \mathbb{P}\Big( {\tt{Up}}_j  < {\tt{Low}}_i \Big) \stackrel{\eqref{eq1typeIItest}}{\geq}    \mathbb{P}\Big( \Big\{ {\tt{Low}}_j \leq \vv_{*}^j \Big\} \bigcap \Big\{\vv_{*}^i \leq {\tt{Up}}_i \Big\} \Big) \\
& \geq  &  \mathbb{P}\Big( {\tt{Low}}_j \leq \vv_{*}^j \Big) + \mathbb{P}\Big( \vv_{*}^i \leq {\tt{Up}}_i  \Big) -1 \geq 1-\frac{\beta}{K}
\end{array}
$$
which achieves the proof of the lemma.\hfill \fb\\
\end{proof}

\par {\textbf{Test \eqref{deftest1}-(b).}} We now consider the test
$$
H_0^i :\; \vv_{*}^i \leq  \vv_{*}^j  \mbox{ for }1 \leq j \neq i \leq K  \mbox{ against unrestricted }{H_1^i}.
$$
Let $[{\tt{Low}}_i, {\tt{Up}}_i]$ be a confidence interval with confidence level at least $1-\beta/2(K-1)$ for problem $i$:
\begin{equation}\label{testbconfi}
\bbp(\vv_*^i < {\tt{Low}}_i ) \leq \beta/2(K-1)\mbox{ and }\bbp(\vv_*^i > {\tt{Up}}_i ) \leq \beta/2(K-1).
\end{equation}
We define for test \eqref{deftest1}-(b) the rejection region
$$
\begin{array}{lll}
\mathcal{W}_{\eqref{deftest1}-(b)} & = & \left\{(\xi^{N,1}, \ldots, \xi^{N,K}) \;:\; \displaystyle \exists   1 \leq j \neq i  \leq K \mbox{ such that }{\tt{Low}}_i>{\tt{Up}}_j\right\}.
\end{array}
$$
If $H_0$ holds, we have
$$
\begin{array}{l}
\bbp\Big(\displaystyle \exists  \;1 \leq j \neq i  \leq K\;:\;  {\tt{Low}}_i>{\tt{Up}}_j \Big) \leq \sum_{1 \leq j \neq i \leq K}\; \mathbb{P}\Big( {\tt{Low}}_i>{\tt{Up}}_j \Big) \\
\leq  \sum_{1 \leq j \neq i \leq K}\; \mathbb{P}\Big({\tt{Low}}_i- \vv_{*}^i + \vv_{*}^j - {\tt{Up}}_j>0 \Big)\\
\leq \sum_{1 \leq j \neq i \leq K}\; \left( \mathbb{P}\Big( {\tt{Low}}_i- \vv_{*}^i >0  \Big) + \mathbb{P}\Big( \vv_{*}^j - {\tt{Up}}_j > 0\Big)   \right)
\leq  \beta
\end{array}
$$
and $\mathcal{W}_{\eqref{deftest1}-(b)}$ is a rejection region for \eqref{deftest1}-(b) yielding a type I error of at most $\beta$.
We also have an analog of Lemma \ref{tIItest1}:
\begin{lemma}\label{tIItest2} Consider test \eqref{deftest1}-(b) with rejection region $\mathcal{W}_{\eqref{deftest1}-(b)}$.
If for some $j \in \{1,\ldots,K\}$ with $i \neq j$, we have almost surely
$\vv_{*}^i > \vv_{*}^j + {\tt{Up}}_i - {\tt{Low}}_i + {\tt{Up}}_j - {\tt{Low}}_j$
then the probability to accept $H_0$ is not larger than $\frac{\beta}{K-1}$.
\end{lemma}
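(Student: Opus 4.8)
The plan is to transcribe the argument of Lemma~\ref{tIItest1}, exploiting the fact that the rejection region $\mathcal{W}_{\eqref{deftest1}-(b)}$ is triggered by precisely the same type of event, namely ${\tt{Low}}_i > {\tt{Up}}_j$ for some $j \neq i$. Here the index $i$ is the fixed index of the null $H_0^i$, and $j$ is the witness index supplied by the hypothesis; the only numerical change from the previous lemma is that the individual confidence intervals now carry level $1-\beta/2(K-1)$ in place of $1-\beta/2K$.

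First I would invoke the deterministic implication \eqref{eq1typeIItest}, which is already in the required form: from the gap hypothesis $\vv_{*}^i > \vv_{*}^j + {\tt{Up}}_i - {\tt{Low}}_i + {\tt{Up}}_j - {\tt{Low}}_j$ together with the two coverage events ${\tt{Low}}_j \leq \vv_{*}^j$ and $\vv_{*}^i \leq {\tt{Up}}_i$, the same telescoping chain
\[
{\tt{Up}}_j = {\tt{Low}}_j + ({\tt{Up}}_j - {\tt{Low}}_j) \leq \vv_{*}^j + ({\tt{Up}}_j - {\tt{Low}}_j) < \vv_{*}^i - ({\tt{Up}}_i - {\tt{Low}}_i) \leq {\tt{Low}}_i
\]
yields ${\tt{Up}}_j < {\tt{Low}}_i$. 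Since a single pair $(i,j)$ with ${\tt{Low}}_i > {\tt{Up}}_j$ already places the sample in $\mathcal{W}_{\eqref{deftest1}-(b)}$, the rejection event contains the intersection $\{{\tt{Low}}_j \leq \vv_{*}^j\} \cap \{\vv_{*}^i \leq {\tt{Up}}_i\}$.

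Next I would bound the probability of rejection from below. By the inclusion just noted and the Fr\'echet/Bonferroni inequality,
\[
\mathbb{P}(\mbox{reject } H_0) \geq \mathbb{P}\big(\{{\tt{Low}}_j \leq \vv_{*}^j\} \cap \{\vv_{*}^i \leq {\tt{Up}}_i\}\big) \geq \mathbb{P}({\tt{Low}}_j \leq \vv_{*}^j) + \mathbb{P}(\vv_{*}^i \leq {\tt{Up}}_i) - 1.
\]
Substituting the two one-sided coverage bounds from \eqref{testbconfi}, each of these probabilities is at least $1-\beta/2(K-1)$, so the right-hand side is at least $1-\beta/(K-1)$; passing to the complement gives acceptance probability at most $\beta/(K-1)$, as claimed.

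Since the whole argument copies Lemma~\ref{tIItest1}, I foresee no real obstacle. The only point to watch is the accounting of the confidence level: \eqref{testbconfi} must be applied with index $j$ for the lower bound and index $i$ for the upper bound, so that the two one-sided failure probabilities each equal $\beta/2(K-1)$ and sum to $\beta/(K-1)$.
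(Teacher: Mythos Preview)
Your proposal is correct and follows exactly the approach the paper intends: the paper's own proof simply states that the argument is analogous to that of Lemma~\ref{tIItest1}, and you have spelled out precisely that analogy, including the use of \eqref{eq1typeIItest}, the Bonferroni bound, and the adjusted confidence level $\beta/2(K-1)$ from \eqref{testbconfi}.
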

\begin{proof} The proof is analogue to the proof of Lemma \ref{tIItest1}.\hfill \fb\\
\end{proof}
\par {\textbf{Test \eqref{deftest1}-(c).}}
Consider test \eqref{deftest1}-(c):
$$
\begin{array}{ll}
H_0 :\; \vv_{*}^1 \leq \vv_{*}^2  \leq  \ldots \leq  \vv_{*}^K \hspace*{-0.3cm}  & \mbox{ against unrestricted }{H_1}.
\end{array}
$$
Let $[{\tt{Low}}_i, {\tt{Up}}_i]$  be a confidence interval on $\vv_*^i$ satisfying \eqref{testbconfi}.
We define the rejection region
$$
\begin{array}{lll}
\mathcal{W}_{\eqref{deftest1}-(c)} & = & \left\{(\xi^{N,1}, \ldots, \xi^{N,K}) \;:\; \displaystyle \exists i \in \{1,\ldots,K-1\} \mbox{ such that }  {\tt{Low}}_i>{\tt{Up}}_{i+1}\right\}.
\end{array}
$$
If $H_0$ holds, we have
$$
\begin{array}{l}
\mathbb{P}\Big(\displaystyle \exists i \in \{1,\ldots,K-1\} \;:\; {\tt{Low}}_i>{\tt{Up}}_{i+1} \Big) \leq \sum_{i=1}^{K-1} \; \mathbb{P}\Big( {\tt{Low}}_i -\vv_{*}^i + \vv_{*}^{i+1} - {\tt{Up}}_{i+1} > 0\Big)\\
\leq \displaystyle \sum_{i=1}^{K-1} \;  \mathbb{P}\Big( {\tt{Low}}_i -\vv_{*}^i >0 \Big)  + \mathbb{P}\Big( \vv_{*}^{i+1} - {\tt{Up}}_{i+1} > 0 \Big) \leq \beta
\end{array}
$$
and $\mathcal{W}_{\eqref{deftest1}-(c)}$ is a rejection region for \eqref{deftest1}-(c) yielding a type I error of at most $\beta$.
As for test \eqref{deftest1}-(a), we can bound from above the probability of type II error under some assumptions:
\begin{lemma}\label{tIItest3} Consider test \eqref{deftest1}-(c) with rejection region $\mathcal{W}_{\eqref{deftest1}-(c)}$.
If for some $i \in \{1,\ldots,K-1\}$ we have almost surely
$\vv_{*}^i > \vv_{*}^{i+1} + {\tt{Up}}_i - {\tt{Low}}_i + {\tt{Up}}_{i+1} - {\tt{Low}}_{i+1}$
then the probability to accept $H_0$ is not larger than $\frac{\beta}{K-1}$.
\end{lemma}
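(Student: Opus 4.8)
The plan is to follow the template of Lemma~\ref{tIItest1} essentially verbatim, replacing the index pair $(j,i)$ used there by $(i+1,i)$ and noting that the confidence intervals here are calibrated at per-side level $\beta/2(K-1)$, as prescribed by \eqref{testbconfi}. The idea is to exhibit a deterministic event that forces the sample into the rejection region $\mathcal{W}_{\eqref{deftest1}-(c)}$, and then to bound the probability of that event from below via the two coverage guarantees.

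First I would prove the analog of implication \eqref{eq1typeIItest}, namely that under the separation hypothesis
$$
\vv_{*}^i > \vv_{*}^{i+1} + {\tt{Up}}_i - {\tt{Low}}_i + {\tt{Up}}_{i+1} - {\tt{Low}}_{i+1},
$$
the joint occurrence of the two coverage events $\{{\tt{Low}}_{i+1} \leq \vv_{*}^{i+1}\}$ and $\{\vv_{*}^i \leq {\tt{Up}}_i\}$ implies ${\tt{Up}}_{i+1} < {\tt{Low}}_i$. This is the chain
$$
{\tt{Up}}_{i+1} = {\tt{Low}}_{i+1} + ({\tt{Up}}_{i+1} - {\tt{Low}}_{i+1}) \leq \vv_{*}^{i+1} + ({\tt{Up}}_{i+1} - {\tt{Low}}_{i+1}) < \vv_{*}^i - ({\tt{Up}}_i - {\tt{Low}}_i) \leq {\tt{Low}}_i,
$$
where the first inequality uses coverage of index $i+1$, the strict inequality uses the separation hypothesis, and the last uses coverage of index $i$.

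Next, since $\{{\tt{Low}}_i > {\tt{Up}}_{i+1}\}$ is precisely a witness (with running index $i$) that the sample belongs to $\mathcal{W}_{\eqref{deftest1}-(c)}$, this event entails rejection of $H_0$. Therefore
$$
\mathbb{P}\big(\text{reject } H_0\big) \geq \mathbb{P}\big({\tt{Up}}_{i+1} < {\tt{Low}}_i\big) \geq \mathbb{P}\Big(\{{\tt{Low}}_{i+1} \leq \vv_{*}^{i+1}\} \cap \{\vv_{*}^i \leq {\tt{Up}}_i\}\Big),
$$
and a Bonferroni bound renders the right-hand side at least $\mathbb{P}({\tt{Low}}_{i+1} \leq \vv_{*}^{i+1}) + \mathbb{P}(\vv_{*}^i \leq {\tt{Up}}_i) - 1$. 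Invoking the two coverage bounds in \eqref{testbconfi}, each at level $\beta/2(K-1)$, this is at least $1 - \beta/(K-1)$; passing to complements yields $\mathbb{P}(\text{accept } H_0) \leq \beta/(K-1)$.

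No genuine obstacle arises: the argument is an index-shifted instance of Lemma~\ref{tIItest1}. The only point deserving attention is the calibration of the confidence level---because tests \eqref{deftest1}-(b) and (c) use intervals at per-side level $\beta/2(K-1)$ rather than the $\beta/2K$ of test \eqref{deftest1}-(a), the conclusion carries the denominator $K-1$ instead of $K$.
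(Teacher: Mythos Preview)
Your proposal is correct and is precisely the argument the paper has in mind: the paper's own proof consists of the single sentence ``The proof is analogue to the proof of Lemma~\ref{tIItest1},'' and you have carried out that analogy exactly, with the correct index shift $(j,i)\mapsto(i+1,i)$ and the correct per-side confidence level $\beta/2(K-1)$ from \eqref{testbconfi}.
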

\begin{proof} The proof is analogue to the proof of Lemma \ref{tIItest1}.\hfill \fb\\
\end{proof}

\begin{rem} Though ${\tt{Low}}$ and ${\tt{Up}}$ are stochastic,
for bounds \eqref{lowsmd1} and \eqref{upsmd1}, the difference
${\tt{Up}}$-${\tt{Low}}=a(\Theta_1, N) + b(\Theta_2, \X, N)+a(\Theta_3, N)$ is deterministic
and inequality
$\vv_{*}^i > \vv_{*}^j  + {\tt{Up}}_i - {\tt{Low}}_i + {\tt{Up}}_j -  {\tt{Low}}_j $
in Lemmas \ref{tIItest1} and \ref{tIItest2} is deterministic too.
With this choice of lower and upper bounds, inequality
$\vv_{*}^i > \vv_{*}^{i+1} + {\tt{Up}}_i - {\tt{Low}}_i + {\tt{Up}}_{i+1} - {\tt{Low}}_{i+1}$
in Lemma \ref{tIItest3} is also deterministic.
\end{rem}

\par {\textbf{Tests \eqref{deftest2}.}} For tests
\begin{equation}\label{testVrho0}
\begin{array}{lll}
H_0:\;\vv_{*}=\rho_0  \hspace*{-0.3cm}  & \mbox{ against }{H_1}: \;\vv_{*} \neq \rho_0 & (a)\\
H_0:\; \vv_{*}\leq \rho_0 \hspace*{-0.3cm}  & \mbox{ against }{H_1}: \;\vv_{*} > \rho_0 & (b)\\
H_0:\; \vv_{*}\geq \rho_0 \hspace*{-0.3cm}  & \mbox{ against }{H_1}: \;\vv_{*}< \rho_0, & (c)
\end{array}
\end{equation}
we define rejection regions which are respectively of the form
$$
\begin{array}{lll}
\mathcal{W}_{\eqref{testVrho0}-(a)} & = & \left\{(\xi_{1}, \ldots, \xi_{N}) \;:\; \Big\{ \rho_0 > {\tt{Up}} \Big\}  \bigcup \Big\{ \rho_0 < {\tt{Low}} \Big\}  \right\},\\
\mathcal{W}_{\eqref{testVrho0}-(b)} & = & \left\{(\xi_{1}, \ldots, \xi_{N}) \;:\;  \rho_0 < {\tt{Low}} \right\},\\
\mathcal{W}_{\eqref{testVrho0}-(c)} & =&  \left\{(\xi_{1}, \ldots, \xi_{N}) \;:\; \rho_0 > {\tt{Up}}  \right\}.
\end{array}
$$
To ensure a type I error of at most $\beta$, the confidence interval
$[{\tt{Low}}, {\tt{Up}}]$ on $\vv_*$ satisfies
(i) $\mathbb{P}(\vv_* > {\tt{Up}} ) \leq \beta/2$ and $\mathbb{P}(\vv_* < {\tt{Low}} ) \leq \beta/2$ for test \eqref{testVrho0}-(a), (ii)
$\mathbb{P}(\vv_* < {\tt{Low}} ) \leq \beta$ for test \eqref{testVrho0}-(b), and (iii) $\mathbb{P}(\vv_* > {\tt{Up}} ) \leq \beta$ for
test \eqref{testVrho0}-(c).

\subsubsection{Risk averse case}

Consider $K \geq 2$ optimization problems of the form \eqref{ropt-1}.
For such problems, nonasymptotic confidence intervals $[{\tt{Low}}, {\tt{Up}}]$ on the optimal value $\vv_*$ were derived in
\cite{guigues2016} and  \cite{nemlansh09} using RSA and SMD,
taking for $\mathcal{R}$ an extended polyhedral risk measure (introduced in \cite{guiguesromisch10}) in
\cite{guigues2016} and $\mathcal{R}=\avr_\alpha$ and $G(x,\xi)=\xi^T x$
in \cite{nemlansh09}. With such confidence intervals at hand, we can use the developments of the previous section for testing hypotheses \eqref{deftest1} and \eqref{deftest2}.
However, the analysis in \cite{guigues2016} assumes  boundedness of the feasible set of the optimization problem
defining the risk measure; an assumption that can be enforced  for risk measure
$\mathcal{R}$ given by \eqref{secop-1}. We provide in this situation formulas for the constants $L, M_1,$ and $M_2$ defined in the previous section,
necessary to compute the bounds from \cite{guigues2016}.
These constants are slighlty refined versions of the constants given in
Section 4.2 of \cite{nemlansh09} for the special case $\mathcal{R}=\avr_\alpha$ and $G(x,\xi)=\xi^T x$.

We   assume here  that the set  $\Xi$ is compact, $G(\cdot,\cdot)$ is continuous, for every $x \in \X$ the distribution of $G_x$ is continuous, and that
that the set $\cW=\{w\}$ is a singleton  i.e.,  
\begin{equation}\label{secop-1}
 \R(Z)= w_0\bbe[Z]+\sum_{i=1}^k w_i\avr_{\alpha_i}(Z)
\end{equation}
for some $w\in \Delta_{k+1}$.
Consequently  problem \eqref{ropt-1} can be written
as
\begin{equation}\label{secop-1a} 
\vv_* = \inf_{(x,\tau)\in \X\times \mathbb{R}^k }\big\{   \bbe[\phi(G_x,\tau)]=\bbe[H(x, \tau, \xi)]\big\},
\end{equation}
where $\phi(G_x,\tau)$ is defined in (\ref{fun-v}), with vector $w$ omitted, and
\[
H(x, \tau, \xi):=w_0 G(x, \xi) + \sum_{i=1}^k w_i\left (\tau_i +  \frac{1}{1-\alpha_i}[G(x,\xi) - \tau_i ]_{+}\right).
\]
For a given $x\in \X$ the minimum in (\ref{secop-1a}) is attained at ${\tau}_i=F_x^{-1}(\alpha_i)$, $i=1,...,k$, where $F_x$ is the cdf of $G_x$. 
Therefore, using the lower and upper bounds from \cite{nemlansh09} for the quantile of a continuous distribution with finite mean and variance,
we can restrict   $\tau$ to   compact set  $\T=[{\ubar{\tau}}, {\bar \tau}]\subset \bbr^k$ where
\begin{equation}\label{formulastaubar}
\begin{array}{lll}
{\ubar{\tau}}_i & = & \min_{x \in \X} \bbe[G_{x}]-\sqrt{\frac{1-\alpha_i}{\alpha_i}} \sqrt{ \max_{x \in \X} \mbox{Var}(G_{x}) },  \\
\bar \tau_i & = & \max_{x \in \X} \bbe[G_{x}]+\sqrt{\frac{\alpha_i}{1-\alpha_i}} \sqrt{ \max_{x \in \X} \mbox{Var}(G_{x}) },
\end{array}
\end{equation}
for $i=1,\ldots,k$. This implies  that we can take for $D(\X\times \T)$
the quantity $\sqrt{D(\X)^2 + \|{\bar \tau} - {\ubar \tau}\|_2^2 }$.

\par {\textbf{Computation of $M_1$.}}
Setting 
\[
M_0:=\max_{(x, \xi) \in \X \times \Xi} G(x,\xi)\;{\rm and}\;  m_0 :=\min_{(x, \xi) \in \X \times \Xi} G(x,\xi),
\]
we have
for $(x,\tau)\in \X\times \T$ that $|G_{x} - \bbe[G_{x}]| \leq M_0 - m_0$
and $|[G_{x}- \tau_i]_{+}  - \bbe[G_{x}- \tau_i]_{+}| \leq M_0 - {\ubar{\tau}}_i$
which implies that almost surely
$$
|\phi(G_x,\tau) - \bbe[\phi(G_x,\tau)]| \leq M_1 :=w_0 (M_0 - m_0 ) + \sum_{i=1}^k \frac{w_i}{1-\alpha_i}(M_0 - {\ubar \tau}_i ).
$$

\par {\textbf{Computation of $M_2$ and $L$.}} We have $H_{x, \tau}'(x, \tau, \xi)=[H_{x}'(x, \tau, \xi); H_{\tau}'(x, \tau, \xi)]$ with
$$
\begin{array}{lll}
H_{x}'(x, \tau, \xi)&=& w_0 G_{x}'(x, \xi) + \sum_{i=1}^k \frac{w_i}{1-\alpha_i} G_{x}'(x, \xi) \ind_{G(x, \xi) \geq \tau_i},\\
H_{\tau}'(x, \tau, \xi)&=& (w_i(1-\frac{1}{1-\alpha_i} \ind_{G(x, \xi) \geq \tau_i}))_{i=1,\ldots,k}.
\end{array}
$$
We assume that for every $x \in \X$, the stochastic subgradients $G_{x}'(x, \xi)$ are almost surely bounded and we denote
by ${\underline{m}}$ and ${\overline{M}}$ vectors such that almost surely ${\underline{m}} \leq G_{x}'(x, \xi) \leq {\overline{M}}$.
Then for $(x,\tau)\in \X\times \T$, setting
$$
\begin{array}{l}
a_i = w_0 {\overline M}_i + \sum_{i=1}^k \frac{w_i}{1-\alpha_i}\max(0,{\overline M}_i) \mbox{ and }b_i = w_0 {\underline m}_i + \sum_{i=1}^k \frac{w_i}{1-\alpha_i}\min(0,{\underline m}_i)
\end{array}
$$
we have
$$
\begin{array}{l}
\|\bbe[H_{x, \tau}'(x, \tau, \xi)]\|_2^2  \leq  L^2:=\sum_{i=1}^m \max(a_i^2, b_i^2)  + \sum_{i=1}^k  w_i^2 \max  \left(1,\frac{\alpha_i^2}{(1 - \alpha_i)^2}\right) ,\\
\bbe \|H_{x, \tau}'(x, \tau, \xi) - \bbe[H_{x, \tau}'(x, \tau, \xi)] \|_2^2  \leq  M_2^2 := \sum_{i=1}^m (a_i  - b_i )^2  + \sum_{i=1}^k  \left( \frac{ w_i }{1 - \alpha_i } \right)^2 .
\end{array}
$$
In some cases, the above formulas for ${\bar \tau}, {\ubar \tau}, L, M_1,$ and $M_2$ can be simplified:
\begin{example}
{\rm
Let $k=1$ in \eqref{secop-1} and $G(x, \xi)=\xi^T x$ where $\xi$ is a random vector with mean $\mu$
and covariance matrix $\Sigma$. In this case $\min_{x \in \X} \bbe[G_{x}]$ and $\max_{x \in \X} \bbe[G_{x}]$ are convex
optimization problems with linear objective functions
and denoting by $U_1$ the quantity  $\max_{x \in \X} \|x\|_1$ or an upper bound  on this quantity,
we can replace $\max_{x \in \X} \mbox{Var}(G_{x}) $ by $U_1^2 \max_i \Sigma(i,i)$ in the expressions of ${\ubar{\tau}}_i$ and ${\bar \tau}_i$.
Computing $M_0$ and $m_0$ also amounts to solve convex optimization problems with linear objective.
Assume also that almost surely $\|\xi\|_{\infty} \leq U_2$ for some $0<U_2<+\infty$. We  have
$|G_x - \bbe[G_x]| \leq 2 U_1 U_2 $ and $|[G_x -\tau ]_{+} - \bbe[G_x -\tau ]_{+}| \leq U_1 U_2 - {\ubar \tau}$
which shows that we can take $M_1=2w_0 U_1 U_2 + \frac{w_1}{1-\alpha_1}(  U_1 U_2 - {\ubar \tau}  )$.
We have
$\bbe[  H_{\tau}'(  x, \tau, \xi   )  ] = w_1 ( 1 - \frac{\mathbb{P}( \xi^T x \geq \tau )}{1-\alpha_1})$
so that $|\bbe[  H_{\tau}'(  x, \tau, \xi   )  ]| \leq w_1 \max(1, \frac{\alpha_1}{1-\alpha_1})$
and $\|\bbe[ H_{x}'(  x, \tau, \xi   )   ]\|_2^2 \leq n (w_0 + \frac{w_1}{1-\alpha_1})^2 U_2^2$, i.e., we can take
$L^2= w_1^2 \max(1, \frac{\alpha_1^2}{(1-\alpha_1)^2}) + n (w_0 + \frac{w_1}{1-\alpha_1})^2 U_2^2$.
Next, for all $\xi_0 \in \Xi$ we have
$$
\begin{array}{lll}
|H_{\tau}'(x, \tau, \xi_0 ) - \bbe[  H_{\tau}'(x, \tau, \xi ) ]|& = &\frac{w_1 (1-\mathbb{P}(\xi^T x \geq \tau))}{1-\alpha_1} \mbox{ if }\xi_0^T x \geq \tau,\\
& = & \frac{w_1  \mathbb{P}(\xi^T x \geq \tau)  }{1-\alpha_1} \mbox{ otherwise},
\end{array}
$$
implying that $|H_{\tau}'(x, \tau, \xi_0 ) - \bbe[  H_{\tau}'(x, \tau, \xi ) ]| \leq \frac{w_1}{1-\alpha_1}$.
Since $\|H_{x}'(x, \tau, \xi_0 ) - \bbe[  H_{x}'(x, \tau, \xi ) ] \|_{\infty} \leq 2(w_0 + \frac{w_1}{1-\alpha_1}) U_2$, we can take
$M_2^2 = \frac{w_1^2}{(1-\alpha_1)^2} + 4 n(w_0 + \frac{w_1}{1-\alpha_1})^2 U_2^2$.
In the special case when $\X =\{ x_* \}$ is a singleton, denoting $\eta = \xi^T x_*$, we have $\vv_* = \mathcal{R}( \eta )$ and the above computations show that we can take
\begin{equation}\label{formulaLM1M2simplecase}
\begin{array}{l}
L=w_1 \max(1, \frac{\alpha_1}{1-\alpha_1}), M_1= w_0 (b_0 -a_0 ) + \frac{w_1}{1-\alpha_1}(b_0 - {\ubar \tau}   ), \mbox{ and }M_2=\frac{w_1}{1-\alpha_1},
\end{array}
\end{equation}
where $\ubar \tau = \mathbb{E}[\eta] - \sqrt{\frac{1-\alpha_1}{\alpha_1}}\sqrt{\mbox{Var}(\eta)}$ with $a_0, b_0$ satisfying $ a_0 \leq \eta \leq b_0$ almost surely.
}
\end{example}

Finally, note that the nonasymptotic tests of this and the previous section do not require the independence of $\xi^{N, 1},\ldots,\xi^{N, K}$
and are valid for any sample size $N$.
However, they use conservative confidence bounds and rejection regions meaning that they can lead to large probabilities of type II errors.
The asymptotic tests to be presented in the next section are valid as the sample size tends to infinity but work well
in practice for small sample sizes ($N=20$) for problems of small to moderate size ($n$ up to $500$); see the numerical simulations of
Section \ref{numsim}.

\subsection{Asymptotic tests} \label{astest}

\par {\textbf{Test \eqref{deftest2}.}} Consider optimization problem \eqref{ropt-1} and the
SAA approximation
$\hat{\vv}_N$ of its optimal value
$\vv_*$ obtained using a sample
$(\xi_1, \ldots, \xi_N)$ of $\xi$. Let also $\hat \nu_N^2$ be the empirical estimator (\ref{varsaa})
of the variance \eqref{asex-2v}.
Under the assumptions of   Theorem \ref{th-minopt}, we have the asymptotics (\ref{conv1estimatedvariance}). 
Therefore  for $N$ large, we can approximate the distribution of $\frac{N^{1/2}\left(\hat{\vv}_N  - \vv_* \right)}{  {\hat \nu}_{N}}$
by the standard normal $\mathcal{N}(0,1)$.

It follows that for tests \eqref{deftest2}-(a) and \eqref{deftest2}-(b), we obtain respectively  the asymptotic
rejection regions
$$
\begin{array}{l}
\mathcal{W}_{\eqref{deftest2}-(a)}^{As} = \left\{(\xi_1,\ldots,\xi_N) \;:\;|\hat{\vv}_N-\rho_0|>\frac{ {\hat \nu}_{N} }{\sqrt{N}} \Phi^{-1}(1-\frac{\beta}{2}) \right\} \mbox{ and }\\
\mathcal{W}_{\eqref{deftest2}-(b)}^{As} = \left\{(\xi_1,\ldots,\xi_N) \;:\;\hat{\vv}_N>\rho_0+ \frac{ {\hat \nu}_{N}    }{\sqrt{N}} \Phi^{-1}(1-\beta) \right\},
\end{array}
$$
where $\Phi( \cdot )$ is the cumulative distribution function of the standard normal distribution.\\

\par {\textbf{Tests \eqref{deftest1} and \eqref{testcontregalite}.}}
Let us now consider $K>1$, optimization problems of the form \eqref{ropt-1}
with $\xi$, $g$, and $\X$ respectively replaced by $\xi^i$, $g_{i}$, and $\X_i$ for problem $i$.
For $i=1,\ldots,K$,
let $(\xi^i_1,\ldots,\xi^i_{N})$ be a sample from the distribution
of $\xi^i$, let
$\vv_*^i$ be the optimal value of problem $i$ and $z_*^i=(x_*^i,\tau_*^i)$ the optimal solution.
Let also $\hat{\vv}_{N}^i$ be the SAA estimator  of the optimal
value for problem $i=1,...,K$, and ${\hat{\nu}}_N^i$ be
the empirical estimator 
of the variance $\mbox{Var}[H_i(z_*^i , \xi_i)]$ based  on the  sample for problem
$i$. We assume that the samples are i.i.d. and that
$\xi^{N, 1}, \ldots, \xi^{N, K}$ are independent.
Under the assumptions of Theorem \ref{th-minopt}    for $N$ large  we can approximate the distribution of $\frac{N^{1/2}\left(\hat{\vv}_N^i  - \vv_*^i \right)}{  {\hat \nu}_{N}^i }$
by the standard normal $\mathcal{N}(0,1)$.

Let us first consider the statistical tests \eqref{deftest1}-(a) and \eqref{deftest1}-(b)  with $K=2$:
$$
\begin{array}{ll}
H_0:\; \vv_*^1 = \vv_*^2 \hspace*{-0.3cm}  & \mbox{ against }{H_1}: \;\vv_*^1 \neq \vv_*^2 \\
H_0:\; \vv_*^1 \leq  \vv_*^2 \hspace*{-0.3cm}  & \mbox{ against }{H_1}: \;\vv_*^1 > \vv_*^2.
\end{array}
$$
For $N$ large, we approximate  the distribution of
$
\frac{ ({\hat{\vv}}_{N}^1 - {\hat{\vv}}_{N}^2 ) - (\vv_{*}^1 - \vv_{*}^2)}{ \sqrt{\frac{ ({\hat{\nu}}_{N}^1  )^2 }{N}+ \frac{ ({\hat{\nu}}_{N}^2 )^2 }{N}  }}
$
by the standard normal $\mathcal{N}(0,1)$  and
we obtain the rejection regions
%{\small{
\begin{equation}\label{rejectionregionasK2}
\begin{array}{l}
\left\{(\xi_{N}^1, \xi_{N}^2) \;:\;|{\hat{\vv}}_{N}^1 -{\hat{\vv}}_{N}^2 |> \sqrt{\frac{ ({\hat{\nu}}_{N}^1 )^2 }{N}+ \frac{ ({\hat{\nu}}_{N}^2   )^2  }{N}  } \Phi^{-1}(1-\frac{\beta}{2}) \right\}\mbox{ for test \eqref{deftest1}-(a) with }K=2, \\
\left\{(\xi_{N}^1, \xi_{N}^2) \;:\;{\hat{\vv}}_{N}^1  > {\hat{\vv}}_{N}^2  + \sqrt{\frac{ ({\hat{\nu}}_{N}^1   )^2 }{N}+ \frac{ ({\hat{\nu}}_{N}^2   )^2  }{N}  } \Phi^{-1}(1-\beta) \right\}\mbox{ for test \eqref{deftest1}-(b) with }K=2.
\end{array}
\end{equation}

We finally consider test \eqref{testcontregalite}:
$$
H_0:\theta \in \Theta_0 \mbox{ against }{H_1}:\theta \notin \Theta_0
$$
for $\theta=(\vv_{*}^1,\ldots,\vv_{*}^K)^T$ with $\Theta_0$ a linear space or a closed convex cone.

Let $\Theta_0$ be the subspace
 \begin{equation} \label{defcontegaltestbis}
\Theta_0 = \{\theta \in \mathbb{R}^K : A \theta = 0\}
\end{equation}
where
$A$ is a $k_0 \small{\times}K$ matrix of full rank $k_0$.
Note that  test \eqref{deftest1}-(a) can be written under this form with
$A$ a $(K-1)\small{\times}K$ matrix of rank $K-1$.
%given by
%$A=\left(\begin{array}{ccccc}1&-1&0&\ldots&0\\0&1&-1&\ddots&\vdots\\\vdots&\ddots&\ddots&\ddots&0\\0&\ldots&0&1&-1\end{array}\right)$.
We have for $\theta$
the estimator  ${\hat \theta}_{N}=\Big({\hat{\vv}}_{N}^1,\ldots,{\hat{\vv}}_{N}^K\Big)^T$.
Fixing $N$ large, since $\xi^{N, 1}, \ldots, \xi^{N, K}$ are independent, using
the fact that $\frac{N^{1/2}\left(\hat{\vv}_N^i  - \vv_*^i \right)}{  {\hat \nu}_{N}^i } \xrightarrow{\mathcal{D}}  \mathcal{N}(0,1)$, the distribution of ${\hat \theta}_{N}$
can be approximated by the Gaussian
$\mathcal{N}(\theta, \Sigma)$ distribution with $\Sigma$ the diagonal
matrix $\Sigma=(1/N)\mbox{diag}\Big(\mbox{Var}(H_1( z_*^1 , \xi_1 )), \ldots, \mbox{Var}(H_K( z_*^K , \xi_K ))\Big)$.
The log-likelihood ratio statistic for test \eqref{testcontregalite}
is
$\Lambda=\frac{\displaystyle \sup_{\theta \in \Theta_0, \Sigma \succ 0} \mathcal{L}(\theta, \sigma)}{\displaystyle \sup_{\theta, \Sigma \succ 0} \mathcal{L}(\theta, \sigma)}$
where $\mathcal{L}(\theta, \Sigma)$ is the likelihood function for a Gaussian multivariate model.
For a sample $(\tilde \theta_1, \ldots, \tilde \theta_M)$ of $\hat \theta_N$, introducing the estimators
$$
{\hat \theta}=\frac{1}{M} \sum_{i=1}^{M}\,\tilde \theta_i   \mbox{ and }
{\hat \Sigma}=\frac{1}{M-1} \sum_{i=1}^{M}\,\Big(\tilde \theta_i - {\hat \theta}\Big) \Big(\tilde \theta_i - {\hat \theta}\Big)^T
$$
of respectively $\theta$ and $\Sigma$, we have
\begin{equation}\label{hotelling}
-2 \ln \Lambda=K \ln \Big( 1+\frac{T^2}{M-1} \Big) \;\mbox{ where }\;T^2=K \displaystyle \min_{\theta \in \Theta_0}\,\Big({\hat \theta}-\theta \Big)^T {\hat \Sigma}^{-1} \Big({\hat \theta}-\theta \Big)
\end{equation}
and when $\Theta_0$ is of the form \eqref{defcontegaltestbis}, under $H_0$, we have $T^2 \sim \frac{k_0 (M-1)}{M-k_0 } F_{k_0 , M-k_0 }$
where $F_{p , q}$ is the Fisher-Snedecor distribution with parameters $p$ and $q$.
For asymptotic test
\eqref{testcontregalite} at confidence level $\beta$ with $\Theta_0$ given by \eqref{defcontegaltestbis},
we then reject $H_0$ if
$T^2  \geq \frac{k_0 (M -1)}{M-k_0 } F_{k_0 , M-k_0}^{-1}(1-\beta)$
where $F_{p, q}^{-1}(\beta)$ is the $\beta$-quantile of the Fisher-Snedecor distribution with parameters $p$ and $q$.

Now take for $\Theta_0$ the convex cone $\Theta_0 = \{\theta \in \mathbb{R}^K : A \theta \leq 0\}$
where $A$ is a $k_0 \small{\times}K$ matrix of full rank $k_0$ (tests
\eqref{deftest1}-(b), (c) are special cases) and assume that $M \geq K+1$.
Since the corresponding null hypothesis is {\em{$\theta$ belongs to a one-sided cone}},
on the basis of the sample $(\tilde \theta_1, \ldots, \tilde \theta_M)$ of $\hat \theta_N$, we can use \cite{perlman}
and we reject $H_0$ for large values of the statistic
%$$
%\mathcal{U}(\Theta_0)=\Big(\|\Pi_{S}({\hat \theta}|\mathbb{R}^{K})\|_S^2 - \|\Pi_{S}({\hat \theta}|\Theta_0 )\|_S^2 \Big) \Big(1+ \|{\hat \theta}- \Pi_{S}({\hat \theta}|\mathbb{R}^{K})   \|_{S}^2   \Big)^{-1}
%$$
$$
\mathcal{U}(\Theta_0)= \| {\hat \theta} \|_S^2 - \| \Pi_{S}({\hat \theta}|\Theta_0 )\|_S^2 = \|   {\hat \theta}  -  \Pi_{S}({\hat \theta}|\Theta_0 ) \|_S^2
$$
where $S=\frac{M-1}{M}{\hat \Sigma}$, $\|x\|_S =\sqrt{x^T S^{-1} x}$, and $\Pi_{S}(x|A)$ is any point in $A$
minimizing $\|y-x\|_S$ among all $y \in A$.
For a type I error of at most $0<\beta<1$, knowing that \cite{perlman}
\begin{equation}\label{testconeerrI}
\sup_{\theta \in \Theta_0, \Sigma \succ 0}  \mathbb{P}\Big(\mathcal{U}(\Theta_0) \geq u | \theta, \Sigma \Big) \leq 
\mbox{\tt{Err}}(u):=\frac{1}{2}\Big[ \mathbb{P}\Big( G_{K-1, M-K-1} \geq u \Big) + \mathbb{P}\Big(  G_{K, M-K} \geq u \Big)\Big],
\end{equation}
where $G_{p, q}=(p/q)F_{p, q}$,
we reject $H_0$ if $\mathcal{U}(\Theta_0) \geq u_\beta$  where $u_\beta$ satisfies
$\beta=\mbox{\tt{Err}}(u_\beta)$ with $\mbox{\tt{Err}}(\cdot)$ given by \eqref{testconeerrI}.

\section{Numerical experiments} \label{numsim}

\subsection{Comparing the risk of two distributions} \label{examplecompare}

We consider test \eqref{deftest1} with $K=2$ and $\X$ a singleton.
We use the rejection regions given in Section \ref{natest} (resp. given by \eqref{rejectionregionasK2}) in the nonasymptotic (resp. asymptotic) case.
In this situation, the test aims at comparing the risk of two distributions.
We use the notation $\mathcal{N}(m, \sigma^2; a_0, b_0)$ for the normal
distribution with mean $m$ and variance $\sigma^2$ conditional on
this random variable being in $[a_0, b_0]$ (truncated normal distribution
with support $[a_0, b_0]$).
More precisely, we compare the risks $\mathcal{R}( \xi_1 )$ and $\mathcal{R}( \xi_2 )$  of two  truncated normal (loss) distributions $\xi_1$ and $\xi_2$
with support $[a_0, b_0]=[0, 30]$ in three cases: (I)
$\xi_1 \sim \mathcal{N}(10, 1; 0, 30)$, $\xi_2 \sim \mathcal{N}(20, 1; 0, 30)$,
(II) $\xi_1 \sim \mathcal{N}(5, 1; 0, 30)$, $\xi_2 \sim \mathcal{N}(10, 25; 0, 30)$,
and (III) $\xi_1 \sim \mathcal{N}(10, 49; 0, 30)$, $\xi_2 \sim \mathcal{N}(14, 0.25; 0, 30)$.
For these three cases, the densities of $\xi_1$ and $\xi_2$ are represented in
Figure \ref{figuredensity} (top left for (I), top right for (II), and bottom for (III)).

We take for $\mathcal{R}$ the risk measure
$\mathcal{R}(\xi) = w_0 \mathbb{E}[\xi] + w_1 \avr_{\alpha}( \xi )$ for $0<\alpha<1$ where $w_0, w_1 \geq 0$
with $w_0 + w_1=1$.
We assume that only the support $[a_0,b_0]$ of $\xi_1$ and $\xi_2$ and two samples $\xi_1^N$ and $\xi_2^N$
of size $N$ of respectively $\xi_1$ and $\xi_2$ are known.
Since
the distribution of $\xi$ has support $[a_0,b_0]$, we can write
\begin{equation} \label{confintr}
\mathcal{R}(\xi) = \min_{\tau \in [a_0 , b_0 ]} w_0 \mathbb{E}[\xi] + w_1 \left( \tau + \frac{1}{1-\alpha} \mathbb{E} [\xi - \tau ]_{+} \right)
\end{equation}
which is of form \eqref{ropt-1} with a risk-neutral objective function,
$G(\tau,\xi)=w_0 \xi + w_1 \tau + \frac{w_1}{1-\alpha}[\xi-\tau]_{+}$,
and $\X$ the compact set $\X=[a_0,b_0]=[0,30]$.
It follows that the RSA algorithm can be used to estimate $\mathcal{R}(\xi_1)$ and $\mathcal{R}(\xi_2)$ and to compute the confidence bounds
\eqref{lowsmd1} and \eqref{upsmd1} with $L, M_1,$ and $M_2$ given by \eqref{formulaLM1M2simplecase}.
In these formulas, we replace $\ubar \tau$ by its lower bound $0$
since we do not assume the mean and standard deviation of $\xi_1$ and $\xi_2$ known.
We obtain  $L= w_1 \max(1,\frac{\alpha}{1-\alpha})$,
$M_2= \frac{w_1}{1-\alpha}$, and $M_1 = 30 (w_0 + \frac{w_1}{1-\alpha})$.

\begin{figure}[H]
\centering
\begin{tabular}{ll}
\includegraphics[scale=0.35]{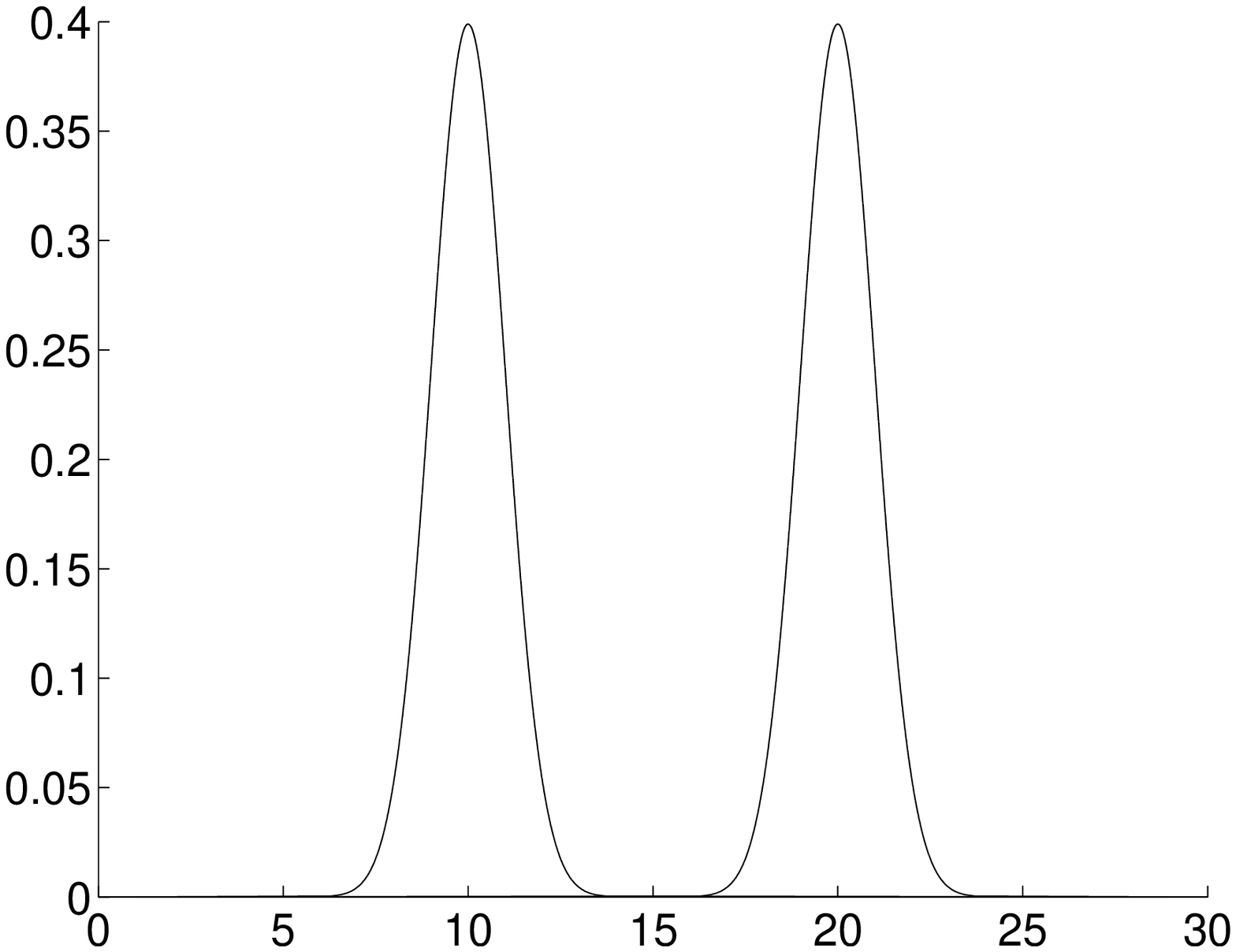}
&
\includegraphics[scale=0.35]{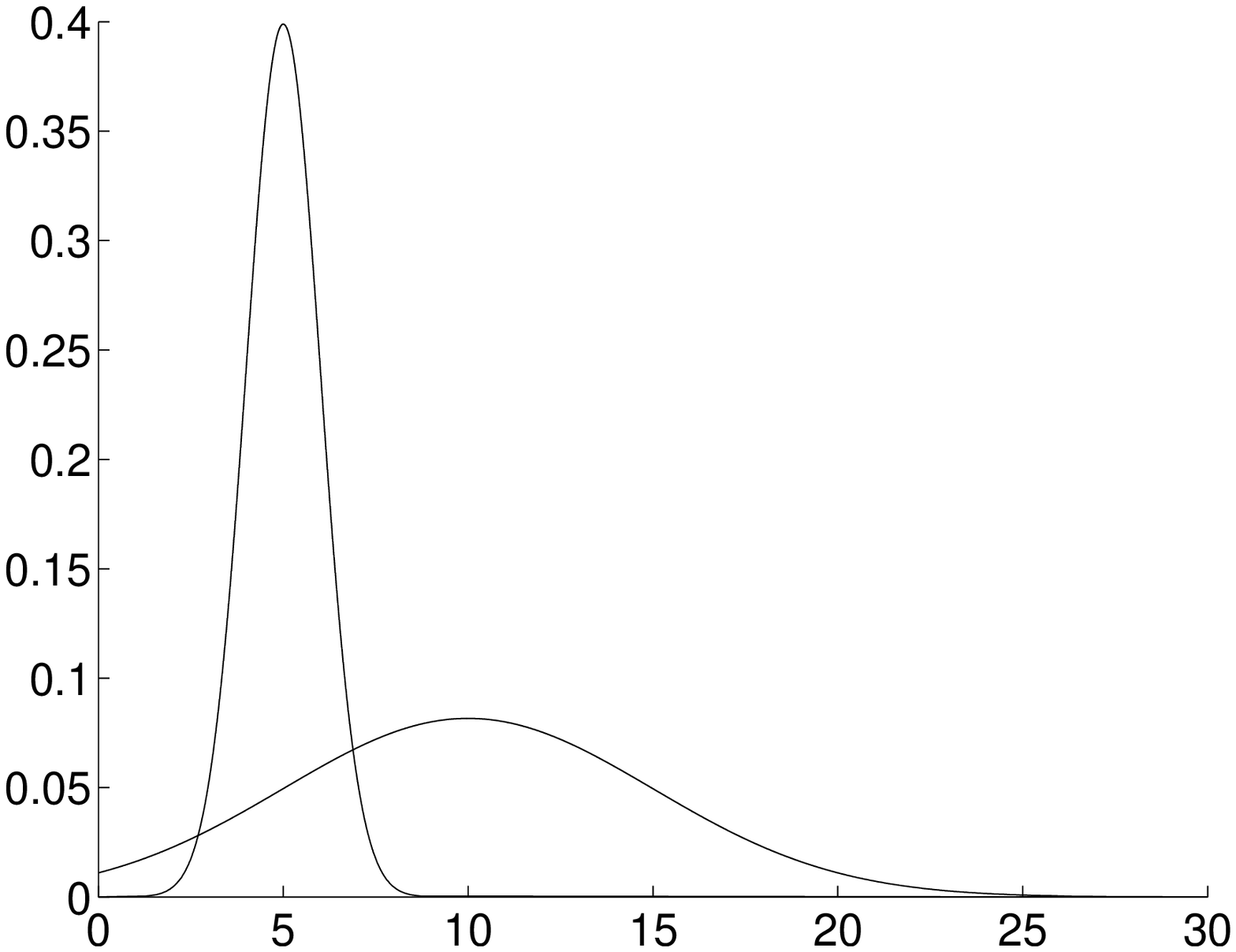}
\end{tabular}
\begin{tabular}{l}
\includegraphics[scale=0.35]{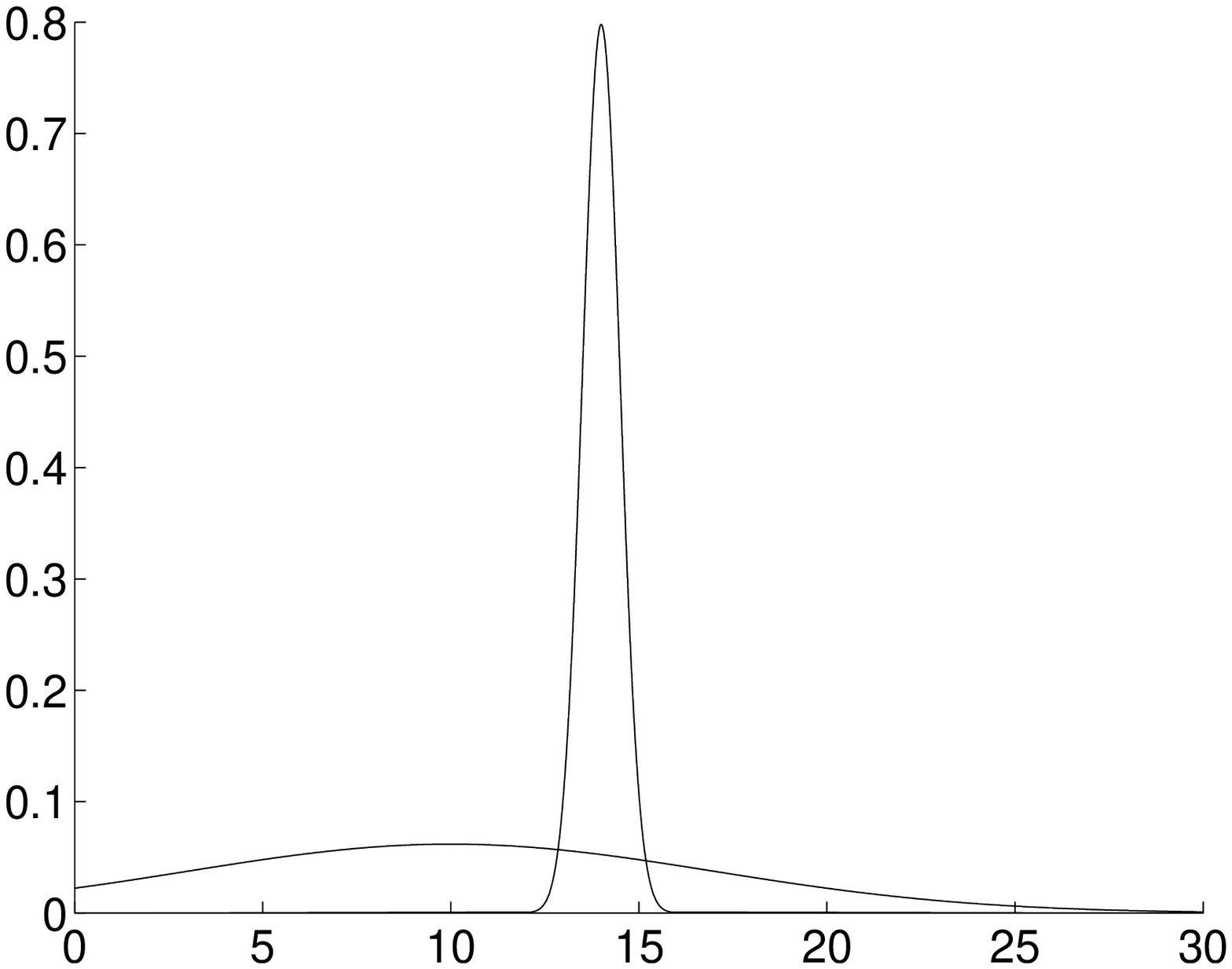}
\end{tabular}
\caption{Densities of truncated normal loss distributions $\xi_1$ and $\xi_2$.
Top left plot: $\xi_1 \sim \mathcal{N}(10, 1; 0, 30)$ and $\xi_2 \sim \mathcal{N}(20, 1; 0, 30)$.
Top right plot: $\xi_1 \sim \mathcal{N}(5, 1; 0, 30)$ and $\xi_2 \sim \mathcal{N}(10, 25; 0, 30)$.
Bottom plot: $\xi_1 \sim \mathcal{N}(10, 49; 0, 30)$ and $\xi_2 \sim \mathcal{N}(14, 0.25; 0, 30)$.}
\label{figuredensity}
\end{figure}

We first illustrate Theorem \ref{th-minmax} computing the empirical estimation $\mathcal{R}( {\hat F}_{N, 1}  )$ of $\mathcal{R}(\xi_1)$
on $200$ samples of size $N$ of $\xi_1 \sim \mathcal{N}(10, 1; 0, 30)$
for $w_0=0.1$, $w_1=0.9$, and various values of $\alpha$ and of the sample size $N$.
For this experiment, the QQ-plots of the empirical distribution of $\mathcal{R}( {\hat F}_{N, 1}  )$ versus the normal distribution with parameters the empirical mean and standard deviation
of this empirical distribution are reported in Figure \ref{figureqq}. We see that even for small values of $1-\alpha$
and $N$ as small as $20$, the distribution of $\mathcal{R}( {\hat F}_{N, 1}  )$
is well approximated by a Gaussian distribution: for $N=20$ the  Jarque-Bera test accepts the hypothesis of normality at the significance level 0.05 for $1-\alpha=0.01$
and $1-\alpha=0.5$.

\begin{figure}
\begin{tabular}{ll}
\includegraphics[scale=0.5]{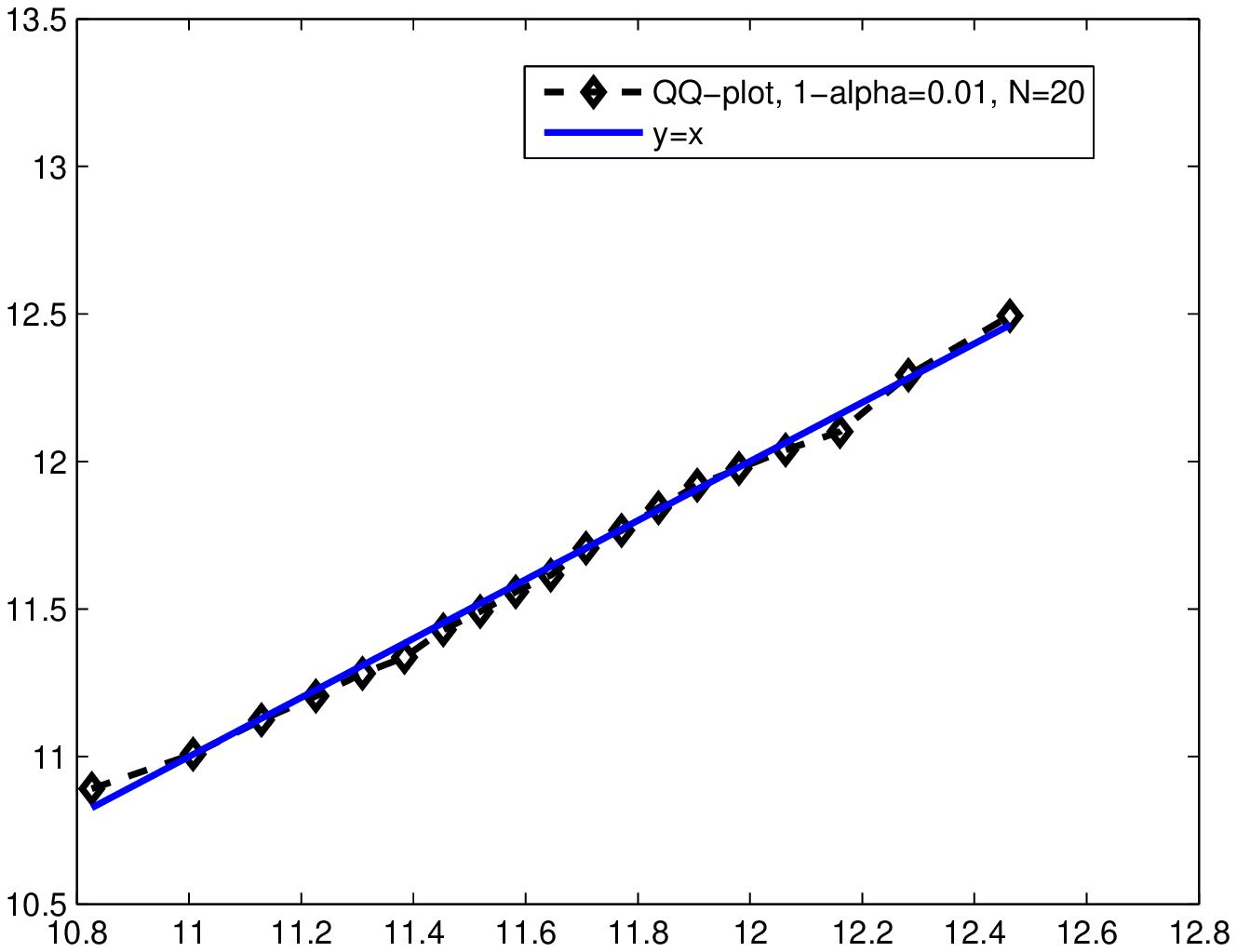}
&
\includegraphics[scale=0.5]{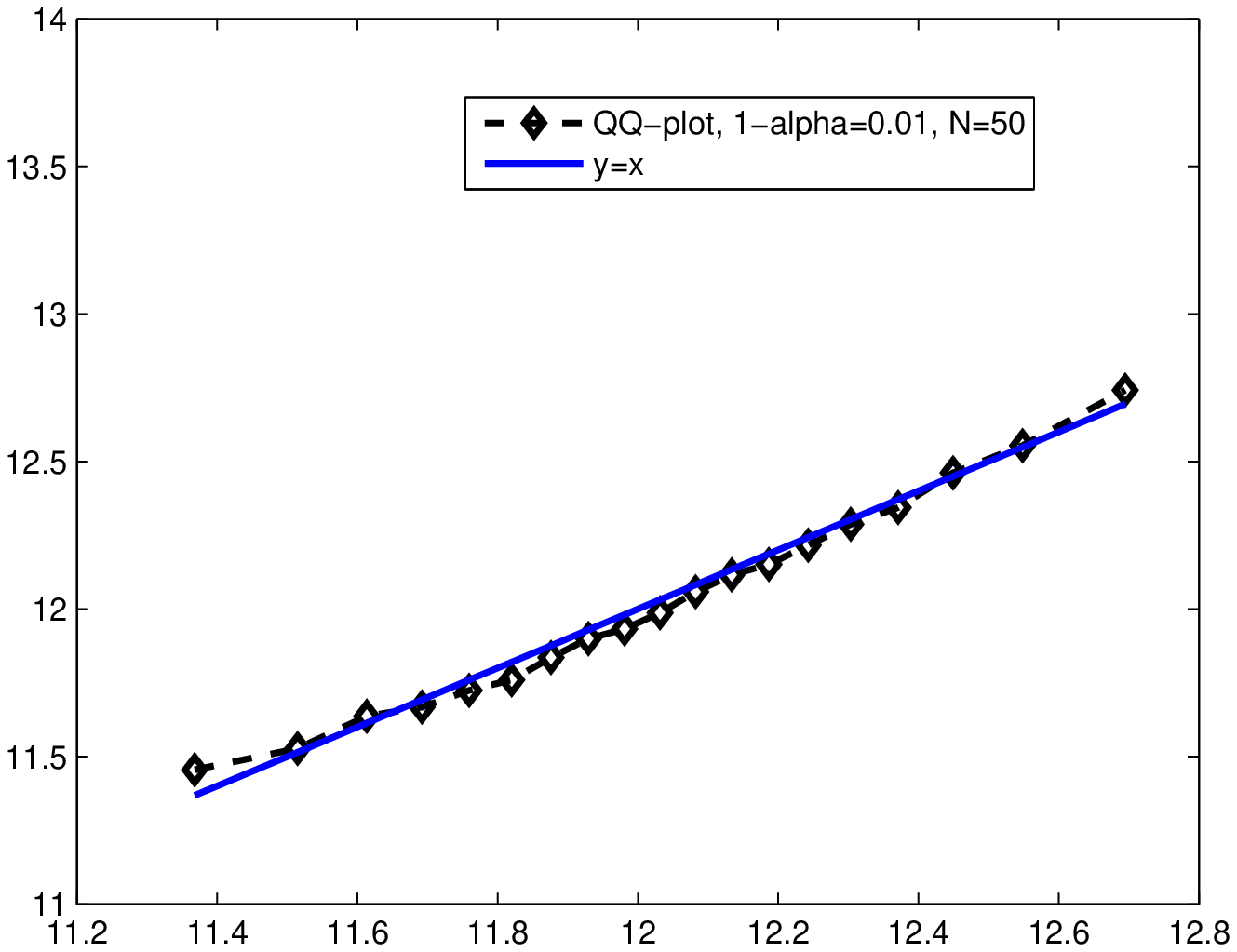}\\
\includegraphics[scale=0.5]{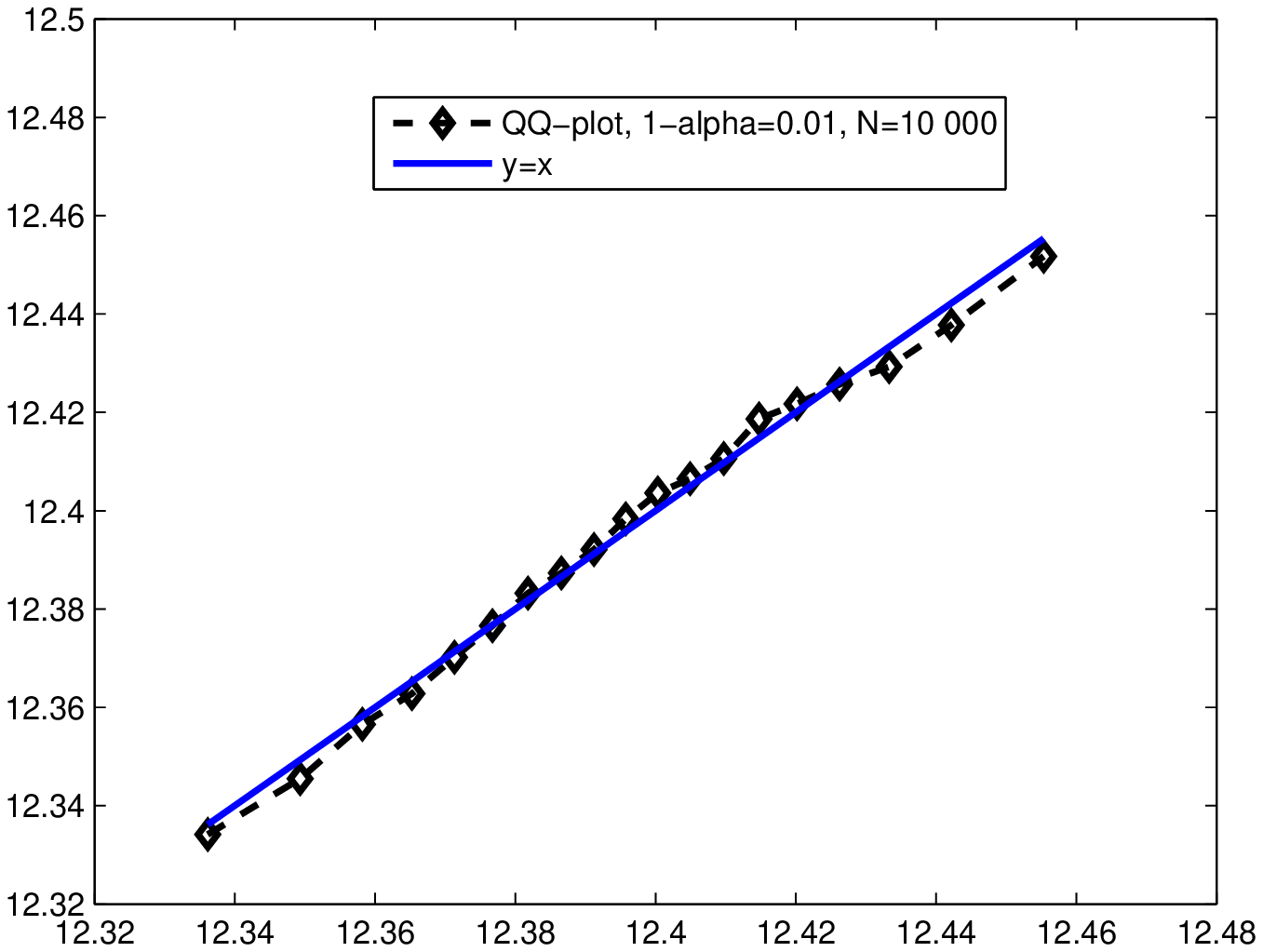}
&\includegraphics[scale=0.5]{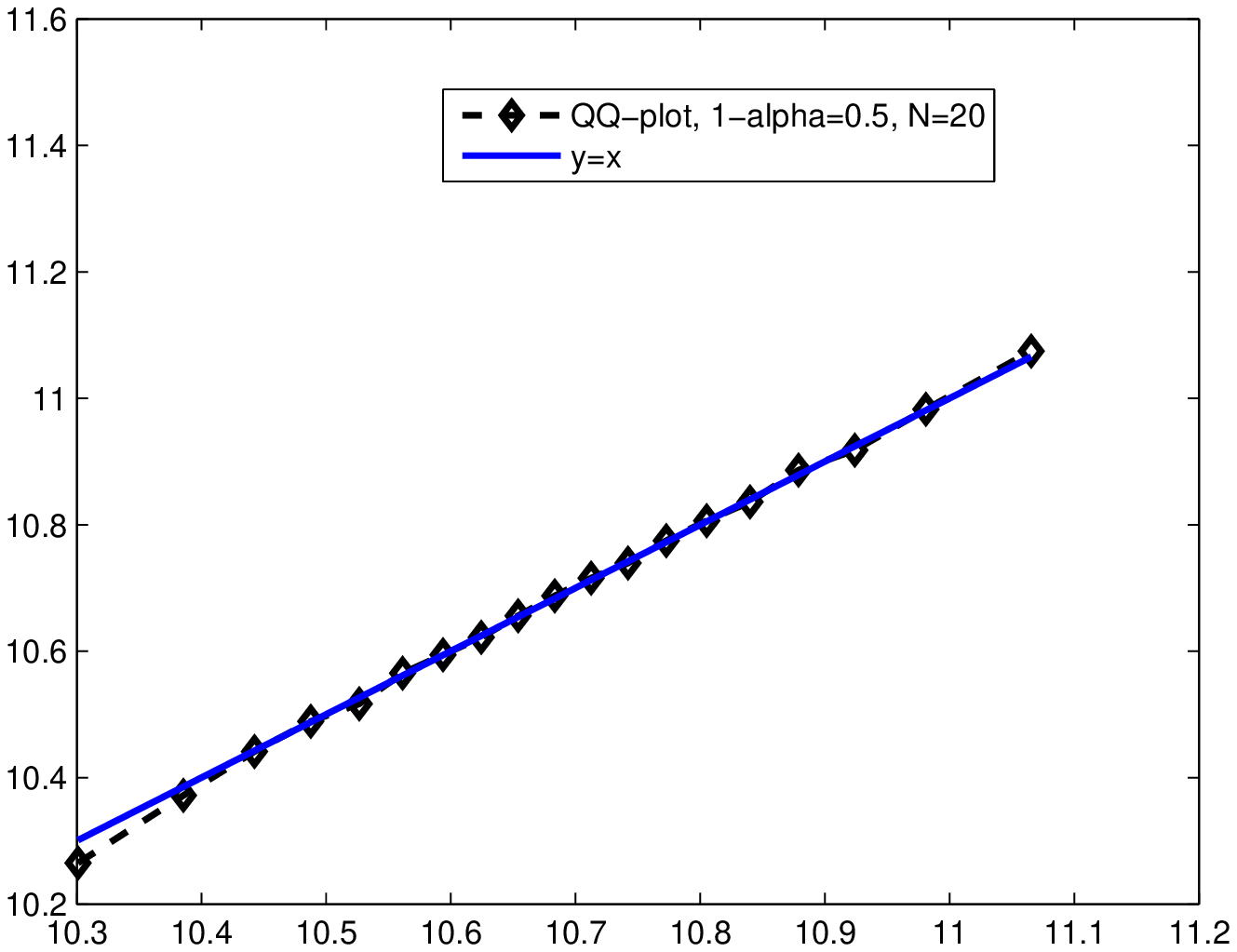}\\
\includegraphics[scale=0.5]{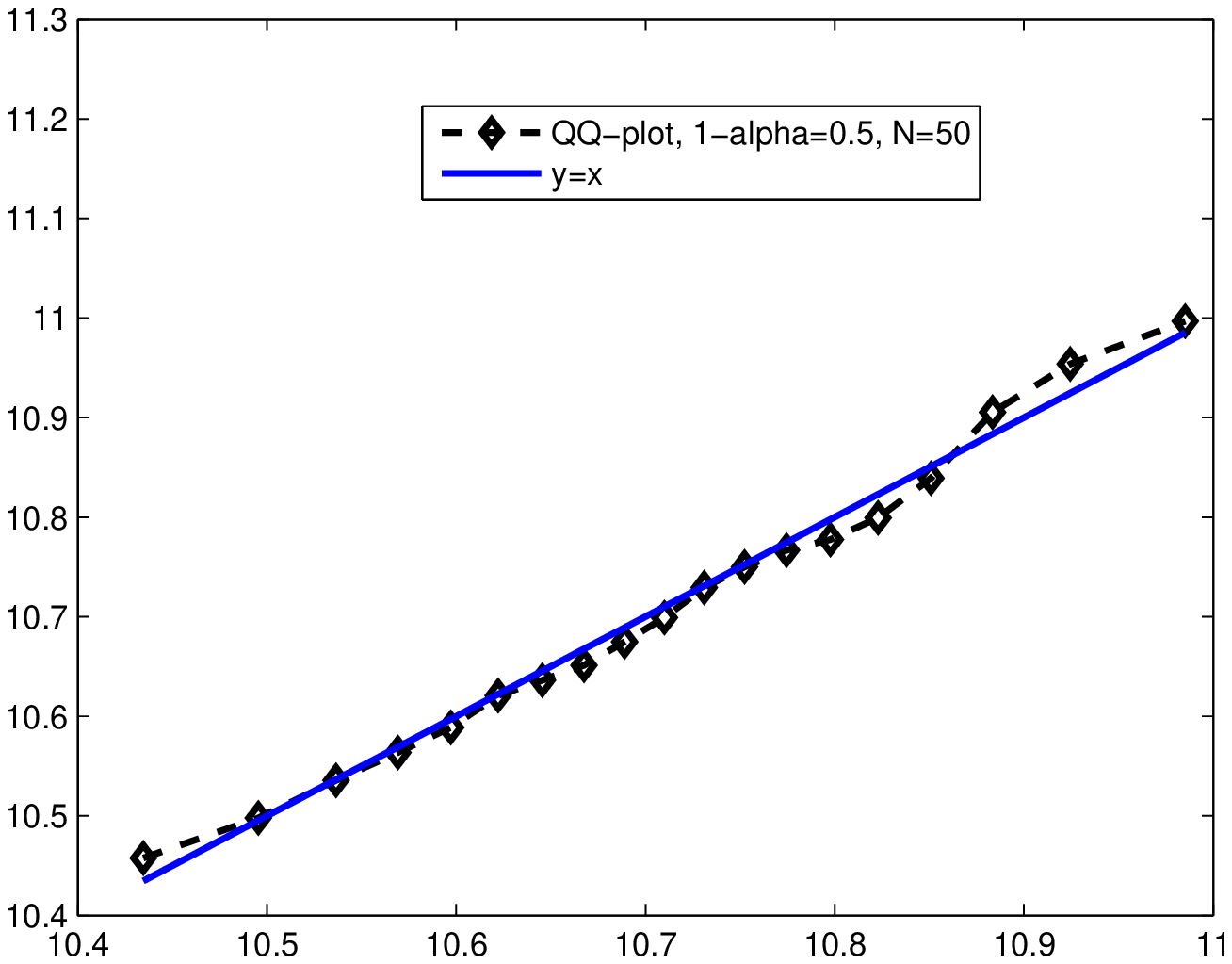}&
\includegraphics[scale=0.5]{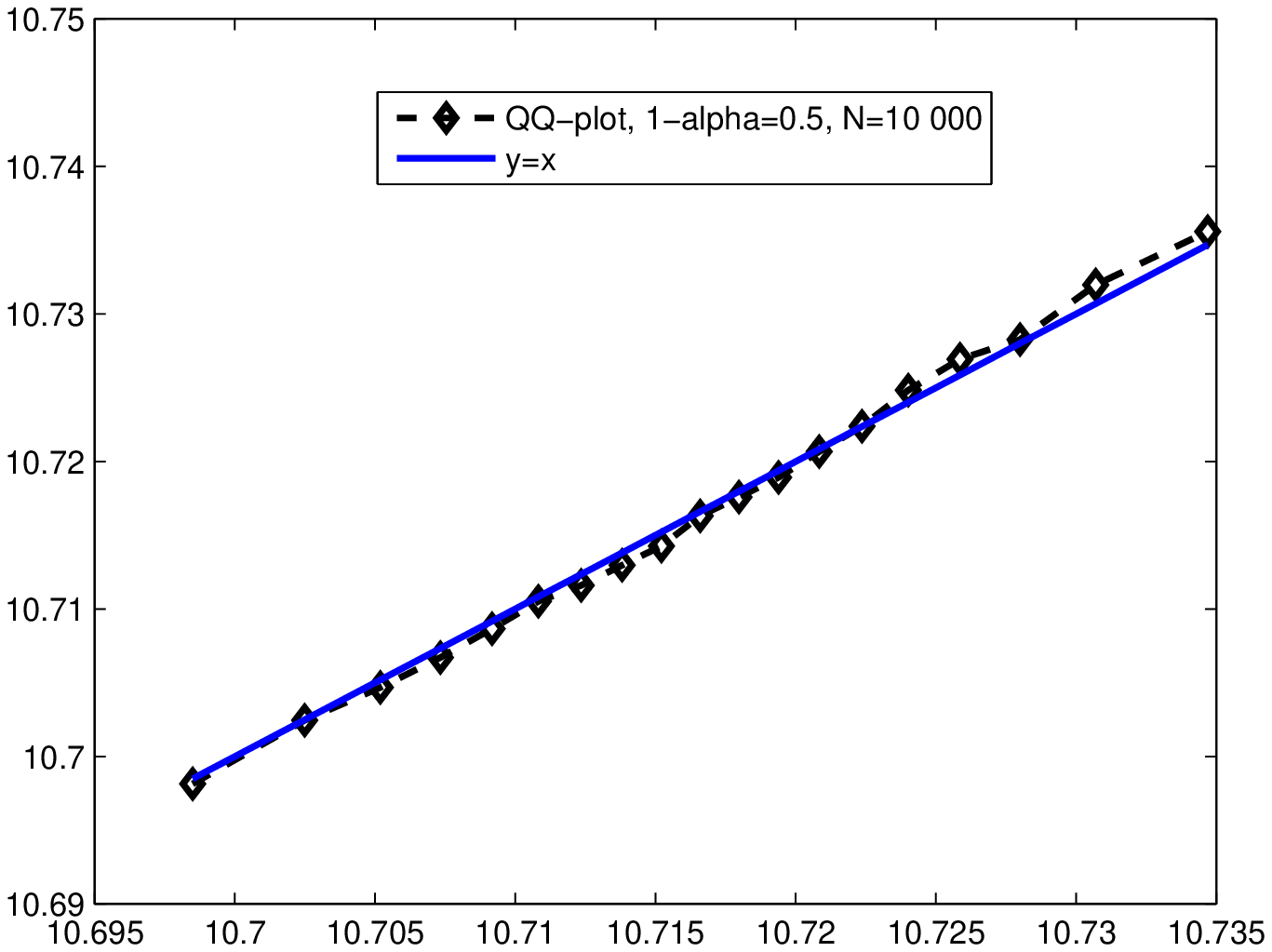}
\end{tabular}
\caption{QQ-plots of the empirical distribution of $\mathcal{R}( {\hat F}_{N, 1}  )$ versus the normal distribution with parameters the empirical mean and standard deviation
of this empirical distribution for $\xi_1 \sim \mathcal{N}(10, 1; 0, 30)$ for $(w_0, w_1)=(0.1, 0.9)$ and various values of
$(\alpha, N)$.}
\label{figureqq}
\end{figure}

We fix again the distribution $\xi_1 \sim \mathcal{N}(10, 1; 0, 30)$ and approximately compute $\mathcal{R}(\xi_1)$
for various values of $(w_0, w_1, \alpha, N)$ using the RSA and SAA methods on samples
$\xi_1^N$ of size $N$ of $\xi_1$.
For a sample of size $N$ of $\xi_1$, let ${\hat{\mathcal{R}}}_{N, \,{\tt{RSA}}}(\xi_1)$ and ${\hat {\mathcal{R}}}_{N, \,{\tt{SAA}}}(\xi_1)= \mathcal{R}( {\hat F}_{N, 1}  )$
be these estimations using respectively RSA and SAA.
For fixed $(w_0, w_1, \alpha, N)$, we generate 200 samples of size $N$ of $\xi_1$ and for each sample we
compute  ${\hat{\mathcal{R}}}_{N,\,{\tt{RSA}}}(\xi_1)$ and ${\hat{\mathcal{R}}}_{N, \,{\tt{SAA}}}(\xi_1)$  and report in Table \ref{tablegn1} the average of these values
for $N \in \{20, 50, 100, 10^3, 10^4, 10^5, 10^6 \}$.
Considering that $\mathcal{R}(\xi_1)$ is
the value obtained using SAA for $N=10^6$, we observe that RSA correctly approximates
$\mathcal{R}(\xi_1)$ as  $N$ grows and that the estimation of $\mathbb{E}[ {\hat{\mathcal{R}}}_{N, {\tt{SAA}}}(\xi_1) ]$   (resp. $\mathbb{E}[ {\hat{\mathcal{R}}}_{N, {\tt{RSA}}}(\xi_1) ]$)
increases (resp. decreases) with the sample size $N$, as expected.
We also naturally observe that the more weight is given to the $\avr$
and the smaller $1-\alpha$
the more difficult it is to estimate the risk measure, i.e., the more distant the expectation of the
approximation is to the optimal value and the larger the sample size needs to be to obtain an expected approximation
with given accuracy.

\begin{table}
\centering
\begin{tabular}{|c|c|c|c|c|c|c|c|c|c|}
\cline{4-10}
 \multicolumn{1}{c}{} & \multicolumn{1}{c}{}  &  \multicolumn{1}{c}{}  &\multicolumn{7}{|c|}{Sample size $N$}\\
\hline
$(w_0,w_1)$&$1-\alpha$& {\tt{Method}}    & $20$ & $50$ &  $10^2$ & $10^3$ &  $10^4$ & $10^5$  & $10^6$  \\
\hline
%(0.1, 0.9) & $10^{-3}$ & {\tt{SAA}} &11.72 &11.98 &12.23 &12.91 &13.03 & 13.03 & 13.03\\
%\hline
%(0.1, 0.9) & $10^{-3}$ & {\tt{RSA}} & 14.49& 14.48& 14.47& 14.39& 14.16& 13.57 &13.20 \\
%\hline
(0.1, 0.9) & $10^{-2}$ & {\tt{SAA}} & 11.71& 12.00& 12.21& 12.37& 12.40&12.40  &12.40\\
\hline
(0.1, 0.9) & $10^{-2}$ & {\tt{RSA}} &14.35 & 14.26& 14.16&13.46 &12.75 & 12.51 &12.43\\
\hline
(0.1, 0.9) & 0.1 & {\tt{SAA}} & 11.51&11.50 & 11.54&11.58 &11.58 &11.58  & 11.58 \\
\hline
(0.1, 0.9) & 0.1 & {\tt{RSA}} & 20.50& 16.78&15.10 &12.61 &11.90 & 11.68 &  11.61\\
\hline
(0.1, 0.9) & 0.5 & {\tt{SAA}} & 10.71& 10.69& 10.72& 10.72&10.72 & 10.72 & 10.72  \\
\hline
(0.1, 0.9) & 0.5 & {\tt{RSA}} &11.42 & 11.12& 11.02&10.81 &10.75 &10.73  &10.72   \\
\hline
%(0.9, 0.1) & $10^{-3}$ & {\tt{SAA}} &10.17 & 10.22& 10.26& 10.32&10.34 &10.34  &10.34  \\
%\hline
%(0.9, 0.1) & $10^{-3}$ & {\tt{RSA}} &10.49 &10.49 & 10.50& 10.49& 10.46& 10.40 & 10.36 \\
%\hline
(0.9, 0.1) & $10^{-2}$ & {\tt{SAA}} & 10.19&10.23 &10.25 & 10.26& 10.27&10.27  & 10.27\\
\hline
(0.9, 0.1) & $10^{-2}$ & {\tt{RSA}} &10.49 &10.48 &10.47 &10.38 &10.31 &10.28  &10.27 \\
\hline
(0.9, 0.1) & 0.1 & {\tt{SAA}} & 10.17& 10.16&10.19 &10.18 &10.18 &10.18  &10.18 \\
\hline
(0.9, 0.1) & 0.1 & {\tt{RSA}} &10.34 &10.28 &10.27 &10.20 &10.18 & 10.18 &10.18 \\
\hline
(0.9, 0.1) & 0.5 & {\tt{SAA}} &10.09 &10.07 &10.08 & 10.08& 10.08& 10.08 & 10.08 \\
\hline
(0.9, 0.1) & 0.5 & {\tt{RSA}}  & 10.17& 10.11& 10.12&10.09 &10.08 & 10.08 & 10.08 \\
\hline
\end{tabular}
\caption{Estimation of the risk measure value $\mathcal{R}(\xi_1)$ for $\xi_1 \sim \mathcal{N}(10, 1; 0, 30)$
using SAA and RSA for various values of $(w_0, w_1, \alpha)$ and various sample sizes $N$.}
\label{tablegn1}
\end{table}

We now study for case (I) the test
\begin{equation}\label{test1}
H_0: \mathcal{R}(\xi_1) = \mathcal{R}(\xi_2) \;\;\mbox{against }H_1: \mathcal{R}(\xi_1) \neq \mathcal{R}(\xi_2).
\end{equation}
We first fix $(w_0, w_1)=(0.1,0.9)$ and report in Tables \ref{tabletest0} and \ref{tabletest01} for various values of the pair $(\alpha, N)$
the average nonasymptotic and asymptotic confidence bounds for
$\mathcal{R}(\xi_1)$ and $\mathcal{R}(\xi_2)$ when $\xi_1 \sim \mathcal{N}(10, 1; 0, 30)$ and $\xi_2 \sim \mathcal{N}(20, 1; 0, 30)$.\footnote{The nonasymptotic confidence interval is given by \eqref{lowsmd1}-\eqref{upsmd1}.
Recalling that $\mathcal{R}(\xi)$ is the optimal value of optimization problem \eqref{confintr} which is of the form
\eqref{ropt-1}, we compute for $\mathcal{R}(\xi)$ the asymptotic confidence interval
$[\hat \vv_N - \Phi^{-1}(1-\beta/2) \frac{\hat \nu_N   }{\sqrt{N}}, \hat \vv_N + \Phi^{-1}(1-\beta/2) \frac{\hat \nu_N  }{\sqrt{N}}]$,
where  
$\hat \vv_N$ is the optimal value of the SAA of \eqref{confintr}. Note that in this case the optimal value $\hat \tau_N$ of
the SAA problem is the $\alpha$-quantile of the distribution of $\xi$ (no optimization step is necessary to solve the SAA problem).}
We observe that even for small values of the sample size and of the confidence level $1-\alpha$, the asymptotic confidence interval is of small width
and its bounds close to the risk measure value. For RSA, a large sample is needed to obtain a confidence interval of small width,
especially when $1-\alpha$ is small.

For all the remaining tests of this section, we choose $\beta=0.1$ for the maximal type I error and $1-\alpha=0.1$.
Since in case (I) we have $\mathcal{R}(\xi_1) \neq \mathcal{R}(\xi_2)$ (see Figure \ref{figuredensity}),
from this experiment we expect to obtain a large probability of type II error using the nonasymptotic tests of Section \ref{natest}  based on the confidence intervals
computed using RSA, unless the sample size is very large.
More precisely, we compute the probability of type II error for \eqref{test1} considering asymptotic and nonasymptotic rejection regions using
various sample sizes $N \in \{20$, $50$, $100$, $1\,000$, $5\,000$, $10\,000$, $20\,000$, $50\,000$, $100\,000$, $130\,000$, $150\,000\}$,
taking $1-\alpha=0.1$ and $(w_0$, $w_1)$ $\in$ $\{(0,1), (0.1,0.9)$, $(0.2,0.8)$, $(0.3,0.7)$, $(0.4,0.6)$, $(0.5,0.5)$, $(0.6,0.4)$, $(0.7,0.3)$, $(0.8,0.2)$, $(0.9,0.1)\}$.
For fixed $N$, the probability of type II error is estimated using 100 samples of size $N$ of $\xi_1$ and $\xi_2$.
Using the asymptotic rejection region, we reject $H_0$ for all realizations and all parameter combinations, meaning that  the  probability of type II error is null
(since $H_1$ holds for all
parameter combinations). For the nonasymptotic test, the probability of type II errors are reported in Table
\ref{tabletest1}. For sample sizes less than $5\,000$, the probability of  type II error is always 1 (the nonasymptotic test always takes the wrong decision)
and the larger $w_1$ the larger the sample size $N$ needs to be to obtain a probability of  type II error of zero. In particular, if
$w_1=1$ (we estimate the $\avr_{\alpha}$ of the distribution) as much as $150\,000$ observations are needed to obtain a null probability of type II error.
However, if the sample size is sufficiently large, both tests always take the correct decision $\mathcal{R}(\xi_1) \neq \mathcal{R}( \xi_2 )$.
\begin{table}
\centering
\begin{tabular}{|c|c|c|c|c||c|c|c|c|}
\hline
   $N$    & {\tt{Low-As1}} & {\tt{Up-As1}} &  {\tt{Low-RSA1}} & {\tt{Up-RSA1}} & {\tt{Low-As2}} & {\tt{Up-As2}} &  {\tt{Low-RSA2}} & {\tt{Up-RSA2}}  \\
\hline
  $50$ & 11.20&  11.94   &-347.30& 146.23 & 21.30&    21.97 &-335.67&157.97\\
\hline
  $10^3$&11.49 &11.67 &  -68.65   &41.71  &21.49 &    21.67 &-58.37&51.99\\
\hline
  $10^4$ & 11.55  &  11.61   &  -13.79  & 21.11 & 21.55  &  21.61   &-3.71&31.19\\
\hline
  $10^5$ &11.57 &  11.59   &3.56& 14.59 &21.57 &  21.59   &13.58&24.62\\
\hline
  $1.5 \small{\times}10^5$ &11.57 & 11.59  & 5.03 &14.04  &21.57 &   21.59  &15.05&24.06\\
\hline
\end{tabular}
\caption{Average values of the asymptotic and nonasymptotic confidence bounds for  $\mathcal{R}(\xi_1)$ and $\mathcal{R}(\xi_2)$
when $\xi_1 \sim \mathcal{N}(10, 1; 0, 30)$ and $\xi_2 \sim \mathcal{N}(20, 1; 0, 30)$, $1-\alpha=0.1$.
For $\mathcal{R}( \xi_i )$, the average asymptotic confidence interval is [{\tt{Low-Asi}}, {\tt{Up-Asi}}] and
the average nonasymptotic confidence interval is  [{\tt{Low-RSAi}}, {\tt{Up-RSAi}}].}
\label{tabletest0}
\end{table}

\begin{table}
\centering
\begin{tabular}{|c|c|c|c|c||c|c|c|c|}
\hline
  $N$    & {\tt{Low-As1}} & {\tt{Up-As1}} &  {\tt{Low-RSA1}} & {\tt{Up-RSA1}} & {\tt{Low-As2}} & {\tt{Up-As2}} &  {\tt{Low-RSA2}} & {\tt{Up-RSA2}}  \\
\hline
 $50$ &10.45 &  10.98   &-230.29&  40.16& 20.47& 21.00    &-220.25&50.21\\
\hline
 $10^3$ &10.65 & 10.77    &-43.18&17.30  &20.65 &   20.77  &-33.18&27.29\\
\hline
 $10^4$ &10.70 &  10.74   &-6.33& 12.80 &20.70 &   20.74  &3.68&22.80\\
\hline
 $10^5$ & 10.71&  10.72   &5.33& 11.38 &20.71 &   20.72  & 15.33  &21.38\\
\hline
 $1.5 \small{\times}10^5$ & 10.71&   10.72  &6.32& 11.26  & 20.71&   20.72  &16.32&21.25\\
\hline
\end{tabular}
\caption{Average values of the asymptotic and nonasymptotic confidence bounds for  $\mathcal{R}(\xi_1)$ and $\mathcal{R}(\xi_2)$
when $\xi_1 \sim \mathcal{N}(10, 1; 0, 30)$ and $\xi_2 \sim \mathcal{N}(20, 1; 0, 30)$, $1-\alpha=0.5$.
For $\mathcal{R}( \xi_i )$, the average asymptotic confidence interval is [{\tt{Low-Asi}}, {\tt{Up-Asi}}] and
the average nonasymptotic confidence interval is  [{\tt{Low-RSAi}}, {\tt{Up-RSAi}}].}
\label{tabletest01}
\end{table}

\begin{table}
\centering
\begin{tabular}{|c|c|c|c|c|c|c|c|}
\cline{2-8}
 \multicolumn{1}{c}{}  &\multicolumn{7}{|c|}{Sample size $N$}\\
 \hline
$(w_0,w_1)$& $5\,000$ & $10\,000$ &  $20\,000$ & $50\,000$ &  $100\,000$ & $130\,000$ & $150\,000$ \\
\hline
(0.0, 1.0)      & 1  &  1 &   1 &  1 &  1 & 1  &0\\
\hline
(0.1, 0.9) & 1  &  1 &   1 &  1 &  1 & 0  &   0 \\
\hline
(0.2, 0.8) & 1  &  1 &   1 &  1 &  0 & 0   &   0 \\
\hline
(0.3, 0.7) & 1  &  1 &   1 &  1 &  0 &  0  &   0 \\
\hline
(0.4, 0.6) & 1  &  1 &   1 &  1 &  0 &  0  &   0 \\
\hline
(0.5, 0.5) & 1  & 1  &   1 &  0 &  0 &  0  &   0 \\
\hline
(0.6, 0.4) & 1  &  1 &   1 &  0 &  0 &  0  &   0 \\
\hline
(0.7, 0.3) & 1  &  1 &   0 &  0 &  0 &   0 &   0 \\
\hline
(0.8, 0.2) & 1  &  0 &   0 &  0 &  0 &  0  &   0  \\
\hline
(0.9, 0.1) & 0  & 0  &   0 & 0  &  0 &  0  &  0      \\
\hline
\end{tabular}
\caption{Empirical probabilities of type II error for  tests  \eqref{test1} and \eqref{test2} using a nonasymptotic rejection region when  $\xi_1 \sim \mathcal{N}(10, 1; 0, 30)$, $\xi_2 \sim \mathcal{N}(20, 1; 0, 30)$, and $1-\alpha=0.1$.}
\label{tabletest1}
\end{table}

Given (possibly small) samples of size $N$ of $\xi_1$ and $\xi_2$, to know which of the two risks $\mathcal{R}(\xi_1)$
and $\mathcal{R}(\xi_2)$  is the smallest, we now consider the test
\begin{equation}\label{test2}
H_0: \mathcal{R}(\xi_1) \geq  \mathcal{R}(\xi_2) \;\;\mbox{against }H_1: \mathcal{R}(\xi_1) < \mathcal{R}(\xi_2).
\end{equation}
Computing $\mathcal{R}(\xi_1)$ and $\mathcal{R}(\xi_2)$ with a very large sample (of size $10^6$)
of $\xi_1$ and $\xi_2$
either with SAA or RSA or looking at Figure \ref{figuredensity},
we know that $\mathcal{R}(\xi_1)<\mathcal{R}(\xi_2)$. We again analyze the probability of  type II error using the asymptotic and nonasymptotic rejection regions
when the decision is taken on the basis of a much smaller sample. For the nonasymptotic test, the empirical probabilities of  type II error for various sample sizes (estimated, for fixed $N$, using 100 samples
of size $N$ of $\xi_1$ and $\xi_2$) are exactly those obtained for test \eqref{test1} and are given in Table \ref{tabletest1}.
The asymptotic test again always takes the correct decision $\mathcal{R}(\xi_1)<\mathcal{R}(\xi_2)$ while
a large sample size is needed to always take the correct decision using the nonasymptotic test, as large as $150\,000$ for
$w_1=1$.

We now consider tests \eqref{test1} and \eqref{test2} for case (II).
In this case, there is a larger overlap between the distributions of $\xi_1$ and  $\xi_2$.
However, from Figure \ref{figuredensity} and computing  $\mathcal{R}(\xi_1)$ and $\mathcal{R}(\xi_2)$ with a very large sample (say of size $10^6$)
of $\xi_1$ and $\xi_2$ either using SAA or RSA,
we check that we have again $\mathcal{R}(\xi_2)>\mathcal{R}(\xi_1)$ for all
values of $(w_0, w_1)$. The empirical probabilities of type II error are null for the asymptotic test
for all sample sizes $N$ tested
while for the nonasymptotic test, the probabilities of  type II error are given in Table \ref{tabletest2} for both
tests \eqref{test1} and \eqref{test2}. As a result, here again, the asymptotic test always takes the correct decision $\mathcal{R}(\xi_1)<\mathcal{R}(\xi_2)$ while
a large sample size is needed to always take the correct decision using the nonasymptotic test (as large as $110\,000$ for
$w_1=1$). For sample sizes less than $10\,000$, the empirical probability of  type II error with the nonasymptotic test is 1.
We see that for fixed $(w_0, w_1)$, in most cases, we need a larger sample size than in case (I) to
have a null probability of  type II error, due the overlap of the two distributions.

\begin{table}
\centering
\begin{tabular}{|c|c|c|c|c|c|}
\cline{2-6}
 \multicolumn{1}{c}{}  &\multicolumn{5}{|c|}{Sample size $N$}\\
 \hline
$(w_0,w_1)$& $10\,000$ &  $20\,000$ & $50\,000$ &  $100\,000$ & $110\,000$  \\
\hline
(0.0, 1.0)   &  1 &   1    &  1 &  1   &0\\
\hline
(0.1, 0.9)   &  1 &   1    &  1 &  0   &   0 \\
\hline
(0.2, 0.8)   &  1 &   1    &  1 &  0    &   0 \\
\hline
(0.3, 0.7)   &  1 &   1    &  1 &  0   &   0 \\
\hline
(0.4, 0.6)   &  1 &   1    &  1 &  0   &   0 \\
\hline
(0.5, 0.5)   & 1  &   1    &  1 &  0   &   0 \\
\hline
(0.6, 0.4)   &  1 &   1    &  0 &  0   &   0 \\
\hline
(0.7, 0.3)   &  1 &   1    &  0 &  0   &   0 \\
\hline
(0.8, 0.2)   &  1 &   1 &  0 &  0   &   0  \\
\hline
(0.9, 0.1)   & 0.06 & 0    &  0  &  0  &  0      \\
\hline
\end{tabular}
\caption{Empirical probabilities of  type II error for  tests  \eqref{test1} and \eqref{test2} using a nonasymptotic rejection region when  $\xi_1 \sim \mathcal{N}(5, 1; 0, 30)$, $\xi_2 \sim \mathcal{N}(10, 25; 0, 30)$, and $1-\alpha=0.1$.}
\label{tabletest2}
\end{table}

We finally consider case (III) where the choice between $\xi_1$ and $\xi_2$
is more delicate and depends on the pair $(w_0, w_1)$.
In this case, we have (see Figure \ref{figuredensity}) $\mathbb{E}[\xi_2]>\mathbb{E}[\xi_1]$
and $\mbox{\avr}_{\alpha}( \xi_2 )<\mbox{\avr}_{\alpha}( \xi_1 )$ for $1-\alpha = 0.1$. It follows that for pairs $(w_0, w_1)$
summing to one, when
$$
0 \leq w_0 < w_{{\tt{Crit}}}=\frac{\mbox{\avr}_{\alpha}(\xi_1)-\mbox{\avr}_{\alpha}(\xi_2)}{\mathbb{E}[\xi_2]-\mathbb{E}[\xi_1]+\mbox{\avr}_{\alpha}(\xi_1)-\mbox{\avr}_{\alpha}(\xi_2)}
$$ then
$\mathbb{R}(\xi_2)<\mathbb{R}(\xi_1)$ and for $w_0 > w_{{\tt{Crit}}}$ then  $\mathbb{R}(\xi_2)>\mathbb{R}(\xi_1)$.
The empirical estimation of $w_{{\tt{Crit}}}$ (estimated using a sample of size $10^6$) is 0.71.
For $w_0$ close to $w_{{\tt{Crit}}}$,  $\mathcal{R}(\xi_1)$ and $\mathcal{R}(\xi_2)$ are close and
the probability of type II error for test \eqref{test1} can be large even for the asymptotic test if the sample size
is not sufficiently large. More precisely, for the asymptotic test, when $(w_0, w_1)=(0.7, 0.3)$,
the empirical probabilities of type II error are given in Table \ref{tabletest3} for $N \in \{20, 50, 100, 200, 500, 1\,000, 2\,000, 5000\}$,
and are $0.28, 0.11, 0.01$, and $0$ for respectively
$N=10\,000, 20\,000, 40\,000$, and $45\,000$.
For the remaining values of $w_0$ the empirical probabilities of type II error are given in Table \ref{tabletest3} for the asymptotic test.
For the nonasymptotic test, the empirical probabilities of  type II error for test \eqref{test1} are given in Table \ref{tabletest4}.
It is seen that much larger sample sizes are needed in this case to obtain a small probability of type II error.
However, for the sample size  $N=5\small{\times}10^6$, the nonasymptotic test still always takes the wrong decision
for the difficult case $w_0 = 0.7$.
\begin{table}
\centering
\begin{tabular}{|c|c|c|c|c|c|c|c|c|}
\cline{2-9}
 \multicolumn{1}{c}{}  &\multicolumn{8}{|c|}{Sample size $N$}\\
 \hline
$(w_0,w_1)$& $20$ & $50$ & $100$ & $200$ & $500$ & $1000$ & $2000$ & $5000$\\
\hline
(0.0, 1.0)   & 0.13  &  0.01  &  0 &  0   &0&  0 &0&0\\
\hline
(0.1, 0.9)   & 0.24  &   0    &  0 &   0  & 0   &0&0&0\\
\hline
(0.2, 0.8)   &  0.32 &   0.03    & 0  &  0   & 0  &0&0&0 \\
\hline
(0.3, 0.7)   & 0.50  &   0.07   &  0 &   0  &   0 &0&0&0\\
\hline
(0.4, 0.6)   &  0.61 &    0.11  &  0 &   0  &   0 &0&0&0\\
\hline
(0.5, 0.5)   & 0.71 &   0.46    & 0.11   & 0.01    & 0   &0&0&0\\
\hline
(0.6, 0.4)   & 0.86  &   0.69   &  0.50  &    0.28 &   0.01 &0&0&0\\
\hline
(0.7, 0.3)   &  0.83 &   0.85   &  0.90  &  0.91   & 0.87   &0.89&0.69&0.53\\
\hline
(0.8, 0.2)   &  0.71 &   0.71   &  0.65  &  0.29   &  0.07  &0&0 &0\\
\hline
(0.9, 0.1)   & 0.57 &   0.34    &  0.09  &    0   & 0 & 0 &0&0\\
\hline
\end{tabular}
\caption{Empirical probabilities of  type II error for  test  \eqref{test1} using an asymptotic rejection region when  $\xi_1 \sim \mathcal{N}(10, 49; 0, 30)$, $\xi_2 \sim \mathcal{N}(14, 0.25; 0, 30)$, and $1-\alpha=0.1$.}
\label{tabletest3}
\end{table}

\begin{table}
\centering
\begin{tabular}{|c|c|c|c|c|c|c|}
\cline{2-7}
 \multicolumn{1}{c}{}  &\multicolumn{6}{|c|}{Sample size $N$}\\
 \hline
$(w_0,w_1)$& $100\,000$ & $300\,000$ & $500\,000$ & $700\,000$ & $10^6$ & $5\small{\times}10^6$ \\
\hline
(0.0, 1.0)   & 1  & 0.83   & 0  &  0   &0&0   \\
\hline
(0.1, 0.9)   & 1  &     1  & 0  &   0  &   0 &0\\
\hline
(0.2, 0.8)   & 1  &    1   & 0  &    0 &   0& 0\\
\hline
(0.3, 0.7)   & 1  &   1    & 0  &   0  &   0 &0\\
\hline
(0.4, 0.6)   & 1  &   1    &  1 &   0  &   0 &0\\
\hline
(0.5, 0.5)   & 1 &    1   &   1 &  1   &   0 &0\\
\hline
(0.6, 0.4)   &  1 &   1   &   1 &   1  &   1 &0\\
\hline
(0.7, 0.3)   &  1 &   1   &   1 &   1  &   1 &1\\
\hline
(0.8, 0.2)   &  1 &   1   &   1 &   1  &   1 & 0\\
\hline
(0.9, 0.1)   & 0 &    0   &   0 &   0  &    0  & 0 \\
\hline
\end{tabular}
\caption{Empirical probabilities of  type II error for  test  \eqref{test1} using a nonasymptotic rejection region when  $\xi_1 \sim \mathcal{N}(10, 49; 0, 30)$, $\xi_2 \sim \mathcal{N}(14, 0.25; 0, 30)$, and $1-\alpha=0.1$.}
\label{tabletest4}
\end{table}
For $w_0<w_{{\tt{Crit}}}$ with $w_0 \in \{0.0, 0.1, 0.2, 0.3, 0.4, 0.5, 0.6, 0.7\}$,
we are interested in the probability of  type II error of the test
\begin{equation}\label{test3}
H_0: \mathcal{R}(\xi_2) \geq \mathcal{R}(\xi_2) \;\;\mbox{against }H_1: \mathcal{R}(\xi_2) < \mathcal{R}(\xi_1)
\end{equation}
since $H_1$ holds in this case.
Using the asymptotic rejection region, except for the difficult case $w_0=0.7$ where the probability of  type II error
is still positive for $N=30\,000$, the empirical  probability of type II error is null for small to moderate (at most $1\,000$) sample sizes; see Table \ref{tabletest5}.
Using the nonasymptotic rejection region, much larger sample sizes are necessary to obtain a small probability of type II error, see Table \ref{tabletest6}.

\begin{table}[H]
\centering
\begin{tabular}{|c||c|c|c|c|c|c|c|c|}
\cline{2-9}
 \multicolumn{1}{c}{}  &\multicolumn{8}{|c|}{Sample size $N$}\\
 \hline
$(w_0,w_1)$& $20$ & $100$ & $200$ & $1\,000$ & $5\,000$ & $10\,000$ & $30\,000$ & $50\,000$\\
\hline
(0.0, 1.0)   &  0.11     &  0     &  0  &   0  &0 & 0&0&0\\
\hline
(0.1, 0.9)   &  0.26  &     0  &  0  &   0  & 0  & 0&0&0\\
\hline
(0.2, 0.8)   &   0.28    &   0    & 0   & 0    &  0  &0&0&0\\
\hline
(0.3, 0.7)   &    0.35   &    0   &  0 &   0  &  0  &0&0&0\\
\hline
(0.4, 0.6)   &   0.51    &    0   &  0  &   0  &  0  &0&0&0\\
\hline
(0.5, 0.5)   &    0.66   &    0.2   & 0.01   & 0    & 0  &0&0&0\\
\hline
(0.6, 0.4)   &    0.83   &     0.53  &  0.22  & 0    &  0  &0&0&0\\
\hline
(0.7, 0.3)   &  0.87     &     0.88  &  0.90  &   0.81  & 0.61   &0.39&0.05&0\\
\hline
\end{tabular}
\caption{Empirical probabilities of  type II error for  test  \eqref{test3} using an asymptotic rejection region when  $\xi_1 \sim \mathcal{N}(10, 49; 0, 30)$, $\xi_2 \sim \mathcal{N}(14, 0.25; 0, 30)$, and $1-\alpha=0.1$.}
\label{tabletest5}
\end{table}

\begin{table}[H]
\centering
\begin{tabular}{|c|c|c|c|c|c|c|c|}
\cline{2-8}
 \multicolumn{1}{c}{}  &\multicolumn{7}{|c|}{Sample size $N$}\\
 \hline
$(w_0,w_1)$& $300\,000$ & $400\,000$ & $500\,000$ & $700\,000$ & $900\,000$ & $2\small{\times}10^6$ & $5\small{\times}10^6$\\
\hline
(0.0, 1.0)   &  0     &   0    &  0  &  0   & 0&0 &0\\
\hline
(0.1, 0.9)   &  0.85  &   0    &   0 &   0  &  0 &0 &0\\
\hline
(0.2, 0.8)   &  1     &   0    &  0  &    0 &   0 &0&0\\
\hline
(0.3, 0.7)   &  1     &   1    &  0  &   0  &   0 &0&0\\
\hline
(0.4, 0.6)   &  1     &   1    &  1  &   0  &   0 &0&0\\
\hline
(0.5, 0.5)   &  1     &   1    &  1  &  1   &   0 &0&0\\
\hline
(0.6, 0.4)   &  1     &   1    &  1  &   1  &   1 &0.75&0\\
\hline
(0.7, 0.3)   &  1     &   1    &  1  &   1  &   1 &1&1\\
\hline
\end{tabular}
\caption{Empirical probabilities of  type II error for  test  \eqref{test3} using a nonasymptotic rejection region when  $\xi_1 \sim \mathcal{N}(10, 49; 0, 30)$, $\xi_2 \sim \mathcal{N}(14, 0.25; 0, 30)$, and $1-\alpha=0.1$.}
\label{tabletest6}
\end{table}
For $w_0>w_{{\tt{Crit}}}$ with $w_0 \in \{0.8, 0.9\}$,
we are interested in the probability of  type II error of test \eqref{test2}
since $H_1$ holds in this case. The probability of  type II error for this test using the nonasymptotic rejection region
is $1$ (resp. $0$) for $(N, w_0, w_1)=(10^6, 0.8, 0.2)$ (resp. $(N, w_0, w_1)=(10^6, 0.9, 0.1)$),
and null  for $(N, w_0, w_1)=(5\small{\times}10^6, 0.8, 0.2), (5\small{\times}10^6, 0.9, 0.1)$, meaning
that we always take the correct decision $\mathcal{R}(\xi_1)<\mathcal{R}(\xi_2)$ for $N=5\small{\times}10^6$ and
$(w_0, w_1)=(0.8, 0.2), (0.9, 0.1)$.
Using the asymptotic rejection region, the probabilities of  type II errors are null already for $N=1\,000$.
For $N=100$, we get probabilities of  type II error of $0.09$ and $0.42$ for respectively
$(w_0, w_1)=(0.8, 0.2)$ and $(w_0, w_1)=(0.9, 0.1)$.

\subsection{Tests on the optimal value of two risk averse stochastic programs} \label{example2}

We illustrate the results of Sections \ref{sec:asym} and \ref{testssection} on the risk averse problem
\begin{equation}\label{definstance3}
\left\{
\begin{array}{l}
\min w_0 \mathbb{E}[ \sum_{i=1}^n \xi_i x_i ]  + {w_1}\left( x_0 + \mathbb{E}\left[ \frac{1}{1-\alpha}[ \sum_{i=1}^n \xi_i x_i  -  x_0]_+  \right] \right) + \lambda_0 \|[x_0;x_1;...;x_n]\|_2^2 + c_0\\
-1 \leq x_0 \leq 1, \sum_{i=1}^n x_i = 1, \;x_i \geq 0, i=1,\ldots,n,
\end{array}
\right.
\end{equation}
where
$\xi$ is a random vector with i.i.d. Bernoulli entries:
$\mathbb{P}(\xi_i=1)=\Psi_i, \;\mathbb{P}(\xi_i=-1)=1-\Psi_i$, with $\Psi_i$ randomly drawn over $[0,1]$.\footnote{Of course $c_0$ can be ignored to solve the problem. However, it will be used to define
several instances and test the equality about their optimal values.}
This problem amounts to minimizing a linear combination of the expectation and the $\avr_{\alpha}$ of $\sum_{i=1}^n \xi_i x_i$ plus a penalty obtained taking $\lambda_0>0$. Therefore, it has a unique
optimal solution.
SAA formulation of this problem as well as the quadratic problems of each iteration of RSA were
solved numerically using Mosek Optimization Toolbox \cite{mosek}.
We will again use the rejection regions given in Section \ref{natest} (resp. given by \eqref{rejectionregionasK2}) in the nonasymptotic (resp. asymptotic) case.

To illustrate Theorem \ref{th-minopt}, for several instances of this problem, we report in Figures \ref{figureqq2} and  \ref{figureqq3}
the QQ-plots of the empirical distribution of the SAA optimal value for problem \eqref{definstance3}
versus the normal distribution with parameters the empirical mean and standard deviation
of this empirical distribution for various sample sizes $N$. We observe again that this distribution
is well approximated by a Gaussian distribution even when the sample size is small ($N=20$):
for all problem sizes ($n=100$, $n=500$, $n=10^3$, and $n=10^4$)
and the smallest sample size tested ($N=20$), the
Jarque-Bera test accepts the null hypothesis (the data comes from a normal distribution with unknown mean and variance)
at the 5\% significance level.

\begin{figure}
\begin{tabular}{ll}
\includegraphics[scale=0.5]{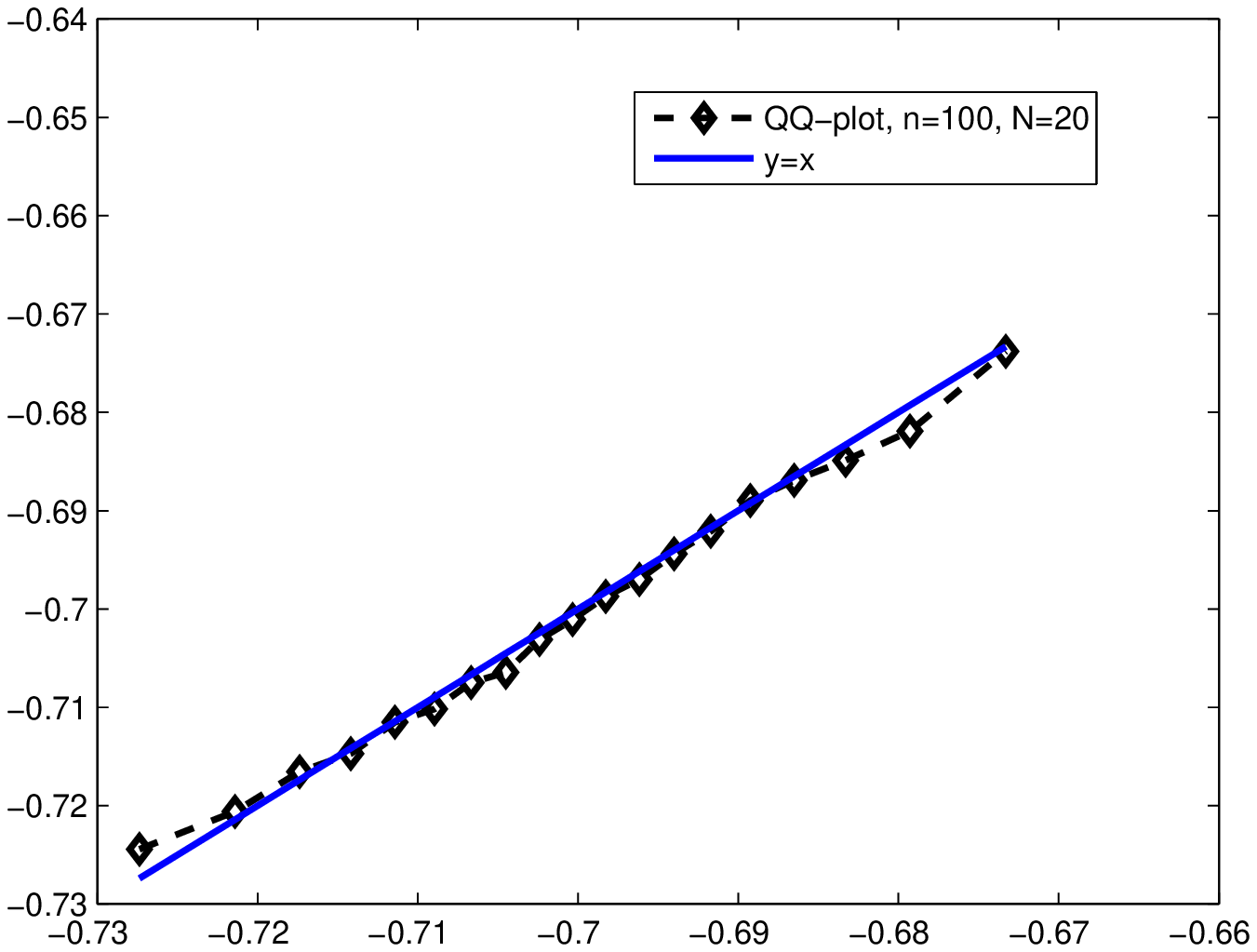}
&
\includegraphics[scale=0.5]{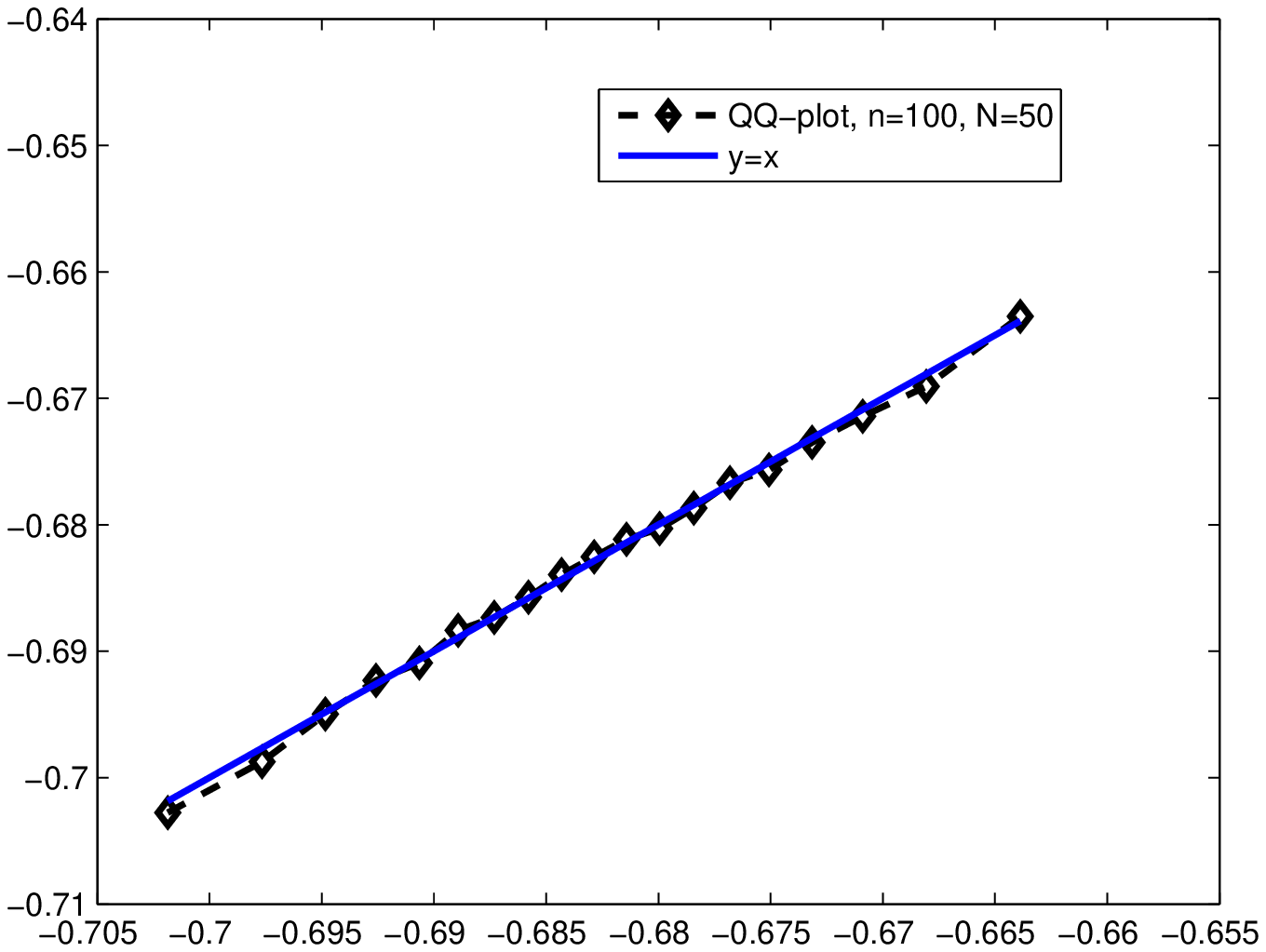}\\
\includegraphics[scale=0.5]{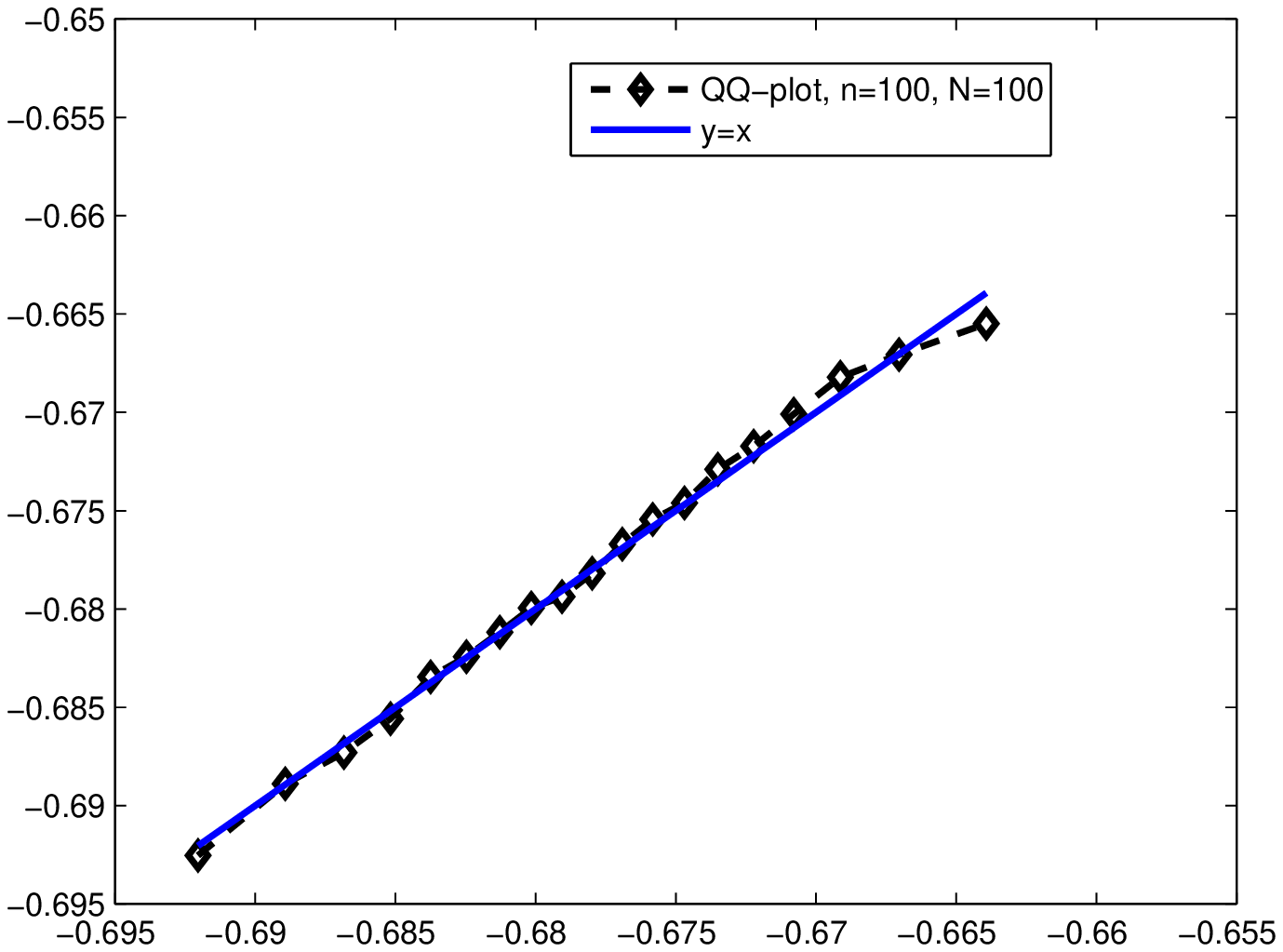}
&
\includegraphics[scale=0.5]{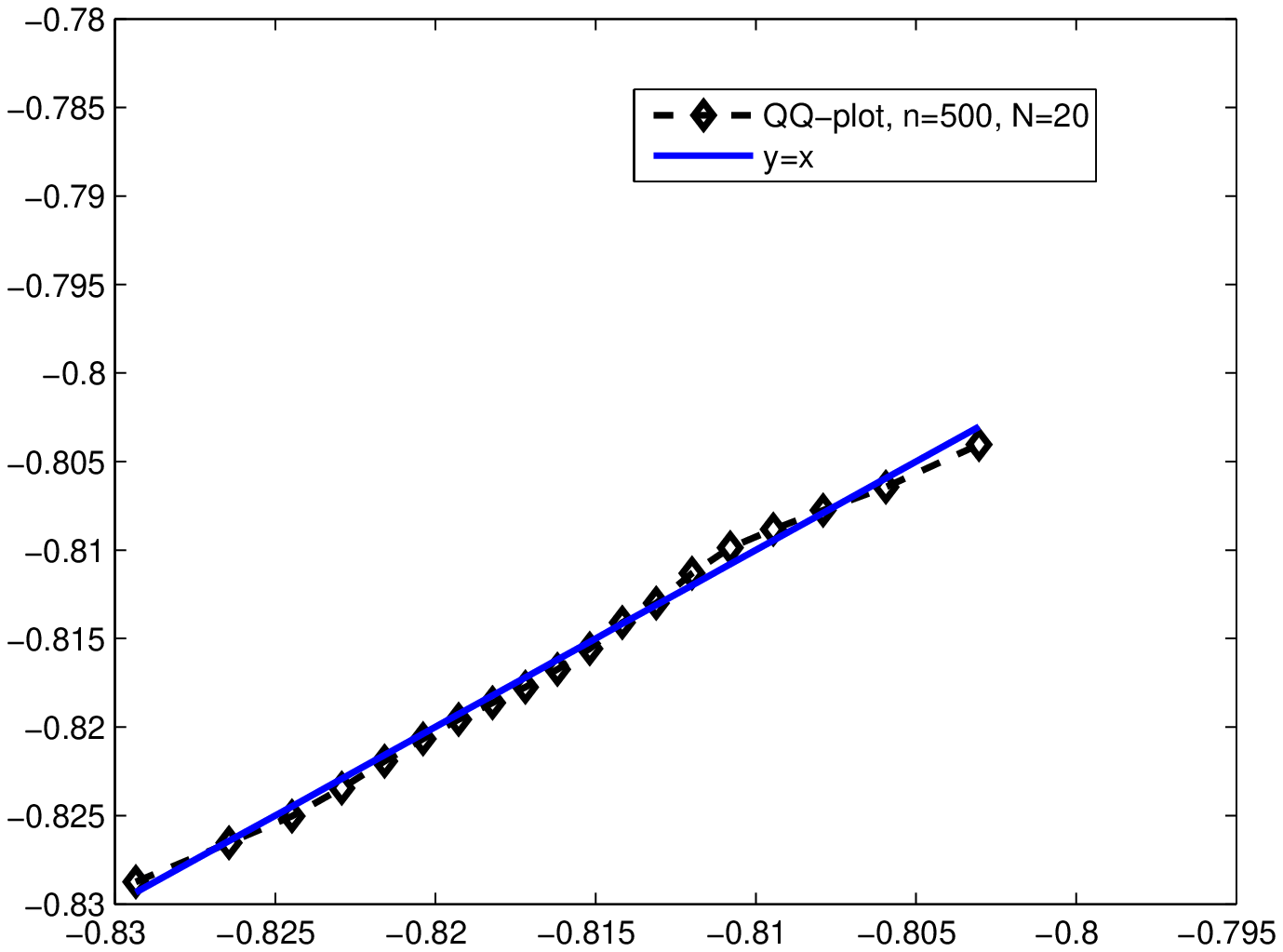}\\
\includegraphics[scale=0.5]{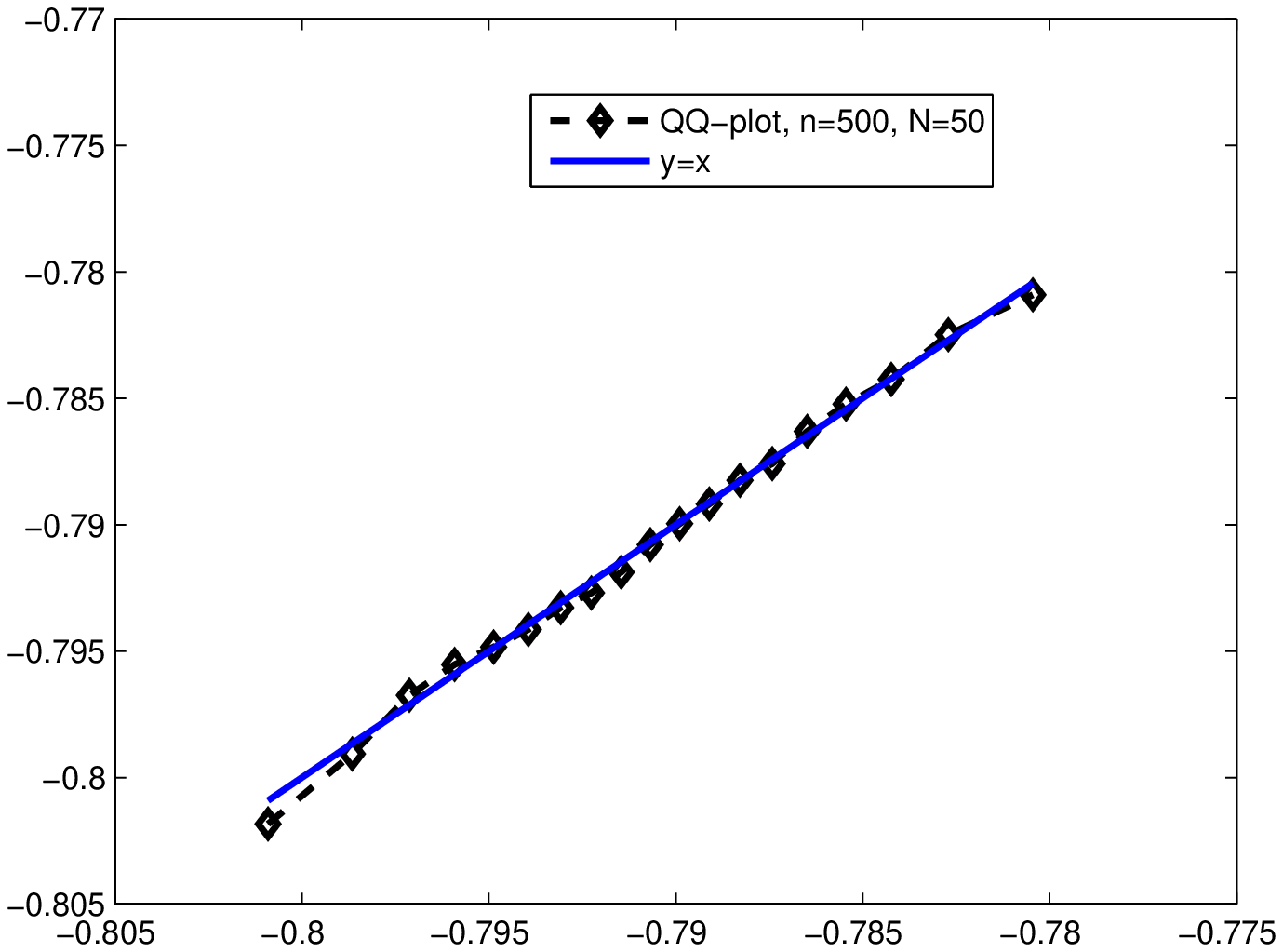}
&
\includegraphics[scale=0.5]{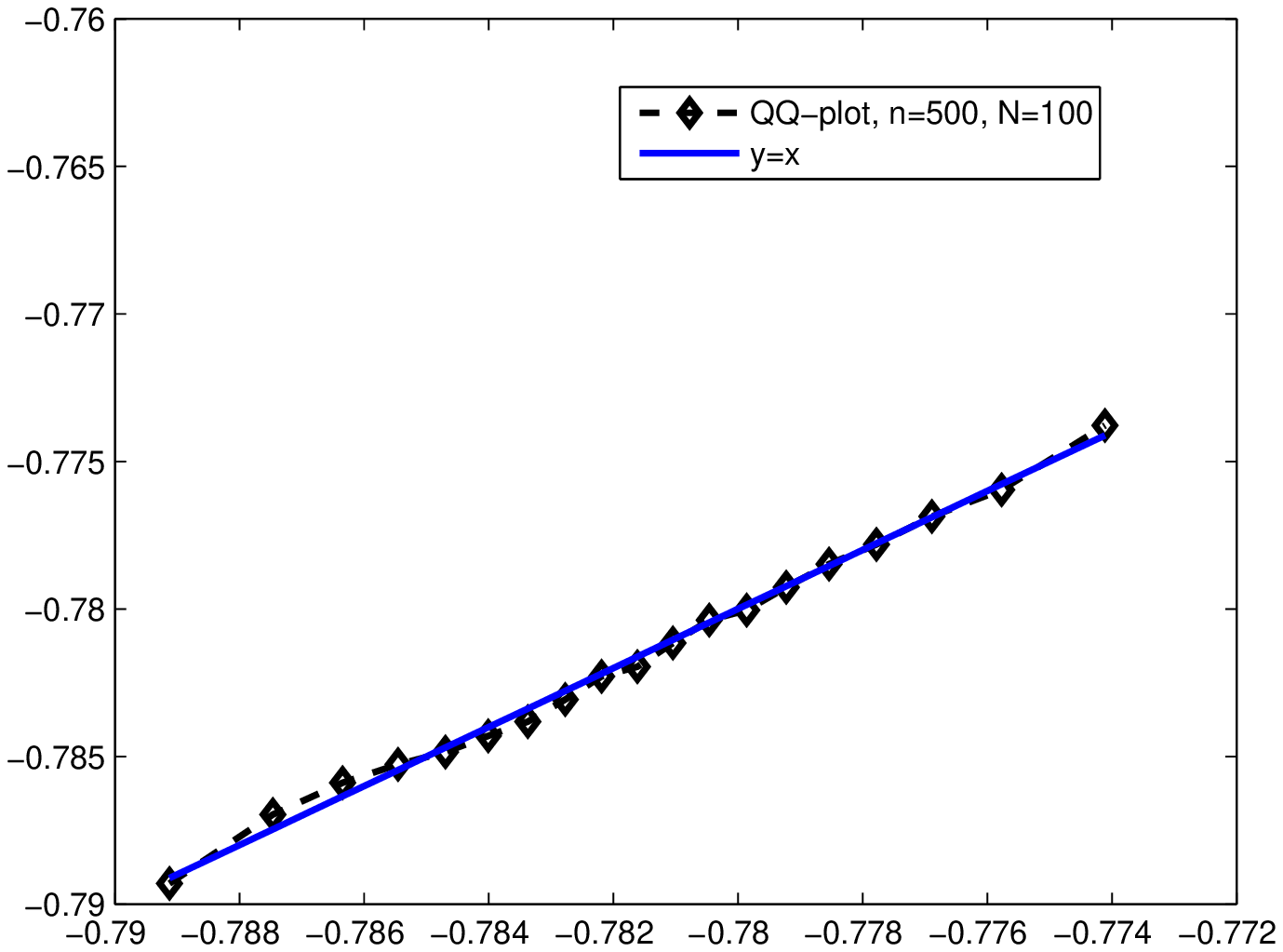}
\end{tabular}
\caption{QQ-plots of the empirical distribution of the SAA optimal value for problem \eqref{definstance3}
versus the normal distribution with parameters the empirical mean and standard deviation
of this empirical distribution for instances with $w_0=0.9, w_1=0.1, 1-\alpha = 0.1, \lambda_0=2$,
and various sample and problem sizes.}
\label{figureqq2}
\end{figure}

\begin{figure}
\begin{tabular}{ll}
\includegraphics[scale=0.5]{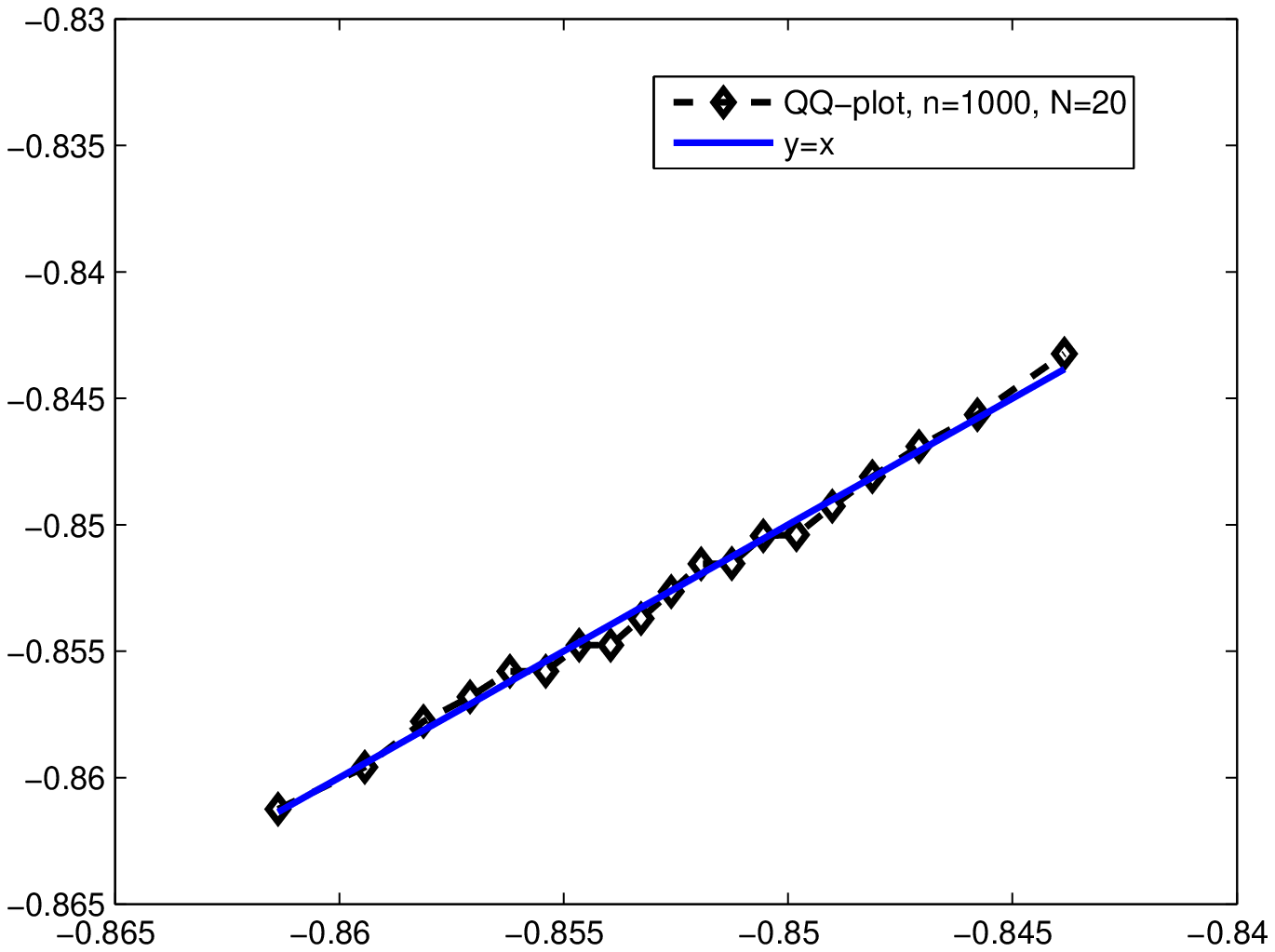}
&
\includegraphics[scale=0.5]{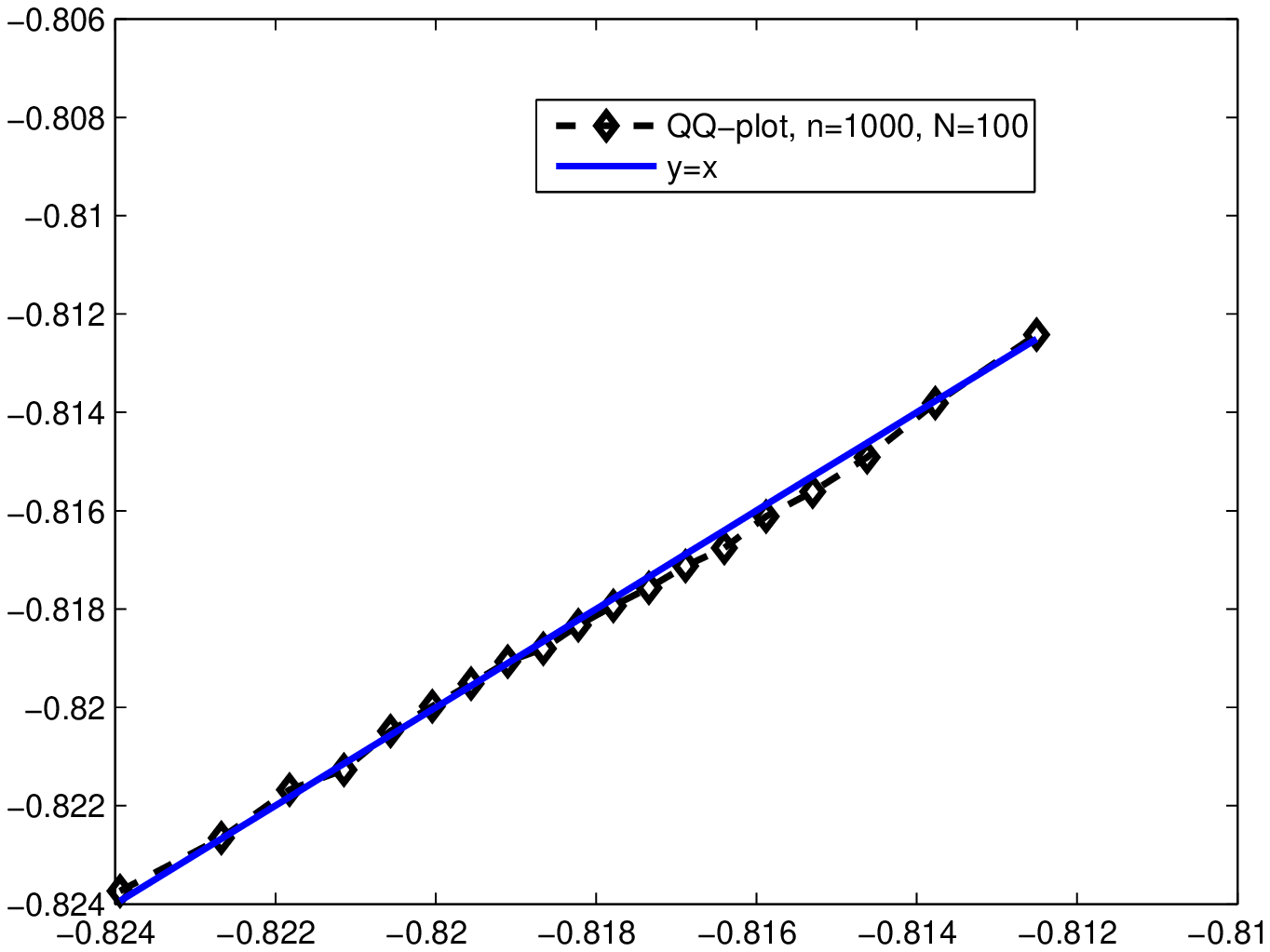}\\
\includegraphics[scale=0.5]{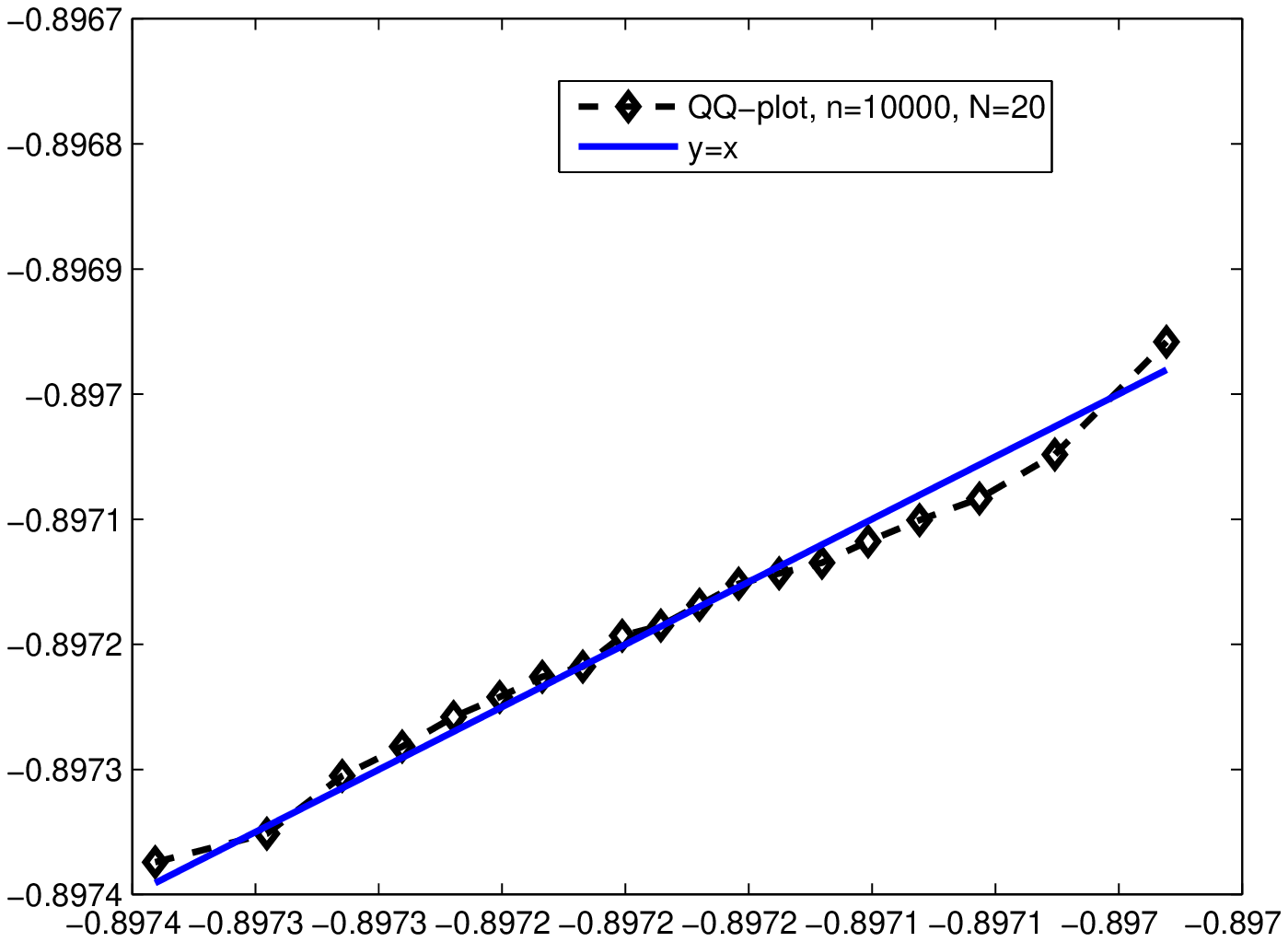}
&
\includegraphics[scale=0.5]{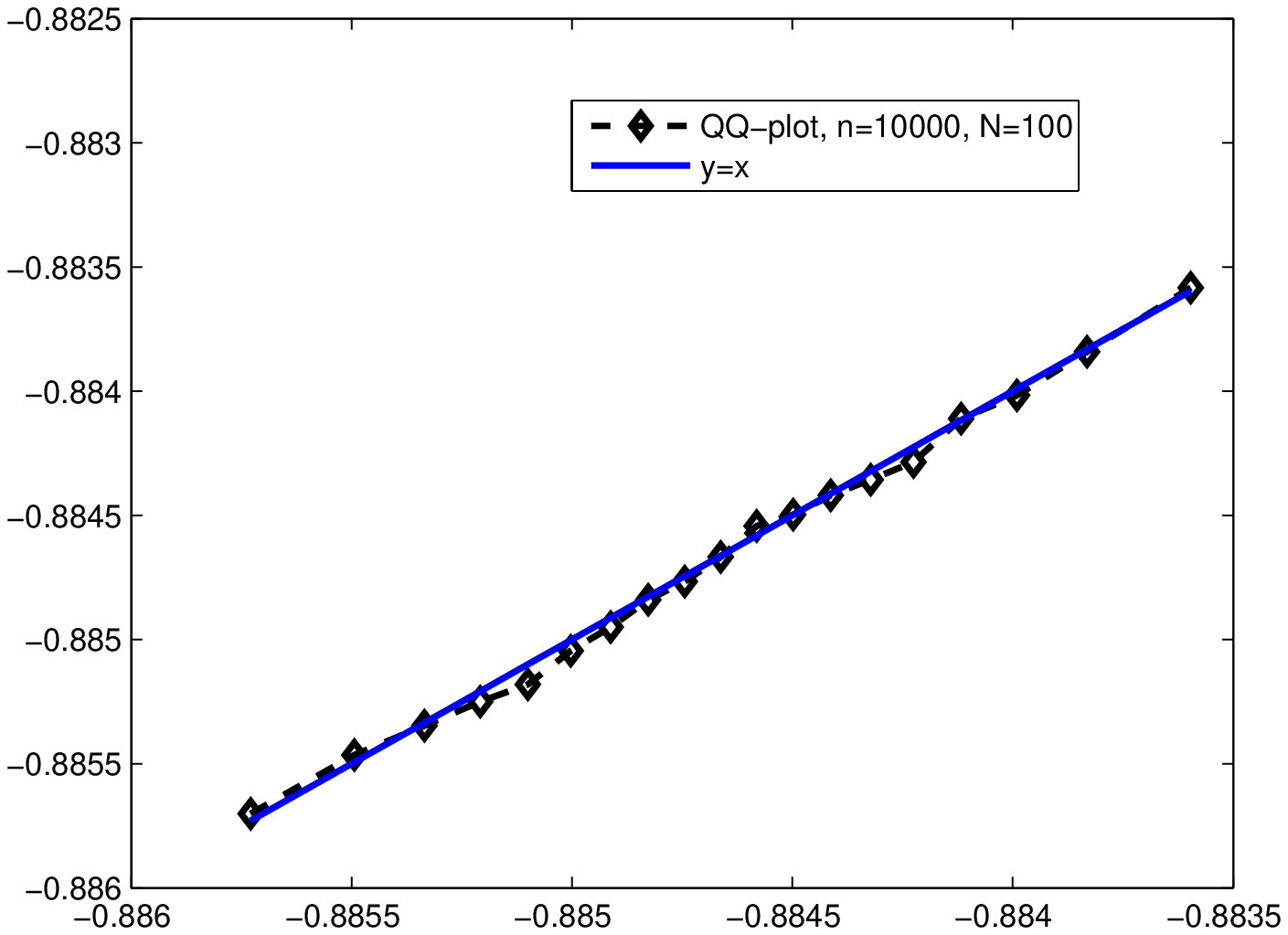}
\end{tabular}
\caption{QQ-plots of the empirical distribution of the SAA optimal value for problem \eqref{definstance3}
versus the normal distribution with parameters the empirical mean and standard deviation
of this empirical distribution for instances with $w_0=0.9, w_1=0.1, 1-\alpha = 0.1, \lambda_0=2$,
and various sample and problem sizes.}
\label{figureqq3}
\end{figure}

We now define in Table \ref{definstances} six instances $\mathcal{I}_1, \mathcal{I}_2, \mathcal{I}_3, \mathcal{I}_4, \mathcal{I}_5$,
and $\mathcal{I}_6$ of problem \eqref{definstance3}.

\begin{table}
\centering
\begin{tabular}{|c||c|c|c|c|}
\hline
{\tt{Instance}} &   $(w_0, w_1, 1-\alpha, \lambda_0)$  &  $c_0$ &  $n$  &  $(\mathbb{P}(\xi_i = 1))_i$  \\
\hline
$\mathcal{I}_1$ & $(0.9,0.1, 0.1, 2)$    &   $0$ &  $100$    &  $\Psi_1$   \\
\hline
$\mathcal{I}_2$ & $(0.9,0.1, 0.1, 2)$    &   $0$ &  $100$    &  $0.8 \Psi_1$   \\
\hline
$\mathcal{I}_3$ & $(0.9,0.1, 0.1, 2)$    &   $-3$ &  $100$    &  $0.8 \Psi_1$  \\
\hline
$\mathcal{I}_4$ & $(0.9,0.1, 0.1, 2)$    &   $0$ &  $500$    &  $\Psi_2$  \\
\hline
$\mathcal{I}_5$ & $(0.9,0.1, 0.1, 2)$    &   $0$ &  $500$    &  $0.8 \Psi_2$   \\
\hline
$\mathcal{I}_6$ & $(0.9,0.1, 0.1, 2)$    &   $-3$ &  $500$    &  $0.8 \Psi_2$\\
\hline
\end{tabular}
\caption{Definition of instances $\mathcal{I}_1$, $\mathcal{I}_2$, $\mathcal{I}_3$, $\mathcal{I}_4$, $\mathcal{I}_5$, and $\mathcal{I}_6$ of problem \eqref{definstance3}
($\Psi_1$ and $\Psi_2$ are vectors with entries drawn independently and randomly over $[0,1]$).}
\label{definstances}
\end{table}

We first compare the estimation of the optimal value of $\mathcal{I}_2$
using RSA and SAA.
For the RSA algorithm, we take $\|\cdot \| = \| \cdot \|_2 = \| \cdot  \|_*$ and (see \cite{guigues2016})
$L=\sqrt { \left( \frac{w_1 \alpha }{ 1 - \alpha } \right)^2   + n (w_0 + \frac{w_1}{1-\alpha})^2}+2\lambda_0$,
$M_1=2(w_0 + \frac{w_1}{1-\alpha})$,
$M_2=\sqrt{\left(\frac{w_1}{1-\alpha}\right)^2  + 4n \left( w_0 + \frac{w_1}{1-\alpha} \right)^2 }$.
The average approximate optimal value of instance $\mathcal{I}_2$ (averaging taking 100 samples of $\xi^N$)
using RSA and SAA is given in Table \ref{tablegn2} for various sample sizes $N$.
These values increase (resp. decrease) with the sample size for SAA (resp. RSA).
With SAA, the optimal value is already well approximated with small sample sizes while
large samples are needed to obtain a good approximation with RSA.
We also report in Table \ref{tabletest7}
the average values of the asymptotic and nonasymptotic confidence bounds (computed using 100 samples of $\xi^N$)
on the optimal values of instances
$\mathcal{I}_1$ and $\mathcal{I}_2$ and various sample sizes.\footnote{The nonasymptotic confidence interval is
$[{\tt{Low}}(\Theta_2, \Theta_3, N), {\tt{Up}}(\Theta_1, N)]$ with ${\tt{Low}}(\Theta_2, \Theta_3, N), {\tt{Up}}(\Theta_1, N)$ given by
\eqref{lowsmd1}, \eqref{upsmd1} and $\Theta_1=2\sqrt{\ln(2/\beta)}$, $\Theta_3=2\sqrt{\ln(4/\beta)}$,
and $\Theta_2$ satisfying $e^{1-\Theta_2^2} + e^{-\Theta_2^2/4}=\frac{\beta}{4}$.
The asymptotic confidence interval for \eqref{definstance3} is $[\hat \vv_N - \Phi^{-1}(1-\beta/2) \frac{\hat \nu_N }{\sqrt{N}} \hat \vv_N + \Phi^{-1}(1-\beta/2) \frac{\hat \nu_N }{\sqrt{N}}]$.}
Knowing that the optimal values of $\mathcal{I}_1$ and $\mathcal{I}_2$, estimated using SAA with a sample of size $10^6$, are
respectively $\vv_1=-0.6515$ and $\vv_2=-0.6791$, we observe that the asymptotic confidence interval is in mean much closer to the optimal value and of small width
while large samples are needed to obtain a nonasymptotic confidence interval of small width.
However,
the confidence bounds
on the optimal value obtained using RSA are almost independent on the problem size and as for
the one dimensional problem of the previous section the sample size $N=10^5$ provides confidence intervals
of small width and allows us to have small probabilities of type I and type II errors for nonasymptotic tests on the optimal value
of two instances of \eqref{definstance3} if their optimal values are sufficiently distant (see Lemmas \ref{tIItest1}, \ref{tIItest2}, and \ref{tIItest3}).
To check that and the superiority of the asymptotic tests for problems of moderate sizes ($n=100$ and $n=500$),
we compare the empirical probabilities of type II error of several tests of form \eqref{deftest1} with $K=2$ for which $H_1$ holds
and where $\vv_i$ is the optimal value of instance $\mathcal{I}_i$.

\begin{table}
\centering
\begin{tabular}{|c||c|c|c|c|c|c|}
\hline
 {\tt{Method}}    & $N=20$ & $N=50$ &$N=10^2$ & $N=10^3$ &  $N=10^4$ & $N=10^5$    \\
\hline
 {\tt{SAA}} & -0.7205 &-0.6965 & -0.6883& -0.6799& -0.6791 &-0.6791 \\
\hline
 {\tt{RSA}} &-0.4615 & -0.5274 & -0.5646  & -0.6389  & -0.6654  &  -0.6738  \\
\hline
\end{tabular}
\caption{Average approximate optimal value of instance $\mathcal{I}_2$ (computed using 100 samples of $\xi^N$)
using SAA and RSA for various sample sizes $N$.}
\label{tablegn2}
\end{table}

\begin{table}
\centering
\begin{tabular}{|c||c|c|c|c||c|c|c|c|}
\hline
 $N$    & {\tt{Low-As1}} & {\tt{Up-As1}} &  {\tt{Low-RSA1}} & {\tt{Up-RSA1}} & {\tt{Low-As2}} & {\tt{Up-As2}} &  {\tt{Low-RSA2}} & {\tt{Up-RSA2}}  \\
\hline
 $20$ & -0.7207  & -0.6666 & -95.7926  &  2.5227  & -0.7443 &  -0.6967 & -95.8354  &  2.4799 \\
 \hline
 $50$ & -0.6888 &  -0.6475 & -60.8057 &   1.3743 &  -0.7148 &  -0.6781 & -60.8472  &  1.3329\\
\hline
 $10^2$ & -0.6752  & -0.6444 & -43.1779   & 0.7900 &  -0.7019  & -0.6746 & -43.2171 &   0.7508\\
\hline
 $10^3$ &  -0.6573 &  -0.6474 & -14.0952 &  -0.1913  & -0.6843 &  -0.6755 & -14.1269 &  -0.2230\\
\hline
 $10^4$ &  -0.6532  & -0.6501 &  -4.9019 &  -0.5051  & -0.6805  & -0.6777 &  -4.9307 &  -0.5339\\
\hline
 $10^5$ & -0.6520   &-0.6510  & -1.9947 &  -0.6043 &  -0.6796 &  -0.6787  & -2.0226  & -0.6322 \\
\hline
\end{tabular}
\caption{Average values of the asymptotic and nonasymptotic confidence bounds (computed using 100 samples of $\xi^N$) for instances
$\mathcal{I}_1$ and $\mathcal{I}_2$ and various sample sizes.
For instance $\mathcal{I}_i$,
the average asymptotic confidence interval is [{\tt{Low-Asi}}, {\tt{Up-Asi}}] and
the average nonasymptotic confidence interval is [{\tt{Low-RSAi}}, {\tt{Up-RSAi}}].}
\label{tabletest7}
\end{table}

More precisely, the empirical  probabilities of type II error of asymptotic and nonasymptotic tests of form
\begin{equation}\label{test4}
H_0: \vv_i  = \vv_j  \;\;\mbox{against }H_1: \vv_i  \neq \vv_j,
\end{equation}
 are reported in Table \ref{tabletest8}
(for all these tests, we check that $H_1$ holds computing $\vv_i$ solving the SAA problem of instance $\mathcal{I}_i$
with a sample of $\xi$ of size $10^6$: $\vv_1=-0.6515, \vv_2=-0.6791, \vv_3=-3.6791, \vv_4=-0.7725, \vv_5=,-0.7868$,
and $\vv_6=-3.7868$).

\begin{table}
\centering
\begin{tabular}{|c|c|c|c|c|c|c|c|c|}
\cline{4-9}
 \multicolumn{1}{c}{}  &  \multicolumn{1}{c}{}   & \multicolumn{1}{c}{}  &\multicolumn{6}{|c|}{Sample size $N$}\\
 \hline
 $H_0$ & $H_1$    & Test type & $20$ &  $50$ &  $10^2$ & $10^3$ & $10^4$ & $10^5$ \\
\hline
$\vv_1 = \vv_2$   &   $\vv_1 \neq \vv_2$ & {\tt{Asymptotic}}  &   0.72   &   0.45   &  0.29  &  0   & 0  & 0\\
\hline
$\vv_1 = \vv_2$   &   $\vv_1 \neq \vv_2$  &{\tt{Nonasymptotic}}  &   1   &  1    &   1 &   1  & 1  &1\\
\hline
$\vv_1 = \vv_3$ & $\vv_1 \neq \vv_3$&  {\tt{Asymptotic}} &   0   &   0   &  0  & 0    & 0&0\\
\hline
$\vv_1 = \vv_3$   &  $\vv_1 \neq \vv_3$ & {\tt{Nonasymptotic}} &    1  &  1    & 1   &  1   & 1 & 0 \\
\hline
$\vv_4 = \vv_5$   &   $\vv_4 \neq \vv_5$ & {\tt{Asymptotic}}  &  0.33    &  0.36    &  0.21  &  0   &0 &0\\
\hline
$\vv_4 = \vv_5$   &   $\vv_4 \neq \vv_5$  &{\tt{Nonasymptotic}}  &   1   &  1    &   1 &   1  &1 &1\\
\hline
$\vv_4 = \vv_6$ & $\vv_4 \neq \vv_6$&  {\tt{Asymptotic}} &   0   & 0     &  0  &  0   & 0&0\\
\hline
$\vv_4 = \vv_6$   &  $\vv_4 \neq \vv_6$ & {\tt{Nonasymptotic}} &    1  &  1    & 1   &  1   & 1 &0\\
\hline
\end{tabular}
\caption{Empirical probabilities of  type II error for  tests of form  \eqref{test4}.}
\label{tabletest8}
\end{table}

Though it was observed in \cite{guigues2016}, \cite{ioudnemgui15} that for sample sizes that are not
much larger than the problem size
the coverage probability of
the asymptotic confidence interval is much lower than the coverage probability of the nonasymptotic confidence interval
and than the target coverage probability, the asymptotic confidence bounds are much closer to each other and much closer to the optimal
value than the nonasymptotic confidence bounds. This explains why the probability of type II error of the asymptotic test
is much less than the probability of type II error of the nonasymptotic test, even for small sample sizes and a smaller sample
is needed to always take the correct decision $H_1$ with the asymptotic test, i.e., to obtain a null probability of type II error.
Of course, in both cases,  for fixed $N$, the empirical probability of type II error  depends on the distance between $\vv_i$  and $\vv_j$.

Similar conclusions can be drawn from Table \ref{tabletest9} which
reports the empirical probability of type II error for various tests of form
\begin{equation}\label{test5}
H_0: \vv_i  \leq \vv_j   \;\;\mbox{against }H_1: \vv_j  < \vv_i.
\end{equation}
In particular, from these results, we see that we always take the correct decision $H_1$ with the asymptotic test
for sample sizes above $N=100$.

\begin{table}
\centering
\begin{tabular}{|c|c|c|c|c|c|c|c|c|}
\cline{4-9}
 \multicolumn{1}{c}{}  &  \multicolumn{1}{c}{}   & \multicolumn{1}{c}{}  &\multicolumn{6}{|c|}{Sample size $N$}\\
 \hline
 $H_0$ & $H_1$    & Test type & $20$ &  $50$  &  $10^2$ & $10^3$ & $10^4$ & $10^5$ \\
\hline
$\vv_1 \leq \vv_2$   &   $\vv_1 > \vv_2$ & {\tt{Asymptotic}}  &   0.54   &  0.38    & 0.16   &  0   & 0   &  0\\
\hline
$\vv_1 \leq \vv_2$   &   $\vv_1 > \vv_2$  &{\tt{Nonasymptotic}}  &   1   &  1    &   1 &   1  &1&1 \\
\hline
$\vv_1 \leq \vv_3$ & $\vv_1 > \vv_3$&  {\tt{Asymptotic}} & 0     & 0     &  0  &  0   & 0&0\\
\hline
$\vv_1 \leq \vv_3$   &  $\vv_1 > \vv_3$ & {\tt{Nonasymptotic}} &   1   &   1   & 1   &  1   & 1 &0\\
\hline
$\vv_4 \leq \vv_5$   &   $\vv_4 > \vv_5$ & {\tt{Asymptotic}}  &   0.29   &  0.26    & 0.15   & 0    &0 &0\\
\hline
$\vv_4 \leq \vv_5$   &   $\vv_4 > \vv_5$  &{\tt{Nonasymptotic}}  &  1    & 1     & 1   &  1   &1 &1\\
\hline
$\vv_4 \leq \vv_6$ & $\vv_4 > \vv_6$&  {\tt{Asymptotic}} &    0  &  0    &  0  &  0   & 0&0\\
\hline
$\vv_4 \leq \vv_6$   &  $\vv_4 > \vv_6$ & {\tt{Nonasymptotic}} &    1  &  1    & 1   &  1   & 1 &0\\
\hline
\end{tabular}
\caption{Empirical probabilities of  type II error for  tests of form  \eqref{test5}.}
\label{tabletest9}
\end{table}

\addcontentsline{toc}{section}{References}
\bibliography{Risk_Asym}

\if{

}\fi

\end{document}